\theoremstyle{definition}
\newtheorem{definition}{Definition}[section]
\newtheorem{remark}[definition]{Remark}
\theoremstyle{plain}
\newtheorem{lemma}[definition]{Lemma}
\newtheorem{proposition}[definition]{Proposition}
\newtheorem{theorem}[definition]{Theorem}
\newtheorem{corollary}[definition]{Corollary}
\newfont{\hueca}{msbm10}
\def\hu #1{\hbox{\hueca #1}}\def\hu #1{\hbox{\hueca #1}}
\newcommand{\ad}{{\mathrm{ ad}}}
\newcommand{\ann}{{\mathrm{ Ann }}}
\begin{document}

{\Large Split Regular  $Hom$-Leibniz Color $3$-Algebras
\footnote{The work was supported by
FAPESP 16/16445-0, 17/15437-6;
RFBR 18-31-00001 and
the Presidents Programme Support of Young Russian Scientists (MK-1378.2017.1).}
}

\

\

\medskip

\medskip
\textbf{Ivan Kaygorodov$^{a,}$\footnote{Corresponding Author: kaygorodov.ivan@gmail.com}  \& Yury Popov$^{b}$}
\medskip

\

\

{\tiny
$^{a}$ Universidade Federal do ABC, CMCC, Santo Andr\'{e}, Brazil.

$^{b}$ Universidade Estadual de Campinas, Campinas, Brazil.

\ \smallskip

    E-mail addresses:\smallskip

\

    Ivan Kaygorodov (kaygorodov.ivan@gmail.com),

    \smallskip

    Yury Popov (yuri.ppv@gmail.com).

}

\

\

{\bf Abstract.}
We introduce  {and describe} the class of
split regular $Hom$-Leibniz color $3$-algebras
as the natural extension of the class of
split Lie algebras,
split Lie superalgebras,
split Lie color algebras,
split regular $Hom$-Lie algebras,
split regular $Hom$-Lie superalgebras,
split regular $Hom$-Lie color algebras,
split Leibniz algebras,
split Leibniz superalgebras,
split Leibniz color algebras,
split regular $Hom$-Leibniz algebras,
split regular $Hom$-Leibniz superalgebras,
split regular $Hom$-Leibniz color algebras,
split Lie $3$-algebras,
split Lie $3$-superalgebras,
split Lie color $3$-algebras,
split regular $Hom$-Lie $3$-algebras,
split regular $Hom$-Lie $3$-superalgebras,
split regular $Hom$-Lie color $3$-algebras,
split Lie triple systems,
split Lie triple supersystems,
split Lie triple color systems,
split regular $Hom$-Lie triple systems,
split regular $Hom$-Lie triple supersystems,
split regular $Hom$-Lie triple color systems,
split Leibniz $3$-algebras,
split Leibniz $3$-superalgebras,
split Leibniz color $3$-algebras,
split regular $Hom$-Leibniz $3$-algebras,
split regular $Hom$-Leibniz $3$-superalgebras,
and some other algebras.

More precisely, we show that any of such split
 regular $Hom$-Leibniz  color $3$-algebras $T$ is of the form
  ${T}={\mathcal U} +\sum\limits_{j}I_{j}$, with ${\mathcal U}$
  a subspace of the $0$-root space ${T}_0$, and
   $I_{j}$  {an ideal} of $T$ satisfying {for} $j\neq k:$
\[[{ T},I_j,I_k]+[I_j,{ T},I_k]+[I_j,I_k,T]=0.\]
     {Moreover, if} $T$ is of maximal length, we characterize the simplicity of $T$ in terms of a connectivity property in its set of non-zero roots.

     \medskip

\

{\it Keywords}:
Filippov algebra, Lie triple system, Leibniz $3$-algebra,
$Hom$-algebra, color algebra,  split algebra, root space, simple component, structure theory.

\

{\it 2010 MSC}: 17A60, 17A32, 17A40.


\newpage
\section{Introduction}

$n$-Ary algebras have been applied in mathematical physics,
in the study of the supersymmetry, Bagger-Lambert theory or Nambu mechanics (see, for examples, papers \cite{[1],[2],[3],[4]}).
On the other hand, there is a pure algebraic interest in the study of ternary algebras \cite{ck10,ck14,kg,kps,poj} and, more generally, $n$-ary Leibniz algebras \cite{casas,casas2,poj2} and general $n$-ary algebras \cite{BCKS,kp}.

In this paper we study the class of $Hom$-Leibniz color $3$-algebras (of which the classes of $Hom$-Lie color $3$-algebras and
$Hom$-Lie triple color systems are subclasses) and Leibniz color 3-algebras with an automorphism.

The study of $Hom$-structures began in the paper of Hartwig, Larsson and  Silvestrov \cite{hom}.
The notion of $Hom$-Lie triple systems was introduced in \cite{[20]}. 
In the paper \cite{[28]} Yau gave a general construction method of $Hom$-type algebras starting from usual algebras and a twisting self-map. For information on various types of $Hom$-algebras see \cite{[6], bak, BKP,chen2,[20],chen}.

  In the present paper we study the structure of split regular $Hom$-Leibniz $3$-algebras of arbitrary dimension and over an arbitrary base field ${\hu K}$.
  Split structures appeared first in the classical theory of (finite dimensional) Lie algebras but have been extended to more general settings
  like, for example,
  Leibniz algebras \cite{AJC},
  Poisson algebras,
  Leibniz superalgebras \cite{AJCS1},
  regular $Hom$-Lie algebras \cite{AraCM1},
  regular $Hom$-Lie superalgebras \cite{sphomliesu},
  regular $Hom$-Lie color algebras \cite{Cao3},
  regular $Hom$-Poisson algebras \cite{AraCM2},
  regular $Hom$-Leibniz algebras \cite{Cao4},
  regular $BiHom$-Lie algebras \cite{AJCS2},
  regular $BiHom$-Lie superalgebras \cite{spbihomliesu},
  among many others.
    As for the study of split ternary structures,  {see \cite{AJ2, Twisted, AJF2} for Lie triple systems, twisted inner derivation triple systems, Lie $3$-algebras \cite{AJF2}, Leibniz $3$-algebras \cite{3L} and \cite{Cao1, Cao2} for Leibniz triple systems.}

This paper is organized as follows:
 in  Section 2 we recall the main definitions and results related to the $3$-algebras theory. For $T$ a $Hom$-Leibniz color $3$-algebra or a Leibniz color 3-algebra with an automorphism we construct its multiplication algebra $\mathfrak{L},$ which is a Lie color algebra with an automorphism, and its standard envelope, in terms of which we give the definition of a split $Hom$-Leibniz color $3$-algebra and of a split Leibniz color 3-algebra with an automorphism, and show how the concepts are related.
 In  Section 3 we develop roots connections techniques in this framework and apply it to get certain decompositions theorem for the algebras above. We prove that an arbitrary split regular $Hom$-Leibniz color $3$-algebra $T$ can be decomposed as
${T}={\mathcal U} +\sum\limits_{j}I_{j}$, with ${\mathcal U}$ a subspace of the $0$-root space ${ T}_0$ and $I_{j}$  are ideals of $T$ satisfying
\[[{T},I_j,I_k]+[I_j,{T},I_k]+[I_j,I_k,T] = 0, \, \mbox{for} \, j \neq k.\]
Finally, in  Section 4 we see how the above decomposition of $T$ can be used to obtain a similar decomposition of $\mathfrak{L}.$ 

${\emph Remark.}$
Note that this paper is the first attempt to extend the theory of split algebras to  $n$-ary $Hom$-algebras $(n>2).$ Up until now, only split $Hom$-algebras and split 3-algebras have been studied.

\newpage
\section{Preliminaries}

\subsection{Color algebras}

In this subsection we recall the basic notions related to color algebras and maps on them.

\begin{definition}{
Let $\mathbb{K}$ be a field and $\mathbb{G}$ be an abelian group.
A map $\epsilon: \mathbb{G} \times \mathbb{G} \rightarrow \mathbb{K}^{\times}$ is called a bicharacter on $\mathbb{G}$ if the
following relations hold for all $f,g,h \in \mathbb{G}:$

(1) $\epsilon(f,g+h)=\epsilon(f,g)\epsilon (f,h);$

(2) $\epsilon(g+h,f)=\epsilon(g,f)\epsilon(h,f);$

(3) $\epsilon(g,h)\epsilon(h,g)=1.$}
\end{definition}

\begin{definition}
A $\mathbb{G}$-graded color $n$-ary algebra $T$ is a vector space $T=\bigoplus_{g\in \mathbb{G}} T_g$ with an $n$-linear map
$[\cdot , \ldots, \cdot ]: T \times \ldots \times T \rightarrow T$  satisfying
\[[T_{\theta_1},  \ldots, T_{\theta_n}] \subseteq T_{\theta_1+ \ldots +\theta_n}, \ \theta_i \in \mathbb{G}.\]
\end{definition}

Let $T=\bigoplus_{g\in \mathbb{G}} T_g$ be a color algebra.
An element $x$ is called homogeneous of degree $t \in \mathbb{G}$ if $x \in T_t$. We denote this by $hg(x)=t$.
A $\mathbb{G}$-grading on $T$ induces a $\mathbb{G}$-grading on the space of linear maps on $T:$ a linear map $D$ on $T$ is homogeneous of degree $t$ if $D(T_g) \subseteq T_{g+t}$ for all $g \in \mathbb{G}.$
We denote this by $hg(D)=t$. If $D$ is homogeneous of degree $t=0$ then $D$ is said to be even. From now on, unless stated otherwise, we assume that all elements and maps are homogeneous. Let $\epsilon$ be a bicharacter on $\mathbb{G}$. For two homogeneous elements $a$ and $b$ we set $\epsilon(a,b):=\epsilon(hg(a),hg(b)).$



\subsection{Color $Hom$-algebras.}
In this subsection we consider  $Hom$-algebraic structures that are twisted versions of the original algebraic structures, and linear maps on them.

\begin{definition}
A $Hom$-algebra is an algebra $A$ with a fixed linear map $\phi$. A homomorphism of two $Hom$-algebras $(A,\phi)$ and $(B,\psi)$ is a map $\varphi$ which is an algebra homomorphism between $A$ and $B$ and $\varphi\phi = \psi\varphi.$
\end{definition}

\begin{remark}
The most general definition says that an $n$-ary $Hom$-algebra is an $n$-ary algebra $A$ equipped with a certain family of linear maps $\{\phi_i\}_{i \in I}.$ However, it is hard to work in such a general context. Therefore, in this paper for convenience we will only consider the case where all maps $\phi_i$ are equal to one map $\phi$ which is also a homomorphism of the underlying algebra structure of $A$.
Such algebras are called multiplicative $n$-ary $Hom$-algebras. From now on we will work only with multiplicative $Hom$-algebras (the exact formulations will be given later in the paper). Also from now on we will assume that $\phi$ is an automorphism of $A$. In this case $A$ is called a regular $Hom$-algebra.
\end{remark}

The definitions of color and $Hom$-algebras can be unified into one:
\begin{definition}
A color $Hom$-algebra $A$ is a color algebra $(A,\epsilon)$ with a fixed even linear map $\phi$. A homomorphism of two color $Hom$-algebras $(A,\mathbb{G}, \epsilon_A,\phi)$ and $(B,\mathbb{G},\epsilon_B,\psi)$ is an even map $\varphi: A\to B$ that is an algebra homomorphism between $A$ and $B$ and $\varphi\phi = \psi\varphi.$
\end{definition}




\subsection{Color $Hom$-$\Omega$ algebras.}

In this subsection we give definitions and study the basic properties of $Hom$-Leibniz color (binary and ternary) algebras.

First we discuss briefly the color $Hom$-algebras defined by identities. We note here that given a variety of ($n$-ary) algebras $\Omega$ defined by a family of identities $\{f_i\}_{i \in I}$ we may obtain the identities $\{(f_i)_{co}^{Hom}\}_{i\in I}$ of the corresponding variety of color $Hom$-$\Omega$ algebras by twisting the defining identities by homomorphisms, usually called the twisting maps. The details of this process can be found in the papers \cite{BKP, CM}.

As an example of this process, we give the definitions of $Hom$-Leibniz color (3-)algebras, and their important subclasses.  As usual, let $\mathbb{G}$ be an abelian group and $\epsilon$ be a bicharacter on $\mathbb{G}.$ From now on we only work with regular algebras.

\begin{definition}
\label{colorleibbin}
A regular $Hom$-Leibniz color algebra $(L,[\cdot,\cdot], \epsilon, \phi)$ is a $\mathbb{G}$-graded vector space $L$ with an even multiplication $[\cdot,\cdot]$ and an even automorphism $\phi$ satisfying
\[[\phi(x),[y,z]] =[[x,y],\phi(z)] + \epsilon(x,y)[\phi(y),[x,z]].\]
If additionally the identity
\[[x,y]=-\epsilon(x,y)[y,x]\]
holds in $L$ (that is, the multiplication in $T$ is color-anticommutative), then $L$ is called a regular $Hom$-Lie color algebra.
\end{definition}

\begin{definition}\label{colorleibter}
A regular $Hom$-Leibniz color $3$-algebra $(T,[\cdot,\cdot], \epsilon, \phi)$ is a $\mathbb{G}$-graded vector space $T$ with a bicharacter $\epsilon$, an even trilinear map $[\cdot,\cdot,\cdot]$ and an even automorphism $\phi$ satisfying
\begin{multline}
\label{DI}
[\phi(x_1),\phi(x_2),[y_1,y_2,y_3]]=  \\
[[x_1,x_2,y_1],\phi(y_2),\phi(y_3)] + \epsilon(x_1 + x_2,y_1)[\phi(y_1),[x_1,x_2,y_2],\phi(y_3)] + \epsilon(x_1+x_2,y_1+y_2)[\phi(y_1),\phi(y_2),[x_1,x_2,y_3]].    
\end{multline}

Additionally, if the identities 
\[[x_1,x_2,x_3]=-\epsilon(x_1,x_2)[x_2,x_1,x_3],\ [x_1,x_2,x_3]=-\epsilon(x_2,x_3)[x_1,x_3,x_2]\]
hold in $T$, then $T$ is called a regular $Hom$-Lie color $3$-algebra, and if the identities
\begin{gather*}
[x_1,x_2,x_3]=-\epsilon(x_1,x_2)[x_2,x_1,x_3],\\ \epsilon(x_3,x_1)[x_1,x_2,x_3]+ \epsilon(x_1,x_2)[x_2,x_3,x_1]+  \epsilon(x_2,x_3)[x_3,x_1,x_2]=0
\end{gather*}
hold in $T,$ then $T$ is called a regular $Hom$-Lie triple color system.
\end{definition}


\begin{remark} In particular, if $\mathbb{G} = \mathbb{Z}_2,$ we get the definition of a $Hom$-Leibniz (3-)superalgebra, and if $\mathbb{G}$ is the trivial group $\{1\}$ then we get the definition of a $Hom$-Leibniz ($3$-)algebra.
If $\phi$ is the identity automorphism on $T$ then we get the definition of a Leibniz color $3$-algebra, as the defining identity turns
\begin{equation} \label{DII}
\begin{aligned}
&[x_1,x_2,[y_1,y_2,y_3]]  = \\
&[[x_1,x_2,y_1],y_2,y_3] + \epsilon(x_1 + x_2,y_1)[y_1,[x_1,x_2,y_2],y_3] + \epsilon(x_1+x_2,y_1+y_2)[y_1,y_2,[x_1,x_2,y_3]].
\end{aligned}
\end{equation}
\end{remark}

As an example, let $T$ be any $\mathbb{G}$-graded space. By ${\rm End}(T)$ we denote the set of all linear maps on $T$. One easily checks that ${\rm End}(T)$ with the grading ${\rm End}(T) = \bigoplus_{g \in \mathbb{G}}{\rm End}_g(T)$ endowed with the color bracket 
\begin{equation}
\label{colorcom}
[D_1,D_2] = D_1D_2 - \epsilon(D_1,D_2)D_2D_1,
\end{equation}
is a color Lie algebra.

\begin{definition}\label{cls}
Let $T$ be a  $Hom$-Leibniz color $3$-algebra.
An ideal of  $T$ is a $\phi$-invariant subspace $I$ such that
\[[I,T,T] + [T,I,T] + [T,T,I] \subseteq I.\]
The annihilator of $T$ is the set
\[
\ann (T) = \{ x \in T: [x, T, T] + [T, x, T] + [T, T, x] = 0\}.
\]
It is straightforward to check that $\ann (T)$ is an ideal of $T$.
\end{definition}

\medskip

So, our main objects will be regular $Hom$-Leibniz color $3$-algebras. Any such algebra $T$ is given by the following data: $T = (T,[\cdot,\cdot,\cdot],\epsilon,\phi),$ where $\epsilon$ is a $\mathbb{G}$-bicharacter on $T$ and $\phi$ is an automorphism such that the identity (\ref{DI}) holds.

Consider also the category of Leibniz color 3-algebras with automorphisms. Any such algebra $T$ is given by the following data: $T = (T,[\cdot,\cdot,\cdot],\epsilon,\phi),$ where $\epsilon$ is a $\mathbb{G}$-bicharacter on $T$ such that the identity (\ref{DII}) holds and $\phi$ is an automorphism on $T.$ The morphisms in this category are the maps respecting the algebra structure, grading and commuting with automorphisms.

Note that the second category is not a subset of the first: the defining relations are different! However, these categories are isomorphic. Indeed, given a Leibniz color $3$-algebra with automorphism $T = (T,[\cdot,\cdot,\cdot],\epsilon,\phi),$ consider an algebra $T^{\phi} = (T,[\cdot,\cdot,\cdot]^{\phi},\phi,\epsilon)$, where
\[[x,y,z]^{\phi} = \phi([x,y,z]).\]
One can check that $T^{\phi}$ is a regular $Hom$-Leibniz color $3$-algebra.
Moreover, this construction is universal in the following sense: if $T = (T,[\cdot,\cdot,\cdot],\phi,\epsilon)$ is a regular $Hom$-Leibniz color $3$-algebra, then one easily checks that $T^{\phi^{-1}}=(T,[\cdot,\cdot,\cdot]^{\phi^{-1}},\epsilon)$, where
\[[x,y,z]^{\phi^{-1}} = \phi^{-1}([x,y,z]),\]
is a  Leibniz color $3$-algebra with automorphism $\phi$, and this construction is clearly inverse to the one above.
Hence, the category of regular  $Hom$-Leibniz color $3$-algebras is isomorphic to the category of Leibniz color $3$-algebras with an automorphism. Occasionally, we will refer to the passage $T \to T^{\phi^{-1}}$ as ''dehomification''.

\medskip

The ideal $J$ generated by the set
\[
\big \{ [x,y,z]+\epsilon(x,y)[y,x,z], \  [x,y,z]+\epsilon(y,z)[x,z,y]: x, y, z \in T \big \}
\]
plays an important role in the study of  $Hom$-Leibniz color $3$-algebras, since $T$ is a  $Hom$-Lie color $3$-algebra if and only if $J = \{0\}$.

Following the ideas of Abdykassymova and Dzhumadil'daev \cite{Ab} for Leibniz algebras, and  {of} Cao and Chen \cite{Cao1, Cao2} for Leibniz triple systems, we  {introduce the following notion.}

\begin{definition}\label{Defsimple}
We say that $(T,\phi)$ is a simple $Hom$-Leibniz color $3$-algebra if its triple product is nonzero and its only $\phi$-invariant ideals are $\{0\}$, $J$ and $T$.
\end{definition}
Note that this definition  {is consistent} with the notion of a simple $Hom$-Lie $3$-algebra since $J = \{0\}$ in that case.

\subsection{ { Operator algebras}}

In this subsection, inspired by the ideas of the paper \cite{vand}, we associate with a $Hom$-Leibniz color $3$-algebra $T$ a certain  Lie color algebra $\mathfrak{L}$ of ''derivations'' of $T,$ and analogously for a Leibniz color 3-algebra with an automorphism. This construction will be important in the following discussion.

Let $T = (T,[\cdot,\cdot,\cdot],\epsilon,\phi)$ be a  $Hom$-Leibniz color $3$-algebra.
Consider the space $\mathfrak{L} = {\rm span}\{\ad(x,y): x, y \in T\},$ where $\ad (x,y)(z):=[x,y,z]$ and let $\ell \in \mathfrak{L}.$ Then the identity (\ref{DI}) implies that $\ell$ satisfies the following relation:
\begin{equation}
\label{hom_der}
(\ell\circ\phi^{-1})[y_1,y_2,y_3] = [(\phi^{-1}\circ\ell) y_1,y_2,y_3] + \epsilon(\ell,y_1)[y_1,(\phi^{-1}\circ\ell)y_2,y_3] + \epsilon(\ell,y_1+y_2)[y_1,y_2,(\phi^{-1}\circ\ell)y_3].
\end{equation}

Consider the algebra End$(T)$ with the multiplication
\[[D_1,D_2]_{\phi^{-1}} = D_1\phi^{-1}D_2 - \epsilon(D_1,D_2)D_2\phi^{-1}D_1.\]
This algebra is called the $\phi^{-1}$-homotope of ${\rm End}(T)$ and is in fact isomorphic to End$(T)$ by the mapping $\varphi$ sending $\ell \in $ End$(T)$ to $\phi^{-1}\ell.$ In particular, End$(T)_{\phi^{-1}}$  is a Lie color algebra.

Note then that the relation (\ref{hom_der}) implies that $\mathfrak{L}$ is a subalgebra of the homotope algebra End$(T)_{\phi^{-1}}.$ In particular, the multiplication in $\mathfrak{L}$ is given by
\begin{equation}
\label{AL1}
\big [\ad (a,b), \,\ad (c,d) \big]=
\ad \big([a,b,c]^{\phi^{-1}},d \big)+\epsilon(a+b,c) \ad \big (c,[a,b,d]^{\phi^{-1}} \big)
\end{equation}

Note that the conjugation by $\phi$ remains an automorphism in the End$(T)_{\phi^{-1}}:$
\[ [\phi D_1 \phi^{-1}, \phi D_2 \phi^{-1}]_{\phi^{-1}} = \phi[D_1,D_2]_{\phi^{-1}}\phi^{-1}.\]
Since $\phi$ is an automorphism of $T,$ one can see that the algebra $\mathfrak{L}$ is invariant under this automorphism:
\[ \phi\ad(x,y)\phi^{-1} = \ad(\phi(x),\phi(y)).\]
The algebra $\mathfrak{L}$ also has a $\mathbb{G}$-grading induced by $\mathbb{G}$-grading of $T.$ Hence, $(\mathfrak{L},[\cdot,\cdot]_{\phi^{-1}},\epsilon,\phi\cdot\phi^{-1})$ is a Lie color algebra with an automorphism. This algebra is called the multiplication algebra of $T.$

Note that since every element of ${\mathfrak{L}}$ is of the form $\sum
\ad(x_i, y_i)$, we have
\begin{equation}\label{AL2}
[\ell, \ad(a,b)]= \ad (\phi^{-1}\ell a,b) + \epsilon(\ell,a)\ad (a, \phi^{-1}\ell b)
\end{equation}
for any $a,b \in T$ and $\ell \in {\mathfrak L}$.

\medskip

Now we consider an analogous construction for $T = (T,[\cdot,\cdot,\cdot],\epsilon,\phi)$ a Leibniz color $3$-algebra with automorphism. The difference here is that the construction almost does not depend on the automorphism $\phi,$ so we may suppose that we are doing it for a $Hom$-Leibniz color $3$-algebra with a trivial automorphism $(T,[\cdot,\cdot,\cdot],\epsilon,id),$ thus, it is a particular case of the construction above. The only part of the new construction in which the automorphism $\phi$ participates explicitly is the construction of the automorphism on $\mathfrak{L}.$

Recall that a linear mapping $D$ on $T$ is called a derivation if it satisfies
\[ D[a,b,c] = [D(a),b,c] + \epsilon(D,a)[a,D(b),c] + \epsilon(D,a+b)[a,b,D(c)] \]
for all $a,b,c \in T.$
The identity (\ref{DII}) then says that the {left multiplication operators} $\ad (x,y), x, y \in T$, are derivations.

Moreover, in this case the homotopy above is trivial and the bracket in $\mathfrak{L}$ coincides with the bracket (\ref{colorcom}):
\begin{equation} \label{AL1_color}
\big [\ad (a,b), \,\ad (c,d) \big]= \ad(a,b)\circ\ad(c,d) - \epsilon(a+b,c+d)\ad(c,d) \circ \ad(a,b)=
\end{equation}
\begin{equation*}
\ad \big([a,b,c],d \big)+\epsilon(a+b,c) \ad \big (c,[a,b,d] \big).
\end{equation*}
Analogously to the construction above, $\mathfrak{L}$ has a $\mathbb{G}$-grading induced by $\mathbb{G}$-grading of $T$ and the conjugation by $\phi$ induces an automorphism in $\mathfrak{L}.$
Thus, in this case $(\mathfrak{L},[\cdot,\cdot],\epsilon,\phi\cdot\phi^{-1})$ is a Lie color algebra with an automorphism.
Yet again we call it the multiplication algebra of $T.$

In this case the identity (\ref{AL2}) takes the following simplified form:
\begin{equation}\label{AL2_color}
[\ell, \ad(a,b)]= \ad (\ell a,b) + \epsilon(\ell,a)\ad (a, \ell b)
\end{equation}
for any $a,b \in T$ and $\ell \in {\mathfrak L}$.

\subsection{The standard embedding}
This subsection continues the previous one and is inspired by the paper \cite{vand}. Here, for $T$ a regular $Hom$-Leibniz color $3$-algebra or a Leibniz color $3$-algebra with automorphism we extend the algebra $\mathfrak{L}$ constructed above by the space isomorphic to $T$ and obtain a larger color (but not necessary color Lie) $2$-graded algebra $\mathcal{A}$ containing $T$ in a certain sense. This algebra is called the standard envelope of $T$ and plays a crucial role in our discussion. Finally, we prove that the standard envelopes of the algebras $T$ and its ''dehomification'' $T^{\phi^{-1}}$ are isomorphic, explicitly constructing an isomorphism between them.

Recall that an algebra $A$ is said to be  $2$-graded if there exist two linear subspaces $A^{0}$ and $A^{1}$ of $A$,
called the even and the odd part respectively, such that $A = A^{0}\oplus A^{1}$ and $A^{\alpha }A^{\beta }\subset A^{\alpha +\beta }$ for every $\alpha, \beta \in {\hu Z}_{2}$.
\begin{definition}\label{defembed}
The {\bf standard embedding} of a regular $Hom$-Leibniz color $3$-algebra $(T,[\cdot,\cdot,\cdot],\epsilon,\phi)$ is a color
$2$-graded algebra with an automorphism $({\mathcal A},\epsilon,\Phi)$, where ${\mathcal A}^{0}:= {\mathfrak L}$, ${\mathcal A}^{1}:= T$, the product is given by
\begin{equation}
\label{main}
(\ad(x,y),z) \cdot (\ad(u,v),w) =
\end{equation}
\begin{equation*}
(\ad( [x,y,u]^{\phi^{-1}},v)  + \epsilon(x+y,u)\ad(u, [x,y,v]^{\phi^{-1}})+ \ad(z,w) \ , \    [x,y,w]^{\phi^{-1}}-\epsilon(z, u+v)[u,v,z]^{\phi^{-1}})
\end{equation*}
\end{definition}
the automorphism $\Phi$ by
\begin{equation}
\Phi:
\begin{cases}
x &\mapsto \phi(x),\\
\ad(x,y) &\mapsto \phi\ad(x,y)\phi^{-1} = \ad(\phi(x),\phi(y)),
\end{cases}
\text{ for } x, y \in T,
\end{equation}
and the $\mathbb{G}$-grading is induced by $\mathbb{G}$-gradings of $T$ and $\mathfrak{L}.$

It can be easily verified that the map $\Phi$ is indeed an automorphism of $\mathcal{A}.$

The name ''embedding'' is justified in the following sense: introduce a structure of a Leibniz color 3-algerba in $\mathcal{A}$ by $[x,y,z] = [[x,y],z].$ Consider then a regular $Hom$-Leibniz color $3$-algebra $\mathcal{A}^{\Phi}.$ One can then easily see that the subspace $T \subset A^{\Phi}$ with the induced multiplication, automorphism and $\mathbb{G}$-grading is exactly the algebra $(T,[\cdot,\cdot,\cdot],\epsilon,\phi)$ (for details, see paper \cite{vand}).

\begin{definition}Let now $(T,[\cdot,\cdot,\cdot],\epsilon,\phi)$ be a Leibniz color $3$-algebra with automorphism $\phi.$ Then the standard embedding of $T$ is a color $2$-graded algebra with an automorphism $({\mathcal A},\epsilon,\Phi)$, where ${\mathcal A}^{0}:= {\mathfrak L}$, ${\mathcal A}^{1}:= T$, the product is given by
\begin{equation} \label{mainsimple}
\begin{aligned}  (\ad(x,y),z) \ \cdot \ &(\ad(u,v),w) := \\
& (\ad([x,y,u],v)  + \epsilon(x+y,u) \ad(u,[x,y,v]) + \ad(z,w) \ , \ [x,y,w]- \epsilon(z, u+v)[u,v,z]).\end{aligned}
\end{equation}
and automorphism $\Phi$ and $\mathbb{G}$-grading given as in the previous definition.
\end{definition}

\begin{remark}
Although in both cases ${\mathcal A^0}$ is a  Lie color algebra,
${\mathcal A}$ is not, in general, a ($2$-graded)  Lie color algebra.
However, analogously to the paper \cite{vand} one can prove that if $T$ is a $Hom$-Lie triple color system,
then $\mathcal{A}$ is a Lie  color algebra.
\end{remark}

We will consider the relation between the standard embeddings of a $Hom$-Leibniz  color $3$-algebra $(T,[\cdot,\cdot,\cdot],\phi,\epsilon)$ and its ''dehomification'' $T^{\phi^{-1}}$ defined above. Note first that by the definition of the algebra $T^{\phi^{-1}}$ we have
\begin{equation}
\label{ad_relation}
\ad_{T^{\phi^{-1}}}(x,y) = \phi^{-1}\ad_T(x,y),
\end{equation}
where $x, y \in T$ and $\ad_T(x,y), \  \ad_{T^{\phi^{-1}}}(x,y)$ are the left multiplication operators in the corresponding algebras. Thus, we have
\[\mathfrak{L}(T^{\phi^{-1}}) = \phi^{-1}\mathfrak{L}(T)\]
and this allows us to define us a linear map (recall the isomorphism between End$(T)$ and its homotope End$(T)_{\phi^{-1}}$ of the previous subsection) $\varphi: \mathcal{A}(T) \to \mathcal{A}(T^{\phi^{-1}})$ by
\[\varphi:
\begin{cases}
x &\mapsto  x,\\
\ad_T(x,y) & \mapsto \ad_{T^{\phi^{-1}}}(x,y) = \phi^{-1}\ad_T(x,y)
\end{cases}
\text{ for } x, y \in T.\]
\begin{proposition}
\label{emb_iso}
The map $\varphi$ is an isomorphism of color 2-graded algebras with automorphism.
\end{proposition}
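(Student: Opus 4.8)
The plan is to verify, in turn, the three conditions defining an isomorphism of color $2$-graded algebras with automorphism: that $\varphi$ is a grading-preserving linear bijection, that it is multiplicative, and that it intertwines the two copies of $\Phi$. I would first record that $\varphi$ splits as the direct sum of its restrictions to the homogeneous components $\mathcal{A}^0=\mathfrak{L}$ and $\mathcal{A}^1=T$. On $\mathcal{A}^1$ it is the identity, and on $\mathcal{A}^0$ it is the map $\ell\mapsto\phi^{-1}\ell$, which is exactly the restriction to $\mathfrak{L}(T)$ of the isomorphism between $\mathrm{End}(T)$ and its $\phi^{-1}$-homotope $\mathrm{End}(T)_{\phi^{-1}}$ recalled earlier. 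Since $\phi^{-1}$ is invertible and even, and since $\mathfrak{L}(T^{\phi^{-1}})=\phi^{-1}\mathfrak{L}(T)$ by (\ref{ad_relation}), this restriction is a bijection $\mathfrak{L}(T)\to\mathfrak{L}(T^{\phi^{-1}})$ preserving the $\mathbb{G}$-degree (note $hg(\ad(x,y))=hg(x)+hg(y)$ is unchanged by the even map $\phi^{-1}$). Hence $\varphi$ is a $\mathbb{Z}_2$- and $\mathbb{G}$-grading-preserving linear bijection.

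The multiplicativity is the heart of the matter, and the observation that makes it routine is that the triple product of the dehomification is, by its very definition, $[a,b,c]_{T^{\phi^{-1}}}=[a,b,c]^{\phi^{-1}}$, while $\ad_{T^{\phi^{-1}}}(a,b)=\phi^{-1}\ad_T(a,b)$ by (\ref{ad_relation}). I would therefore compute the product $(\ad_T(x,y),z)\cdot(\ad_T(u,v),w)$ in $\mathcal{A}(T)$ using (\ref{main}), apply $\varphi$ componentwise (identity on the $T$-slot, $\ell\mapsto\phi^{-1}\ell$ on the $\mathfrak{L}$-slot), and compare with the product $(\ad_{T^{\phi^{-1}}}(x,y),z)\cdot(\ad_{T^{\phi^{-1}}}(u,v),w)$ computed in $\mathcal{A}(T^{\phi^{-1}})$ via (\ref{mainsimple}). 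Every term matches on the nose: the odd slots coincide because $[x,y,w]^{\phi^{-1}}$ and $-\epsilon(z,u+v)[u,v,z]^{\phi^{-1}}$ are literally the triple products of $T^{\phi^{-1}}$, and the even slots coincide because applying $\phi^{-1}$ to each $\ad_T(\cdots)$ turns it into the corresponding $\ad_{T^{\phi^{-1}}}(\cdots)$ with the same arguments $[x,y,u]^{\phi^{-1}}$, $[x,y,v]^{\phi^{-1}}$, $z$, $w$ and the same bicharacter coefficients (which depend only on $\mathbb{G}$-degrees, preserved by $\varphi$). Equivalently, on the even–even block one may simply invoke that $\ell\mapsto\phi^{-1}\ell$ already carries the homotope bracket (\ref{AL1}) to the commutator (\ref{AL1_color}).

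Finally I would check $\varphi\circ\Phi=\Phi\circ\varphi$ on each graded piece. On $T$ both sides send $x\mapsto\phi(x)$. On $\mathfrak{L}$, the left-hand side gives $\varphi(\ad_T(\phi(x),\phi(y)))=\phi^{-1}\ad_T(\phi(x),\phi(y))$, while the right-hand side gives $\Phi(\ad_{T^{\phi^{-1}}}(x,y))=\ad_{T^{\phi^{-1}}}(\phi(x),\phi(y))=\phi^{-1}\ad_T(\phi(x),\phi(y))$, using that $\Phi$ acts as $\ad(a,b)\mapsto\ad(\phi(a),\phi(b))$ in both embeddings. I do not anticipate a genuine obstacle here; the only point requiring real care is the bookkeeping of the $\phi^{-1}$-twists and the $\epsilon$-coefficients, and in particular the recognition that the Hom-brackets appearing in (\ref{main}) are precisely the (untwisted) brackets of $T^{\phi^{-1}}$ appearing in (\ref{mainsimple}), so that no stray powers of $\phi$ survive the comparison.
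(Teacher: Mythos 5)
Your proposal is correct and follows essentially the same route as the paper's proof: verify bijectivity and grading-preservation by construction, check multiplicativity by direct computation using the identities $\ad_{T^{\phi^{-1}}}(x,y)=\phi^{-1}\ad_T(x,y)$ and $[x,y,z]_{T^{\phi^{-1}}}=[x,y,z]^{\phi^{-1}}$ to match the products (\ref{main}) and (\ref{mainsimple}) term by term, and then verify the intertwining of the automorphisms on each graded piece. The only cosmetic difference is that you phrase the even--even block alternatively via the homotope isomorphism, which the paper only implicitly relies on.
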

\begin{proof} By construction, $\varphi$ is bijective and respects $\mathbb{G}$-grading and 2-grading. So we only need to verify that $\varphi$ respects multiplication and commutes with automorphisms, which is done by straightforward checking. Let $x, y, z, w \in T.$ Using the definitions of multiplications in the algebras $\mathcal{A}(T)$ and $\mathcal{A}(T^{\phi^{-1}}),$ we have
\[\varphi(\ad_T(x,y))\cdot_{\mathcal{A}(T^{\phi^{-1}})}\varphi(\ad_T(z,w)) = \ad_{T^{\phi^{-1}}}(x,y)\cdot_{\mathcal{A}(T^{\phi^{-1}})}\\\ad_{T^{\phi^{-1}}}(z,w) = \]
\[\ad_{T^{\phi^{-1}}}([x,y,z]_{T^{\phi^{-1}}},w) + \epsilon(x+y,z)\ad_{T^{\phi^{-1}}}(z,[x,y,w])_{T^{\phi^{-1}}}=\]
\[\phi^{-1}(\ad_T([x,y,z]_T^{\phi^{-1}},w) + \epsilon(x+y,z)\ad_T(z,[x,y,w]_T^{\phi^{-1}}))=\]
\[\varphi(\ad_T(x,y)\cdot_{\mathcal{A}(T)}\ad_T(z,w)),\]
\[\varphi(z)\cdot_{\mathcal{A}(T^{\phi^{-1}})}\varphi(w) = z\cdot_{\mathcal{A}(T^{\phi^{-1}})}w = \ad_{T^{\phi^{-1}}}(z,w) = \]
\[\varphi(\ad_T(z,w)) = \varphi(z\cdot_{\mathcal{A}(T)}w),\]
\[\varphi(\ad_T(x,y))\cdot_{\mathcal{A}(T^{\phi^{-1}})}\varphi(w) = \ad_{T^{\phi^{-1}}}(x,y)\cdot_{\mathcal{A}(T^{\phi^{-1}})}w=\]
\[[x,y,w]_{T^{\phi^{-1}}} = [x,y,w]_T^{\phi^{-1}} = \varphi(\ad_T(x,y)\cdot_{\mathcal{A}(T)}w).\]
Completely analogously one can check that that $\varphi$ preserves the products of the type $z\cdot_{\mathcal{A}(T)}\ad_T(x,y).$

Now we check that $\varphi$ commutes with automorphisms. Denote the automorphisms of $\mathcal{A}(T)$ and $\mathcal{A}(T^{\phi^{-1}})$ respectively by $\Phi$ and $\Phi'.$ Then
\[\varphi(\Phi(x)) = \phi(x) = \Phi'(\varphi(x)),\]
\[\varphi(\Phi(\ad_T(x,y))) = \varphi(\ad_T(\phi(x),\phi(y))) = \ad_{T^{\phi^{-1}}}(\phi(x),\phi(y))=\]
\[\Phi'(\ad_{T^{\phi^{-1}}}(x,y)) = \Phi'(\varphi(\ad_T(x,y))).\]
\end{proof}

\medskip

\subsection{Split structures}
In this subsection we introduce the class of split algebras in the framework of regular $Hom$-Leibniz color $3$-algebras and Leibniz color $3$-algebras with automorphism. As in the previous papers on the subject (see \cite{AJ2, AJF2} and other papers on ternary split algebras) this is done by using the standard embedding algebra.

As in the previous section, let $T$ be a regular $Hom$-Leibniz color $3$-algebra (or a Leibniz color $3$-algebra with an automorphism) and $\mathcal{A}$ its standard embedding. Observe that the product in $\mathcal{A}$
gives us a natural action:
\[\mathcal{A}^0 \times \mathcal{A}^1 \to \mathcal{A}^1\]
\[(x,y) \mapsto xy.\]

As we have seen earlier, $\mathcal{A}^0$ is a Lie color algebra. Moreover, the identity (\ref{DI}) shows that that this action endows ${\mathcal A^1}$  with a structure of a color Lie module over ${\mathcal A^0}.$

\begin{definition}
\label{root_spaces}
Let $T = (T,[\cdot,\cdot,\cdot],\epsilon,\phi)$ be a  $Hom$-Leibniz color $3$-algebra (or a Leibniz color 3-algebra with an automorphism) and let ${\mathcal A} = ({\mathfrak{L}}\oplus T,\cdot,\epsilon,\Phi)$ be its standard embedding. Let $H$ be a maximal abelian subalgebra (shortly MASA) of $\mathfrak{L}_0,$ the zeroth $\mathbb G$-component of $\mathfrak{L}.$  The root space of $T$ with respect to $H$ associated to a linear functional $\alpha \in H^{*}$ is the subspace
\[T_{\alpha }:= \{v\in { T}:h\cdot v = \alpha(h) v \, \mbox{ for any } \, h\in H\}.\]
The elements $\alpha \in H^{*}$ such that $T_{\alpha }\neq 0$ are called roots of $T$ (with respect to $H$), and we
 {write} $\Lambda^T:=\{\alpha \in H^{*}\backslash \{0\}: T_{\alpha }\neq 0\}$.
Analogously, by $\Lambda^{{\mathfrak{L}}}$ we denote the set of all nonzero $\alpha \in H^{*}$ such that ${\mathfrak{L}}_{\alpha } \neq 0$, where
\[{\mathfrak{L}}_{\alpha }:=\{e\in {\mathfrak{L}}:[h,e]=\alpha (h)e \, \mbox{ for any } \, h\in H\}\]
are root subspaces of $\mathfrak{L}$ with respect to $T.$
\end{definition}
\begin{remark}Note that despite the fact that we work in the context of $Hom$-algebras, our definition of the root spaces $T_{\alpha}$ does not contain explicitly the homomorphism $\phi$ (cf. \cite{Cao3,Cao4}). This is because we define the root spaces via the multiplication in the algebra $\mathcal{A}(T),$ which is {\bf not} a $Hom$-algebra (but an algebra with an automorphism). In fact, the definition of the product in the standard envelope $\mathcal{A}$ already contains the operator $\phi^{-1},$ which ''cancels out'' with the operator $\phi$ which should be present in the ''usual'' definition of a split $Hom$-algebra (i.e. $\ad(x,y)\cdot w = [x,y,w]^{\phi^{-1}} = \phi^{-1}([x,y,w])$). The material of the next subsection should clarify and justify this idea.
\end{remark}

It is an easy exercise to prove that the root spaces $T_\alpha, \alpha \in \Lambda^T$ and $L_\beta, \beta \in \Lambda^{\mathfrak L}$ are graded (see, for example, \cite{Cao3}). 

The following result collects some basic properties of the
subspaces $T_{\alpha}$ and ${\mathfrak L}_{\alpha}$. The proof is based {on} the Jacobi identity (which holds in  ${\mathfrak{L}}$) and  {on} the identity (\ref{DI}).
{We omit the details here since it is similar to the proof of \cite[Lemma
2.1]{AJF2}}.

\begin{lemma} \label{lema0}
Let  $(T,[\cdot,\cdot,\cdot],\phi,\epsilon)$ be a Leibniz color 3-algebra with automorphism, $\mathcal{A}=\mathfrak L\oplus T$ its standard embedding and $H$ a MASA of ${\mathfrak L}_0$. If $\alpha, \beta ,\gamma \in {\Lambda^T} \cup \{0\}$ and $\delta, \epsilon \in \Lambda^{\mathfrak L} \cup
\{0\}$, the following assertions hold.
\begin{enumerate}
\item[\rm (i)] If $\ad (T_{\alpha }, T_{\beta })\neq 0$
then $\alpha +\beta \in \Lambda^{\mathfrak L} \cup \{0\}$, and
$\ad(T_{\alpha}, T_{\beta})\subset {\mathfrak L}_{\alpha +\beta}.$
\medskip
\item[\rm (ii)]  If ${\mathfrak L}_{\delta } T_{\alpha }\neq 0$ then
$\delta +\alpha \in {\Lambda^T} \cup \{0\}$ and $ {\mathfrak{L}}_{\delta }T_{\alpha }\subset T_{\delta +\alpha }.$
\medskip
\item[\rm (iii)]  If $T_{\alpha} {\mathfrak L}_{\delta} \neq 0$ then $\alpha  + \delta \in {\Lambda^T} \cup \{0\}$ and $T_{\alpha} {\mathfrak L}_{\delta
}\subset T_{\alpha + \delta}.$
\medskip
\item[\rm (iv)] If $[{\mathfrak L}_{\delta } ,{\mathfrak L}_{\epsilon }] \neq 0$ then $\delta
+\epsilon \in \Lambda^{\mathfrak L} \cup \{0\}$ and $ [{\mathfrak
L}_{\delta }, {\mathfrak L}_{\epsilon }] \subset {\mathfrak L}_{\delta +\epsilon }.$
\medskip
\item[\rm (v)] If $[T_{\alpha}, T_{\beta}, T_{\gamma}] \neq 0$ then $\alpha +\beta +\gamma \in
{\Lambda^T} \cup \{0\}$  and $[T_{\alpha}, T_{\beta}, T_{\gamma}] \subset T_{\alpha +\beta +\gamma}.$
\medskip
\item[\rm (vi)] $\phi(T_\alpha) \subseteq T_{\alpha\Phi^{-1}}, \phi^{-1}(T_\alpha) \subseteq T_{\alpha\Phi}$ and $\alpha\Phi^k \in \Lambda^T$ for all $k \in \mathbb{Z}.$
\medskip
\item[\rm (vii)] $\Phi(\mathfrak{L}_\delta) \subseteq \mathfrak{L}_{\delta\Phi^{-1}}, \Phi^{-1}(\mathfrak{L}_\delta) \subseteq \mathfrak{L}_{\delta\Phi}$ and $\delta\Phi^k \in \Lambda^{\mathfrak{L}}$ for all $k \in \mathbb{Z}.$
\end{enumerate}
\end{lemma}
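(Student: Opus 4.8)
The plan is to treat parts (i)--(v) uniformly as additivity of weights, and to obtain (vi)--(vii) from the equivariance of the automorphism. The common mechanism behind (i)--(v) is the following: choose homogeneous elements in the relevant root spaces, act on their product by an arbitrary $h \in H$, and rewrite the result by the appropriate Leibniz-type rule. Since $H \subseteq \mathfrak{L}_0$, every $h$ is $\mathbb{G}$-homogeneous of degree $0$, so each bicharacter factor of the shape $\epsilon(h,\cdot)=\epsilon(0,\cdot)$ equals $1$ and drops out; this is why the color signs never actually intervene in these computations. In every case, once the product is shown to be an eigenvector for all $h\in H$ with eigenvalue a given sum of roots, its non-vanishing places that sum in $\Lambda^{\mathfrak L}\cup\{0\}$ or $\Lambda^T\cup\{0\}$ by definition, and the stated inclusion of subspaces follows.

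I would carry the parts out in the order (iv), (i), (ii), (iii), (v), since the later ones reduce to the earlier. For (iv), applying the color Jacobi identity in $\mathfrak L$ to $[h,[e,f]]$ with $e\in\mathfrak L_\delta$, $f\in\mathfrak L_\epsilon$ and $\epsilon(h,e)=1$ gives $[h,[e,f]]=[[h,e],f]+[e,[h,f]]=(\delta+\epsilon)(h)[e,f]$. For (i), the identity (\ref{AL2_color}) with $\ell=h$ yields $[h,\ad(a,b)]=\ad(ha,b)+\ad(a,hb)=(\alpha+\beta)(h)\ad(a,b)$ for $a\in T_\alpha$, $b\in T_\beta$. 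For (ii), the color-module axiom $h\cdot(e\cdot v)=[h,e]\cdot v+\epsilon(h,e)\,e\cdot(h\cdot v)$ (recall that $\mathcal A^1=T$ is a color Lie module over $\mathcal A^0=\mathfrak L$) collapses, again using $\epsilon(h,e)=1$, to $h\cdot(e\cdot v)=(\delta+\alpha)(h)(e\cdot v)$ for $e\in\mathfrak L_\delta$, $v\in T_\alpha$. Part (iii) then reduces to (ii): the explicit product (\ref{mainsimple}) in $\mathcal A$ gives, for homogeneous $v\in T_\alpha$ and $e\in\mathfrak L_\delta$, the relation $v\cdot e=-\epsilon(v,e)\,(e\cdot v)$, a scalar multiple of $e\cdot v\in T_{\delta+\alpha}$, so $v\cdot e\in T_{\alpha+\delta}$. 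Finally (v) follows from (i) and (ii) together, since $[a,b,c]=\ad(a,b)\cdot c$ with $\ad(a,b)\in\mathfrak L_{\alpha+\beta}$ acting on $c\in T_\gamma$.

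For (vi) and (vii), I would use that $\Phi$ is an automorphism of $\mathcal A$ restricting to $\phi$ on $T$ and to conjugation by $\phi$ on $\mathfrak L$, together with the $\Phi$-stability of the MASA $H$ (so that $\Phi^{-1}(h)\in H$ for every $h\in H$ and $\alpha\Phi^{\pm1}$ is a well-defined functional on $H$). For $v\in T_\alpha$ and $h\in H$, equivariance gives $h\cdot\phi(v)=\Phi(\Phi^{-1}(h)\cdot v)=\alpha(\Phi^{-1}(h))\,\phi(v)=(\alpha\Phi^{-1})(h)\,\phi(v)$, hence $\phi(T_\alpha)\subseteq T_{\alpha\Phi^{-1}}$; the claim for $\phi^{-1}$ is symmetric (replacing $\Phi$ by $\Phi^{-1}$), and iterating---using that $\phi$ is bijective, so $T_\alpha\neq 0$ forces $T_{\alpha\Phi^k}\neq 0$---yields $\alpha\Phi^k\in\Lambda^T$ for all $k\in\mathbb Z$. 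Part (vii) is the identical argument with the module action $h\cdot(-)$ replaced by the bracket $[h,-]$ in the Lie color algebra $\mathfrak L$.

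The computations in (i)--(v) are purely formal, so the only points needing attention are organisational. The main one is in (vi)--(vii): the argument genuinely needs the chosen MASA $H$ to be $\Phi$-stable, which is what makes $\Phi^{-1}(h)$ land back in $H$ and $\alpha\Phi^{-1}$ well defined; this stability is part of the split-structure setup rather than something proved here. The only other nuisance is the clash between the bicharacter $\epsilon$ and the root $\epsilon$ in (iv), but it is purely cosmetic, since every bicharacter factor carrying the degree-$0$ element $h$ is trivial and so disappears from the final identity.
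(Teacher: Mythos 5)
Your proof is correct and takes essentially the route the paper indicates for its (omitted) proof: the paper states that the argument rests on the Jacobi identity in $\mathfrak{L}$ and on the defining identity of the algebra, which is exactly your mechanism of weight-additivity via the bracket of $\mathfrak{L}$, the $\mathfrak{L}$-module action on $T$, the explicit product in $\mathcal{A}$, and $\Phi$-equivariance for (vi)--(vii). Your remark that the MASA $H$ must be $\Phi$-invariant is well taken --- the paper never states this hypothesis, but it is implicitly required even for the functionals $\alpha\Phi^k$ appearing in (vi)--(vii) to be well defined, so treating it as part of the split-structure setup is the correct reading.
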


\begin{remark}
Note that the verbatim analogue of the above lemma can be proved for $T$ a regular $Hom$-Leibniz color $3$-algebra, except that in the point (v) we have $[T_{\alpha}, T_{\beta}, T_{\gamma}]^{\phi^{-1}} \subset T_{\alpha +\beta +\gamma}$ instead.
\end{remark}
Now we can give the main definition of the paper.

\begin{definition}
Let $T$ be a $Hom$-Leibniz color $3$-algebra or a Leibniz color 3-algebra with an automorphism and $\mathcal{A} = \mathfrak{L}\oplus T$ its standard embedding. Then $T$ is said to be a split algebra if there exists a MASA $H$ of ${\mathfrak{L}_0}$ such that
\begin{equation} \label{rootdeco}
T = T_{0}\oplus \left (\bigoplus _{\alpha \in \Lambda^T}T_{\alpha
} \right).
\end{equation}
in the sense of the definition \ref{root_spaces}. The set $\Lambda^T$ is called the  root system of $T$. We refer to the decomposition \eqref{rootdeco} as the root spaces decomposition of $T$.
\end{definition}

\begin{remark}\label{r1}
Note that, in contrast to the previous papers considering split ternary algebras, in the color case we cannot guarantee that $[T_0,T_0,T_0] = 0,$ since $\ad(T_0,T_0) \not\subseteq H.$ 
\end{remark}


\subsection{Reduction to the color case}
In this subsection we see that for a split regular $Hom$-Leibniz color $3$-algebra $T$, the algebra $T^{\phi^{-1}}$ is also split, and vice versa. Therefore, we may consider only the case of Leibniz color $3$-algebras with an automorphism.

Let $(T,[\cdot,\cdot,\cdot],\phi,\epsilon)$ be a regular $Hom$-Leibniz color $3$-algebra, and let $H$ be a MASA of $\mathfrak{L}, \alpha \in \Lambda, v \in T_{\alpha}.$ Consider a Leibniz color $3$-algebra $T^{\phi^{-1}}.$ Recall that the algebras $\mathcal{A}(T)$ and $\mathcal{A}(T^{\phi^{-1}})$ are isomorphic (via the map $\varphi$) as 2-graded color algebras with an automorphism. Hence, the subalgebra $\varphi(H) = \phi^{-1}H$ is a MASA of $\mathfrak{L}(T^{\phi^{-1}}).$ Now, $h = \varphi(h') \in \varphi(H)$ and since $\varphi$ is an algebra isomorphism, we get
\[h\cdot_{\mathcal{A}(T^{\phi^{-1}})}v = \varphi(h')\cdot_{\mathcal{A}(T^{\phi^{-1}})}\varphi(v) = \varphi(h'\cdot_{\mathcal{A}(T)}v) = \varphi(\alpha(h')v)=\alpha(h')v = \alpha(\varphi^{-1}(h))v = \alpha(\phi h)v.\]
This clearly implies that
\begin{equation}
\label{dehom_root_spaces}
T_{\alpha} = (T^{\phi^{-1}})_{\alpha\circ\varphi^{-1}},
\end{equation}
and if $T$ is a split  $Hom$ Leibniz color 3-algebra (\ref{rootdeco}), then we have
\[T^{\phi^{-1}} = T_{0}\oplus \big (\bigoplus _{\alpha \in \Lambda^T}T_{\alpha\circ\varphi^{-1}} \big).\]
Therefore, $T^{\phi^{-1}}$ is a split Leibniz color $3$-algebra with the root system $\Lambda^{T^{\phi^{-1}}} = \Lambda^T\circ\varphi^{-1}.$ Clearly, the converse statement also holds.

Hence, we may reduce our considerations and obtain our results only in the case of split  Leibniz color $3$-algebras. Having obtained our main results for the algebra $T^{\phi^{-1}}$, we can go back to the $Hom$ case by reversing the process above and obtain the analogous results. This will be done explicitly in the subsequent sections.

\section{Split Leibniz color $3$-algebras}

\subsection{The spaces associated to roots}

In this supplementary subsection for any roots $\alpha, \beta \in \Lambda^T$ we introduce certain subspaces $T_{0,\alpha}$ and $T(\alpha,\beta)$ which are going to be important later in the discussion.

Let $\alpha \in \Lambda^T$ be a root. We construct the space $T_{0,\alpha}$ inductively: deifne $T_{0,\alpha}^{(0)} = [T_{\alpha},T_{-\alpha},T_0],$ and for $k \geq 1$ define $T_{0,\alpha}^{(k)} = [T_{0,\alpha}^{(k-1)},T_0,T_0].$ Finally, we define \begin{equation}
\label{t0alpha}
T_{0,\alpha} = \sum_{k\geq 0} T_{0,\alpha}^{(k)}.
\end{equation}

The spaces $T_{0,\alpha}^{(k)}$ satisfy some relations:

\begin{lemma}
\label{t0alpha_prop}
$1) [T_0,T_0,T_{0,\alpha}^{(k)}] \subseteq T_{0,\alpha}^{(k)},$\\
$2) [T_0,T_{0,\alpha}^{(k)},T_0] \subseteq T_{0,\alpha}^{(k)} + T_{0,\alpha}^{(k+1)}.$
\end{lemma}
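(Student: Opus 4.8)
The plan is to prove both inclusions by induction on $k$, establishing statement 1) in full first and then using it to drive statement 2). Throughout I would expand every triple product by the defining identity (\ref{DII}); since all the $\epsilon$-factors it produces are nonzero scalars, they never affect membership in a subspace, so I would track only which spaces the summands land in. I would also use freely that, by Lemma \ref{lema0}(v), $[T_0,T_0,T_\beta]\subseteq T_\beta$ for every $\beta$, and in particular $[T_0,T_0,T_0]\subseteq T_0$. As usual, it suffices to argue on homogeneous spanning elements.

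For statement 1), in the base case $k=0$ a spanning element of $T_{0,\alpha}^{(0)}$ is $[p,q,r]$ with $p\in T_\alpha$, $q\in T_{-\alpha}$, $r\in T_0$, and I would expand $[a,b,[p,q,r]]$ (for $a,b\in T_0$) by (\ref{DII}) with acting pair $(a,b)$. The three summands are $[[a,b,p],q,r]$, $[p,[a,b,q],r]$, $[p,q,[a,b,r]]$; since $[a,b,p]\in T_\alpha$, $[a,b,q]\in T_{-\alpha}$, $[a,b,r]\in T_0$ by Lemma \ref{lema0}(v), each lies in $[T_\alpha,T_{-\alpha},T_0]=T_{0,\alpha}^{(0)}$. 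For the step $k\geq 1$, a spanning element is $[u,c,d]$ with $u\in T_{0,\alpha}^{(k-1)}$, $c,d\in T_0$; expanding $[a,b,[u,c,d]]$ the same way, the first summand $[[a,b,u],c,d]$ lies in $T_{0,\alpha}^{(k)}$ by the inductive hypothesis applied to $u$, while the remaining two lie in $[T_{0,\alpha}^{(k-1)},T_0,T_0]=T_{0,\alpha}^{(k)}$ because $[a,b,c],[a,b,d]\in T_0$.

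For statement 2), which is the delicate one, the obstacle is that (\ref{DII}) only controls the \emph{third} slot of a triple product, whereas here the distinguished factor sits in the middle. The idea I would use is to write the middle factor itself as a triple product and then run (\ref{DII}) \emph{backwards} to isolate the middle-slot term. Concretely, for a spanning element $w=[\xi_1,\xi_2,\xi_3]$ of $T_{0,\alpha}^{(k)}$ and $a,b\in T_0$, I would expand $[\xi_1,\xi_2,[a,\xi_3,b]]$ by (\ref{DII}) with acting pair $(\xi_1,\xi_2)$ and inner triple $(a,\xi_3,b)$; the middle summand is precisely a nonzero scalar multiple of $[a,[\xi_1,\xi_2,\xi_3],b]=[a,w,b]$, so solving for it yields
\[ [a,w,b]=(\ast)\,[\xi_1,\xi_2,[a,\xi_3,b]]-(\ast)\,[[\xi_1,\xi_2,a],\xi_3,b]-(\ast)\,[a,\xi_3,[\xi_1,\xi_2,b]], \]
where each $(\ast)$ is an invertible scalar. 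It then remains to place the three right-hand terms, invoking statement 1) for the ones whose distinguished factor ends up in the third slot. For $k=0$, with $(\xi_1,\xi_2,\xi_3)=(p,q,r)$: the first term has $[a,r,b]\in T_0$, so it lies in $T_{0,\alpha}^{(0)}$; the second has $[p,q,a]\in T_{0,\alpha}^{(0)}$, so it lies in $T_{0,\alpha}^{(1)}$; the third has $[p,q,b]\in T_{0,\alpha}^{(0)}$ in its third slot with $a,r\in T_0$, so it lies in $T_{0,\alpha}^{(0)}$ by statement 1). For $k\geq 1$, with $(\xi_1,\xi_2,\xi_3)=(u,c,d)$ and $u\in T_{0,\alpha}^{(k-1)}$: the first term lies in $T_{0,\alpha}^{(k)}$, the second has $[u,c,a]\in T_{0,\alpha}^{(k)}$ and so lies in $T_{0,\alpha}^{(k+1)}$, and the third has $[u,c,b]\in T_{0,\alpha}^{(k)}$ in its third slot and so lies in $T_{0,\alpha}^{(k)}$ by statement 1). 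In every case the sum lies in $T_{0,\alpha}^{(k)}+T_{0,\alpha}^{(k+1)}$, as required. I expect the only real care needed to be the bookkeeping of this solve-for-the-middle-term step and the correct invocation of statement 1) at level $k$; the $\epsilon$-factors, being invertible scalars, play no role beyond rescaling.
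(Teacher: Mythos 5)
Your proof is correct and takes exactly the route the paper indicates: the paper omits all details, saying only that the lemma follows by ``a simple induction applying the Leibniz identity,'' which is precisely what you carry out. Your solve-for-the-middle-slot step is the natural (indeed essentially the only) way to make the identity (\ref{DII}) reach the second argument of the outer bracket, and your placement of each resulting term, including the invocations of statement 1) and the invertibility of the $\epsilon$-factors, is accurate.
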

The proof of the lemma is a simple induction applying the Leibniz identity.

Now, let $\alpha, \beta \in \Lambda^T$ be any two roots. Consider the following subspace:
\[T(\alpha,\beta) = \Big([T_\alpha,T_{-\alpha},T_\beta]\Big)_{\circlearrowleft \alpha\leftrightarrow -\alpha, \beta \leftrightarrow - \beta, \alpha \leftrightarrow \beta} + \Big( [T_{0,\alpha},T_\beta,T_0] + [T_{0,\alpha},T_{0,\beta},T_0] \Big)_{\circlearrowleft S^3,\alpha\leftrightarrow -\alpha, \beta \leftrightarrow - \beta, \alpha \leftrightarrow \beta},\]
where the below index means that we also add to the sum all spaces of the following form, changing the order in the triple product or interchanging $\alpha$ and $-\alpha,$ $\beta$ and $-\beta$ and $\alpha$ and $\beta.$ Hence, by construction, this space is symmetrical with respect to the symmetries above:
\[T(\alpha,\beta) = T(-\alpha,\beta) = T(\alpha,-\beta) = T(\beta,\alpha) = \ldots\]  
\subsection{Connections of roots}
Our main tool is the so-called connections of roots. In this section we give the notion of connectivity of roots for Leibniz color $3$-algebras with automorphism. In what follows, $T$ denotes a split Leibniz
color $3$-algebra and $T = T_{0}\oplus \big(\bigoplus_{\alpha \in
\Lambda^T}T_{\alpha }\big)$  {is} its corresponding root spaces
decomposition.

\begin{definition}
The root system $\Lambda^T$ ($\Lambda^{\mathfrak L}$) is called symmetric if $\Lambda^T = -\Lambda^T$ ($\Lambda^{\mathfrak L} = -\Lambda^{\mathfrak L}$), where for $\emptyset \neq \Upsilon \subset H^*$ the set  $-\Upsilon$ is just $\{-\alpha: \alpha \in \Upsilon\}$.
\end{definition}
From now on, we will suppose that both root systems $\Lambda^T, \Lambda^{\mathfrak L}$ are symmetric.

\smallskip



For each $\alpha\in\Lambda^T$ introduce a new symbol $\theta_{\alpha}$, and let $\Theta = \{ \theta_{\alpha}: \alpha \in \Lambda^T \}.$ Define the new operation:
\[
\dotplus: (\Lambda^T \cup  \Lambda^{\mathfrak
L} \cup \Theta_{\Omega}) \times ( \Lambda^T \cup \{0\}) \to H^*
\cup \Theta_{\Omega},
\]
 {defined as follows:}
\begin{itemize}

\item For $\alpha \in  \Lambda^T \cup  \Lambda^{\mathfrak{L}}$ and $\beta \in  \Lambda^T
\cup \{0\}$ with $\beta \neq -\alpha$, the new operation $\alpha \dotplus \beta \in H^*$ coincides with the
usual sum of linear functionals.
\medskip
\medskip
\item For $\alpha \in  \Lambda^T$,
\[
\alpha \dotplus (-\alpha) = \theta_\alpha.
\]

\medskip
\item For $\theta_{\alpha} \in \Theta$ and $\beta \in 
\Lambda^T$,
\[
\quad \quad \quad \quad
\theta_{\alpha} \dotplus \beta = \left \{
\begin{array}{llll}
\beta, & \mbox{ if } \sum_{k \in \mathbb{Z}}T(\alpha\Phi^k,\beta\Phi^k) \neq 0 \\
0, & \mbox{ otherwise.} \\
\end{array}
\right.
\]
\medskip
\item For $\theta_{\alpha}\in \Theta$ and $0:H \to {\hu
K}$, we define $\theta_{\alpha} \dotplus 0 = 0$.
\end{itemize}


\begin{remark}\label{remark1}
Note that $\alpha \dotplus \beta$ {coincides} with $\alpha + \beta$ for $\alpha \in \Lambda^T \cup 
\Lambda^{\mathfrak L}$ and $\beta \in \Lambda^T \cup \{0\}$,  {except for the case}
$\beta = -\alpha$, where $\alpha \dotplus -\alpha = \theta_\alpha.$
\end{remark}

The following results are the basic properties of the operation $\dotplus$ which we will use later (see also \cite[Lemma 3.1]{AJF2} and
\cite[Lemmas 3.2 and 3.3]{AJF2}).

\begin{lemma}\label{lemao}
For any $\alpha, \beta \in \Lambda^T$ such
that $\theta_{\alpha} \dotplus \beta = \beta$
the following assertions hold.

\begin{enumerate}
\item[{\rm (i)}]  $\theta_{\beta} \dotplus \alpha = \alpha$.
\medskip
\item[{\rm (ii)}]  $\theta_{-\alpha} \dotplus \beta = \beta$.
\medskip
\item[{\rm (iii)}]  $\theta_{-\alpha} \dotplus (-\beta) =
-\beta$.
\medskip
\item[{\rm (iv)}] $\theta_{\alpha\Phi^k}\dotplus\beta\Phi^k = \beta\Phi^k$ for all $k \in \mathbb{Z}.$
\end{enumerate}
\end{lemma}
\begin{proof}
Just use the symmetry of the space $T(\alpha,\beta)$ with respect to the symmetries $\alpha \leftrightarrow -\alpha, \beta \leftrightarrow -\beta, \alpha \leftrightarrow \beta$ and the definition of the operation $\dotplus.$ 
\end{proof}


\begin{lemma}
\label{lemasym}
Let $\alpha, \delta \in \Lambda^T, \beta, \gamma \in \Lambda^T \cup \{0\},$ and let $\alpha\dotplus\beta \in \Lambda^{\mathfrak{L}}\cup\Theta, (\alpha\dotplus\beta)\dotplus\gamma = \delta.$ Then the following assertions hold:
\begin{enumerate}
\item[{\rm (i)}] $\alpha\Phi^k\dotplus\beta\Phi^k \in \Lambda^{\mathfrak{L}}\cup\Theta, (\alpha\Phi^k\dotplus\beta\Phi^k)\dotplus\gamma\Phi^k = \delta\Phi^k$ for any $k \in \mathbb{Z}.$
\medskip
\item[{\rm (ii)}]  $(-\alpha)\dotplus(-\beta) \in \Lambda^{\mathfrak{L}}\cup\Theta, ((-\alpha)\dotplus(-\beta))\dotplus(-\gamma) = -\delta.$
\medskip
\item[{\rm (iii)}] $\delta \dotplus (-\gamma) \in \Lambda^{\mathfrak{L}}\cup\Theta, (\delta \dotplus (-\gamma)) \dotplus (-\beta) = \alpha.$
\end{enumerate}
\end{lemma}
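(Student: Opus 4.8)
The plan is to prove all three assertions by tracing how the defining properties of the operation $\dotplus$ and of the spaces $T(\alpha,\beta)$ behave under the three symmetries involved: the action of $\Phi$, the global sign change $\xi \mapsto -\xi$, and the ``solving for $\alpha$'' rearrangement. In each case I would first dispose of the degenerate subcases coming from the piecewise definition of $\dotplus$ (namely when some intermediate sum collapses to a $\theta$-symbol, or when an argument is $0$), and then handle the generic case where $\dotplus$ reduces to ordinary addition of functionals. Throughout, the key structural facts are: Lemma \ref{lema0}(vi)--(vii), which say that $\Phi$ permutes the root systems $\Lambda^T$ and $\Lambda^{\mathfrak L}$ and that $\phi(T_\alpha) \subseteq T_{\alpha\Phi^{-1}}$; the built-in symmetry $T(\alpha,\beta) = T(-\alpha,\beta) = T(\alpha,-\beta) = T(\beta,\alpha) = \cdots$ of the associated spaces; and the symmetry of both root systems, which is in force by the standing assumption of this subsection.

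For part (i), I would argue that applying $\Phi^k$ uniformly to all functionals commutes with $\dotplus$. Since $\Phi$ is an automorphism, $(\alpha+\beta)\Phi^k = \alpha\Phi^k + \beta\Phi^k$ as linear functionals, so in the generic case the claim is immediate. The two special cases to check are: (a) if $\alpha\dotplus\beta = \theta_{\alpha}$ (i.e.\ $\beta=-\alpha$), then $\alpha\Phi^k\dotplus\beta\Phi^k = \theta_{\alpha\Phi^k}$, and by Lemma \ref{lemao}(iv) the subsequent $\dotplus\gamma\Phi^k$ behaves correctly; and (b) if $\alpha\dotplus\beta \in \Lambda^{\mathfrak L}$, one needs $\alpha\Phi^k\dotplus\beta\Phi^k \in \Lambda^{\mathfrak L}$ as well, which follows from Lemma \ref{lema0}(vii). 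The invariance $\sum_{k}T(\mu\Phi^k, \nu\Phi^k) = \sum_k T(\mu\Phi^{k+j},\nu\Phi^{k+j})$ (a reindexing) is what makes the $\theta$-clause of $\dotplus$ insensitive to the shift by $\Phi^j$, giving (i).

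For part (ii), I would use that negation $\xi \mapsto -\xi$ is additive and preserves both symmetric root systems, so $-\alpha, -\beta, -\gamma, -\delta$ again lie in the appropriate sets; the generic case is then just linearity. The only subtlety is the $\theta$-clause: here one invokes the symmetry $T(\alpha,\beta) = T(-\alpha,-\beta)$ (which is a consequence of the listed symmetries $\alpha\leftrightarrow-\alpha$ and $\beta\leftrightarrow-\beta$ applied together), together with Lemma \ref{lemao}(iii), to see that $\theta_{-\alpha}\dotplus(-\beta) = -\beta$ whenever $\theta_\alpha\dotplus\beta=\beta$, so the nonvanishing condition defining $\dotplus$ is preserved under global negation. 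Part (iii) is the genuine content and the main obstacle: it asserts that the relation $(\alpha\dotplus\beta)\dotplus\gamma = \delta$ can be ``inverted'' to recover $\alpha$ from $\delta, \gamma, \beta$ via $(\delta\dotplus(-\gamma))\dotplus(-\beta) = \alpha$. My plan is to derive (iii) from (ii) by formal manipulation: apply (ii) to get $((-\alpha)\dotplus(-\beta))\dotplus(-\gamma) = -\delta$, and then I would set up the algebra so that adding back $\gamma$ and $\beta$ cancels. The hard part will be that $\dotplus$ is \emph{not} associative or cancellative in general because of the $\theta$-symbols, so one cannot merely ``subtract''; instead I expect to split into the cases according to whether the intermediate quantities $\alpha\dotplus\beta$, $\delta\dotplus(-\gamma)$ land in $\Lambda^{\mathfrak L}$ or in $\Theta$, and in each case verify directly, using $\delta = \alpha+\beta+\gamma$ at the level of honest functionals together with Lemma \ref{lemao} to control the $\theta$-transitions. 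The role of the $\Phi$-orbit sums in the definition of $\theta_\alpha\dotplus\beta$ will be precisely what guarantees that these case distinctions are consistent under the rearrangement.
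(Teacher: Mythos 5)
Your proposal is correct and follows essentially the same route as the paper: both arguments split into the generic case $\alpha\dotplus\beta \in \Lambda^{\mathfrak L}$, where $\dotplus$ is ordinary addition of functionals and everything follows from linearity and the $\Phi$-invariance of the root systems (Lemma \ref{lema0}(vi)--(vii)), and the degenerate case $\alpha\dotplus\beta = \theta_\alpha$ (so $\beta = -\alpha$, $\gamma = \delta$), which is resolved by the appropriate parts of Lemma \ref{lemao} --- (iv) for assertion (i), (iii) for assertion (ii), and (i) for assertion (iii). In particular, your fallback plan for (iii) --- abandoning the cancellation idea and instead checking the two cases directly using $\delta = \alpha + \beta + \gamma$ at the level of honest functionals plus Lemma \ref{lemao} for the $\theta$-transitions --- is precisely the paper's proof.
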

\begin{proof}
(i) Suppose first that $\alpha\dotplus\beta \in \Lambda^{\mathfrak{L}}.$ Then by remark \ref{remark1}, we have $\alpha\dotplus\beta = \alpha + \beta, (\alpha\dotplus\beta)\dotplus\gamma = \alpha + \beta + \gamma = \delta.$ Multiplying these equations by $\Phi^k,$ we get $\alpha\Phi^k \dotplus \beta\Phi^k = \alpha\Phi^k + \beta\Phi^k = (\alpha+\beta)\Phi^k \in \Lambda^{\mathfrak{L}}, (\alpha\Phi^k\dotplus\beta\Phi^k)\dotplus\gamma\Phi^k = \alpha\Phi^k + \beta\Phi^k + \gamma\Phi^k = \delta\Phi^k.$

Now suppose that $\alpha + \beta = \theta_\alpha$ and $\theta_\alpha + \gamma = \delta.$ Then $\alpha + \beta = 0, \gamma = \delta$ and $\alpha\Phi^k \dotplus \beta\Phi^k = \theta_{\alpha\Phi^k}.$ Since $\theta_\alpha \dotplus \gamma = \gamma,$ 
the point (iv) of the above lemma implies that
$\theta_{\alpha\Phi^k} \dotplus \gamma\Phi^k = \gamma\Phi^k = \delta\Phi^k$ for all $k \in \mathbb{Z}.$ 

\medskip

The point (ii) is proved completely analogously, just use the
point (iii) of the previous lemma.

\medskip

(iii) Suppose first that $\alpha\dotplus\beta \in \Lambda^{\mathfrak{L}}.$ Then again we have $\alpha\dotplus\beta = \alpha + \beta, (\alpha\dotplus\beta)\dotplus\gamma = \alpha + \beta + \gamma = \delta.$ Therefore, $\delta \dotplus (-\gamma) = \delta - \gamma = \alpha + \beta \in \Lambda^{\mathfrak{L}}, (\delta \dotplus (-\gamma)) \dotplus (-\beta) = \delta - \gamma - \beta = \alpha.$

Now suppose that $\alpha \dotplus \beta \in \Theta.$ Then again $\alpha + \beta = 0, \gamma = \delta$ and $\theta_\alpha \dotplus \gamma = \gamma.$ Hence, $\delta \dotplus (-\gamma) = \theta_\delta \in \Theta$ and $(\delta \dotplus (-\gamma)) \dotplus (-\beta) = \theta_\gamma \dotplus \alpha = \alpha$
by the point (i) of the previous lemma.
\end{proof}

We are now ready to introduce the key tool in our study.

\begin{definition}\label{defco}
Let $\alpha$ and $\beta$ be two nonzero roots in $\Lambda^T.$ We say that
$\alpha$ is connected to $\beta$ (and denote it by $\alpha \sim \beta$) if there exists an ordered set
$\{\alpha_1,\alpha_2,\ldots,\alpha_{2n},\alpha _{2n+1}\}\subset
\Lambda^T \cup \{0\}$  satisfying the following conditions: 
\begin{itemize}
\item[{\bf 1.}] $\alpha_1=\alpha.$
\medskip
\item[{\bf 2.}] An odd numbers of factors operated under $\dotplus$ belongs to $\pm \Lambda^T$. More
precisely,
\begin{align*}
\big \{\alpha_1, & (\alpha_1 \dotplus
\alpha_2) \dotplus \alpha_3,
 (((\alpha_1 \dotplus \alpha_2) \dotplus \alpha_3) \dotplus \alpha_4) \dotplus \alpha_5, \ldots,
\\
& ((\ldots((\alpha_1 \dotplus \alpha_2) \dotplus \alpha_3)  \dotplus \ldots) \dotplus \alpha _{2n}) \dotplus \alpha_{2n+1} \big \} \subset \pm
\Lambda^T
\end{align*}
\item [{\bf 3.}] The result of the operation of an even numbers of
factors under $\dotplus$ belongs to
$\pm \Lambda^{\mathfrak L}$ or $\Theta$, that
is,
\begin{align*}
\big \{
(\alpha_1  \dotplus \alpha_2 ), ((\alpha_1  \dotplus \alpha_2 ) \dotplus \alpha_3) \dotplus \alpha_4 , \ldots,
\\
& ((\ldots((\alpha_1  \dotplus \alpha _{2} ) \dotplus \alpha_3 )  \dotplus
\ldots) \dotplus \alpha_{2n-1} ) \dotplus \alpha_{2n}  \big \} \subset \pm
\Lambda^{\mathfrak L}\cup \Theta.
\end{align*}
\item [{\bf 4.}]
$
((\ldots((\alpha_1 \dotplus \alpha_2) \dotplus \alpha_3) \dotplus
\ldots) \dotplus \alpha_{2n}) \dotplus \alpha _{2n+1} = \pm \beta\Phi^k, k \in \mathbb{Z}.
$
\end{itemize}
The family $\{\alpha_1,\alpha_2,\ldots \alpha_{2n},\alpha _{2n+1}\}$ is called a  connection from $\alpha$ to $\beta$.
\end{definition}

By definition, for any $\alpha \in \Lambda^{T}$ the set
$\{\alpha\}$ is a connection from $\alpha$ to any root $\pm\alpha\Phi^k, k \in \mathbb{Z}.$ In particular, the relation $\sim$ is reflexive.

Let $\{\alpha_{1},\alpha _{2},\alpha _{3}, \ldots ,\alpha_{2n-1},\alpha_{2n}, \alpha_{2n+1}\}\subset \pm \Lambda^T \cup \{0\}$ be a connection from $\alpha$ to $\beta$ such that
\[\hbox{$((\ldots((\alpha_{1}\dotplus \, \alpha_{2})\dotplus
\alpha_3)\dotplus \ldots )\dotplus\alpha_{2n})\dotplus \alpha _{2n+1} =
\varepsilon \beta\Phi^k,$ where $\varepsilon \in \{\pm1\}, k \in \mathbb{Z}$.}\]
If $n = 0$, then $\alpha = \pm\beta\Phi^{-k}$. So $\{\beta\}$ is a connection from $\beta$ to $\alpha$.
If $n\geq 1$, then points (i), (ii) of lemma \ref{lemasym} imply that
\[\hbox{$((\ldots((\varepsilon\alpha_{1}\Phi^{-k}\dotplus \, \varepsilon\alpha_{2}\Phi^{-k})\dotplus
\varepsilon\alpha_{3}\Phi^{-k})\dotplus \ldots )\dotplus\varepsilon\alpha_{2n}\Phi^{-k})\dotplus \varepsilon\alpha_{2n+1}\Phi^{-k} =
\beta,$}\]
and (iii) of lemma \ref{lemasym} implies that 
$\{\beta ,-\varepsilon\alpha_{2n+1}\Phi^{-k} ,-\varepsilon\alpha_{2n}\Phi^{-k},\ldots, -\varepsilon\alpha_3\Phi^{-k},-\varepsilon\alpha_2\Phi^{-k}\}$ is a connection from $\beta$ to $\alpha$. That is, $\sim$ is symmetric.

Now, let $\{\alpha _{1},\alpha _{2},\ldots, \alpha_{2n+1}\}$ and $\{\beta
_{1},\beta _{2}, \ldots , \beta_{2m+1}\}$ be connections from $\alpha$ to $\beta$ and from $\beta$
to $\gamma$, respectively, such that 
\[(\ldots(\alpha _{1}\dotplus \alpha_{2})\dotplus \ldots)\dotplus
\alpha_{2n+1}=\varepsilon_1\beta\Phi^k,\]
\[(\ldots(\beta_{1}\dotplus \beta_{2})\dotplus \ldots)\dotplus
\beta_{2m+1}=\varepsilon_2\gamma\Phi^l,\]
where $\varepsilon_1, \varepsilon_2 = \pm 1, k, l \in \mathbb{Z}.$ Again, points (i), (ii) of lemma \ref{lemasym} imply that 
\[(\ldots(\varepsilon_1\beta_1\Phi^k\dotplus \varepsilon_1\beta_2\Phi^k)\dotplus \ldots)\dotplus
\varepsilon_1\beta_{2m+1}\Phi^k=\varepsilon_1\varepsilon_2\gamma\Phi^{k+l}.\]
Thus, if $m=0$, then $\beta = \varepsilon_2\gamma\Phi^l$ and $\{\alpha
_{1},\alpha _{2}, \ldots , \alpha_{2n+1}\}$ is a connection from
$\alpha$ to $\gamma$, and if $m\geq 1$,  
$\{\alpha _{1},\alpha _{2},\ldots, \alpha_{2n+1},\varepsilon_1 \beta
_{2}\Phi^k,\ldots, \epsilon_1 \beta_{2m+1}\Phi^k\}$ is a connection from $\alpha$
to $\gamma$. That is, $\sim$ is transitive.  We have proved the following statement:



\begin{proposition}\label{equivalence}
Suppose that the root systems $\Lambda^T, \Lambda^{\mathfrak L}$ are symmetric. Then the relation $\sim$ on $\Lambda^T$ is an equivalence
relation.
\end{proposition}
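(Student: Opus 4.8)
The plan is to verify the three defining properties of an equivalence relation---reflexivity, symmetry, and transitivity---on $\Lambda^T$ directly from Definition \ref{defco} of connectivity, leaning throughout on the auxiliary Lemma \ref{lemasym}, which packages exactly the $\dotplus$-manipulations I will need. Reflexivity is immediate: for any $\alpha \in \Lambda^T$, the singleton $\{\alpha\}$ satisfies conditions \textbf{1.}--\textbf{4.} of the definition (with $n = 0$), since $\alpha_1 = \alpha = \pm\alpha\Phi^0 \in \pm\Lambda^T$, so $\alpha \sim \alpha$.

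For symmetry, I would start from a connection $\{\alpha_1,\ldots,\alpha_{2n+1}\}$ from $\alpha$ to $\beta$ whose total $\dotplus$-sum equals $\varepsilon\beta\Phi^k$ for some $\varepsilon \in \{\pm 1\}$, $k \in \mathbb{Z}$. The case $n=0$ is trivial. For $n \geq 1$, the idea is to first apply parts (i) and (ii) of Lemma \ref{lemasym} to scale the whole computation by $\varepsilon\Phi^{-k}$, turning the total sum into exactly $\beta$; then apply part (iii) repeatedly to ''reverse'' the bracketing, producing the explicitly reversed and negated family $\{\beta, -\varepsilon\alpha_{2n+1}\Phi^{-k}, -\varepsilon\alpha_{2n}\Phi^{-k}, \ldots, -\varepsilon\alpha_2\Phi^{-k}\}$, which one checks is a connection from $\beta$ to $\alpha$. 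Thus $\sim$ is symmetric.

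For transitivity, suppose $\{\alpha_1,\ldots,\alpha_{2n+1}\}$ connects $\alpha$ to $\beta$ with total sum $\varepsilon_1\beta\Phi^k$, and $\{\beta_1,\ldots,\beta_{2m+1}\}$ connects $\beta$ to $\gamma$ with total sum $\varepsilon_2\gamma\Phi^l$. The plan is to transport the second connection by the automorphism-scaling $\varepsilon_1\Phi^k$ using parts (i) and (ii) of Lemma \ref{lemasym}, so that its total sum becomes $\varepsilon_1\varepsilon_2\gamma\Phi^{k+l}$; then concatenate the first connection with the (scaled) tail $\varepsilon_1\beta_2\Phi^k, \ldots, \varepsilon_1\beta_{2m+1}\Phi^k$ of the second, dropping the redundant $\beta_1$. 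One verifies that the parity and membership conditions \textbf{2.}, \textbf{3.} are preserved under concatenation because the first connection ends in $\pm\Lambda^T$ and the scaled second connection resumes the $\dotplus$-chain consistently, yielding a single connection from $\alpha$ to $\gamma$.

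The main obstacle I anticipate is the bookkeeping in the symmetry and transitivity arguments: one must carefully track how the sign $\varepsilon$ and the power $\Phi^k$ propagate through the nested $\dotplus$-operations, and confirm that the parity constraints (odd-length partial sums in $\pm\Lambda^T$, even-length partial sums in $\pm\Lambda^{\mathfrak{L}}\cup\Theta$) survive reversal and concatenation. The delicate point is precisely the $\Theta$-case of Lemma \ref{lemasym}, where $\alpha\dotplus\beta = \theta_\alpha$ forces $\alpha+\beta = 0$; here the genuine sum of functionals degenerates, and one must invoke parts (i)--(iv) of Lemma \ref{lemao} (the symmetry of the spaces $T(\alpha,\beta)$) rather than naive additivity to keep the chain valid. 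Once these sign-and-power manipulations are handled cleanly via Lemma \ref{lemasym}, the three properties follow and the proposition is established.
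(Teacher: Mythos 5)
Your proposal is correct and follows essentially the same route as the paper: reflexivity via the singleton $\{\alpha\}$, symmetry by rescaling with parts (i), (ii) of Lemma \ref{lemasym} and reversing via part (iii) to obtain precisely the family $\{\beta,-\varepsilon\alpha_{2n+1}\Phi^{-k},\ldots,-\varepsilon\alpha_2\Phi^{-k}\}$, and transitivity by $\varepsilon_1\Phi^k$-scaling the second connection and concatenating its tail (dropping $\beta_1$), with the degenerate cases $n=0$, $m=0$ handled separately. Your remark that the $\Theta$-cases rest on Lemma \ref{lemao} rather than naive additivity matches exactly where the paper places that burden.
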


\subsection{Decompositions}
Let $T$ be a split Leibniz color $3$-algebra with an automorphism and $T = T_{0}\oplus \big(\bigoplus_{\alpha \in \Lambda^T}T_{\alpha }\big)$
its corresponding root spaces decomposition. By Proposition \ref{equivalence} the connectivity relation is an equivalence
relation in $\Lambda^T,$ so we can consider the quotient set $$\Lambda^T / \sim=\{[\alpha]: \alpha \in \Lambda^T\},$$
 {where} $[\alpha]$  {is} the set of nonzero roots of $T$ which are connected to $\alpha$. By the definition of $\sim$, it is clear that if $\beta \in [\alpha]$  then $\pm\beta\Phi^k \in [\alpha]$ for all $k \in \mathbb{Z}$.

We also list some important particular cases in which two roots are connected:

\begin{lemma}
\label{lema1}
Let $\alpha, \beta \in \Lambda^T$ be roots.
\begin{enumerate}
\item[{\rm (i)}] If
$\alpha + \beta  \in \Lambda^{\mathfrak L}\cup \{0\}$, then $\alpha \sim \beta.$
\item[{\rm (ii)}] If $T(\alpha,\beta) \neq 0,$ then $\alpha \sim \beta.$
\end{enumerate}
\end{lemma}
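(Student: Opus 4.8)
The plan is to prove both parts directly from Definition \ref{defco} by exhibiting, in each case, an explicit short connection (of length one or three) from $\alpha$ to $\beta$ and verifying conditions {\bf 1}--{\bf 4}. No deep machinery is needed beyond the explicit description of the operation $\dotplus$ together with the elementary observation that $0 \notin \Lambda^{\mathfrak L}$ (roots being nonzero by definition).

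For part (i), I would split into two cases according to whether $\alpha + \beta = 0$ or $\alpha + \beta \in \Lambda^{\mathfrak L}$. If $\alpha + \beta = 0$, then $\beta = -\alpha$, and I invoke the already-recorded fact that the singleton $\{\alpha\}$ (the case $n = 0$) is a connection from $\alpha$ to every root of the form $\pm\alpha\Phi^k$; taking the minus sign and $k = 0$ yields a connection from $\alpha$ to $-\alpha = \beta$, so $\alpha \sim \beta$. If instead $\alpha + \beta \in \Lambda^{\mathfrak L}$, then $\alpha + \beta \neq 0$ (since $0 \notin \Lambda^{\mathfrak L}$), hence $\beta \neq -\alpha$, and I would take the length-three family $\{\alpha, \beta, -\alpha\}$. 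Here $\alpha_1 = \alpha \in \Lambda^T$; the even step gives $\alpha \dotplus \beta = \alpha + \beta \in \Lambda^{\mathfrak L}$ by Remark \ref{remark1}; and the final odd step gives $(\alpha + \beta) \dotplus (-\alpha) = \beta \in \Lambda^T$, an ordinary sum of functionals (legitimate because $\beta \neq 0$, so no $\theta$-symbol is produced). Thus conditions {\bf 2}--{\bf 4} hold with $\varepsilon = 1$, $k = 0$, giving $\alpha \sim \beta$.

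For part (ii), the key observation is that $T(\alpha,\beta) \neq 0$ forces $\sum_{k \in \mathbb{Z}} T(\alpha\Phi^k,\beta\Phi^k) \neq 0$ (the $k = 0$ summand alone being nonzero), so by the definition of $\dotplus$ on $\Theta \times \Lambda^T$ we get $\theta_\alpha \dotplus \beta = \beta$. I would then take the length-three family $\{\alpha, -\alpha, \beta\}$: the first element is $\alpha \in \Lambda^T$; the even step gives $\alpha \dotplus (-\alpha) = \theta_\alpha \in \Theta$; and the final odd step gives $\theta_\alpha \dotplus \beta = \beta \in \Lambda^T$. Conditions {\bf 1}--{\bf 4} are satisfied, so $\alpha \sim \beta$.

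I do not expect a serious obstacle here; the only delicate points are bookkeeping ones, namely checking that the value after an odd number of $\dotplus$-operations lands in $\pm\Lambda^T$ while the value after an even number lands in $\pm\Lambda^{\mathfrak L} \cup \Theta$, and ensuring in part (i) that one does not inadvertently create a $\theta$-symbol where an ordinary sum is intended. This last subtlety is precisely why the degenerate case $\alpha + \beta = 0$ must be separated out and disposed of by the trivial connection $\{\alpha\}$.
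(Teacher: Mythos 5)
Your proof is correct, and it follows the same strategy as the paper: exhibit an explicit short connection and verify conditions {\bf 1}--{\bf 4} of Definition \ref{defco}. For part (ii) your connection $\{\alpha,-\alpha,\beta\}$ is literally the one the paper uses, with the justification (nonvanishing of the $k=0$ summand of $\sum_k T(\alpha\Phi^k,\beta\Phi^k)$) spelled out. For part (i), however, your version is more careful than the paper's, and in fact repairs it: the paper asserts that $\{\alpha,\beta,0\}$ is a connection from $\alpha$ to $\beta$, but the terminal value of that chain is $(\alpha\dotplus\beta)\dotplus 0=\alpha+\beta$ (or $\theta_\alpha\dotplus 0=0$ in the degenerate case $\beta=-\alpha$), which in general is neither in $\pm\Lambda^T$ nor of the form $\pm\beta\Phi^k$, so conditions {\bf 2} and {\bf 4} fail as literally stated; the chain $\{\alpha,\beta,0\}$ actually connects $\alpha$ to $-(\alpha+\beta)$, which is how it is used later in Lemma \ref{lema3}. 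Your replacement $\{\alpha,\beta,-\alpha\}$, with terminal value $(\alpha+\beta)\dotplus(-\alpha)=\beta$, is the correct chain (it quietly uses the standing symmetry assumption $-\alpha\in\Lambda^T$), and your separate treatment of $\beta=-\alpha$ via the trivial connection $\{\alpha\}$ is genuinely necessary, since the three-term chain would there pass through $\theta_\alpha$ and need not terminate at a root. In short: correct proof, same method, and your part (i) fixes what is evidently a typo in the paper's one-line argument.
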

\begin{proof}
(i) One can easily check that the set $\{\alpha,\beta,0\}$ is a connection from $\alpha$ to $\beta.$

(ii) One can easily check that the set $\{\alpha,-\alpha,\beta\}$ is a connection from $\alpha$ to $\beta.$
\end{proof}

\smallskip

Our goal in this section is to associate an ideal $T_{[\alpha]}$  {of $T$}
to  {each} $[\alpha]$.  {Given} $\alpha \in \Lambda^T$, we start by
defining the set $T_{0,[\alpha]} \subset T_0$ as follows:
\begin{equation}
\label{t0[alpha]}
T_{0,[\alpha]} = {\rm span}_{\hu K}\big \{\,[T_{\beta},T_{\gamma},
T_{\delta}]: \beta, \gamma, \delta \in [\alpha] \cup \{0\}, \beta + \gamma + \delta = 0 \mbox{ and } \delta \neq 0  \big \} + \sum_{\beta \in [\alpha]} T_{0,\beta}.    
\end{equation}
Next, we define
\begin{equation}
\label{v[alpha]}
V_{[\alpha]}:=\bigoplus_{\beta \in [\alpha]}T_{\beta}
\end{equation}  and
\begin{equation}
\label{t[alpha]}
T_{[\alpha]}:= T_{0,[\alpha]}\oplus {V}_{[\alpha]}.
\end{equation}

\begin{remark}
Note that the space $T_{0,[\alpha]}$ collapses in special cases. For example, if the $\mathbb{G}$-grading is trivial, then for any $\alpha \in \Lambda^T$ we have $[T_\alpha,T_{-\alpha},T_0] \subseteq HT_0 = 0.$ If $T$ is a $Hom$-Lie color 3-algebra, then $T_{0,\alpha}^{(k)} = [T_{0,\alpha}^{(k-1)},T_0,T_0] = [T_0,T_0,T_{0,\alpha}^{(k-1)}] \subseteq T_{0,\alpha}^{(k-1)},$ and for any $\beta \in [\alpha]$ we have $[T_\beta,T_{-\beta},T_0] = [0,T_\beta,T_{-\beta}]$. Hence, in this case $T_{0,[\alpha]} = {\rm span}_{\hu K}\big \{\,[T_{\beta},T_{\gamma},
T_{\delta}]: \beta, \gamma, \delta \in [\alpha] \cup \{0\}, \beta + \gamma + \delta = 0 \mbox{ and } \delta \neq 0  \big \}.$ In fact, the same result holds for $T$ a regular $Hom$-Lie triple system (for more details, see lemma \ref{lema51}). 
\end{remark}



Our aim is to show that $T_{[\alpha]}$ is an ideal of $T$ for any $[\alpha]$. We begin by considering the products involving the spaces $T_\beta, \beta \in [\alpha].$


\begin{lemma}\label{lema3}
Fix $\alpha_0 \in \Lambda^T$. The following assertions hold for
$\alpha \in [{\alpha_0}]$ and $\beta, \gamma \in \Lambda^T \cup
\{0\}$.

\begin{enumerate}
\item[{\rm (i)}] If $\big[T_\alpha, T_\beta, T_\gamma \big] \neq
0$, then $\beta, \gamma, \alpha + \beta + \gamma \in [{\alpha_0}]
\cup \{0\}$ and $\big[T_\alpha, T_\beta, T_\gamma \big] \subseteq T_{[\alpha_0]}$.
\smallskip
\item[{\rm (ii)}] If $\big[T_\beta, T_\alpha, T_\gamma \big] \neq 0$, then
 $\beta, \gamma, \beta +
\alpha  + \gamma \in [{\alpha_0}] \cup \{0\}$ and $\big[T_\beta, T_\alpha, T_\gamma \big] \subseteq T_{[\alpha_0]}$.
\smallskip
 \item[{\rm (iii)}] If $\big[T_\beta, T_\gamma, T_\alpha \big] \neq 0$,
then $\beta, \gamma, \beta + \gamma  + \alpha \in [{\alpha_0}]
\cup \{0\}$ and $\big[T_\beta, T_\gamma, T_\alpha \big] \subseteq T_{[\alpha_0]}$.
\end{enumerate}
\end{lemma}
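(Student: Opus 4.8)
The plan is to prove all three assertions by exhibiting explicit connections and tracking root spaces using Lemma~\ref{lema0}(v), the definition of $T(\alpha,\beta)$, and the characterization of connectivity in Lemma~\ref{lema1}. The three statements are structurally parallel, so I would prove (i) carefully and then indicate how (ii) and (iii) follow by the same argument with the roles of the slots permuted.

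**First I would prove (i).** Suppose $[T_\alpha,T_\beta,T_\gamma]\neq 0$ with $\alpha\in[\alpha_0]$. By Lemma~\ref{lema0}(v) we immediately get $\alpha+\beta+\gamma\in\Lambda^T\cup\{0\}$, which already handles the membership of the sum in $H^*$; the real work is showing $\beta,\gamma,\alpha+\beta+\gamma$ all lie in $[\alpha_0]\cup\{0\}$, i.e.\ that whenever they are nonzero roots they are connected to $\alpha_0$. The strategy is: since $\sim$ is an equivalence relation (Proposition~\ref{equivalence}) and $\alpha\sim\alpha_0$, it suffices to show each of $\beta,\gamma,\alpha+\beta+\gamma$ is connected to $\alpha$. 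To connect $\gamma$ to $\alpha$, I would produce a connection from $\alpha$ to $\gamma$: the ordered set $\{\alpha,\beta,\gamma\}$ is a natural candidate, since $(\alpha\dotplus\beta)\dotplus\gamma=\alpha+\beta+\gamma$ when no cancellation occurs, and one checks the odd/even membership conditions of Definition~\ref{defco} using that $\alpha+\beta\in\Lambda^{\mathfrak L}\cup\{0\}$ (because $\ad(T_\alpha,T_\beta)T_\gamma\neq0$ forces $\ad(T_\alpha,T_\beta)\neq0$, so $\alpha+\beta\in\Lambda^{\mathfrak L}\cup\{0\}$ by Lemma~\ref{lema0}(i)). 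To connect $\beta$ to $\alpha$ I would instead invoke the symmetry of $T(\alpha,\beta)$: the nonvanishing of the product shows $T(\alpha,\beta)\neq0$, whence $\alpha\sim\beta$ by Lemma~\ref{lema1}(ii). For the sum $\alpha+\beta+\gamma$, if it is a nonzero root I would connect it to $\alpha$ via the same connection $\{\alpha,\beta,\gamma\}$, reading Definition~\ref{defco} in the forward direction. Finally, the containment $[T_\alpha,T_\beta,T_\gamma]\subseteq T_{[\alpha_0]}$ follows by casework: if $\alpha+\beta+\gamma\neq0$ it is a root in $[\alpha_0]$, so the product sits in $T_{\alpha+\beta+\gamma}\subseteq V_{[\alpha_0]}\subseteq T_{[\alpha_0]}$; if $\alpha+\beta+\gamma=0$ the product lands in $T_0$, and it matches exactly the generating form in the definition~\eqref{t0[alpha]} of $T_{0,[\alpha_0]}$ (a bracket $[T_\beta,T_\gamma,T_\delta]$ of summing-to-zero roots from $[\alpha_0]\cup\{0\}$), so it lies in $T_{0,[\alpha_0]}\subseteq T_{[\alpha_0]}$.

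**Then I would handle (ii) and (iii)** by the same template, exploiting that $T(\alpha,\beta)$ is fully symmetric under reordering the triple product and under $\alpha\leftrightarrow-\alpha$, $\beta\leftrightarrow-\beta$, $\alpha\leftrightarrow\beta$, so the connectivity conclusions are insensitive to which slot $\alpha$ occupies. The only adjustment is which pair one reads off as lying in $\Lambda^{\mathfrak L}\cup\{0\}$ to build the length-three connection; in each case one of the three pairwise partial sums is forced into $\Lambda^{\mathfrak L}\cup\{0\}$ by Lemma~\ref{lema0}(i), and the corresponding reordered triple $\{\alpha,\cdot,\cdot\}$ serves as the connection. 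The containment arguments for (ii) and (iii) are verbatim the same as in (i), again splitting on whether the total sum of roots is zero.

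**The main obstacle** I expect is the bookkeeping in the connectivity verification rather than any conceptual difficulty: one must check the parity conditions (odd-indexed partial results land in $\pm\Lambda^T$, even-indexed ones in $\pm\Lambda^{\mathfrak L}\cup\Theta$) of Definition~\ref{defco} precisely, and be careful in the degenerate cases where $\beta=-\alpha$ (so that $\alpha\dotplus\beta=\theta_\alpha\in\Theta$ rather than an honest root) or $\alpha+\beta+\gamma=0$. The $\Theta$-cases are exactly where one invokes the definition $\theta_\alpha\dotplus\gamma=\gamma$ via nonvanishing of $\sum_k T(\alpha\Phi^k,\gamma\Phi^k)$, so I would confirm that the relevant $T(\alpha,\gamma)$ is nonzero whenever the triple product is, which follows again from the symmetric defining form of $T(\alpha,\beta)$ together with Lemma~\ref{t0alpha_prop}. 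Once these degenerate branches are dispatched, the rest is routine application of the equivalence relation and the root-space inclusions.
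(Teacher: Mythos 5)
Your overall skeleton (connect each of $\beta$, $\gamma$, $\alpha+\beta+\gamma$ to $\alpha$ and then split into degenerate cases, exactly as the paper does for part (i)) is right, but two of the concrete steps it rests on are wrong. First, the set $\{\alpha,\beta,\gamma\}$ is \emph{not} a connection from $\alpha$ to $\gamma$. In Definition \ref{defco} the endpoint of a connection is pinned down by condition {\bf 4}: the final partial result must equal $\pm\gamma\Phi^k$. For $\{\alpha,\beta,\gamma\}$ that final result is $(\alpha\dotplus\beta)\dotplus\gamma=\alpha+\beta+\gamma$ (in the generic case $\beta\neq-\alpha$), so this chain connects $\alpha$ to $\alpha+\beta+\gamma$ --- which is why it legitimately handles the third root --- but it says nothing about $\gamma$ unless $\alpha+\beta+\gamma$ happens to equal $\pm\gamma\Phi^k$. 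Using the same chain for two different endpoints is inconsistent with the definition. The paper instead connects $\alpha$ to $\gamma$ by the chain $\{\alpha,\beta,-\alpha-\beta-\gamma\}$ (final result $-\gamma$; note $-(\alpha+\beta+\gamma)\in\Lambda^T\cup\{0\}$ by Lemma \ref{lema0}(v) and symmetry), by $\{\alpha,\beta,0\}$ in the subcase $\alpha+\beta+\gamma=0$, and by a separate argument in the subcase $\alpha+\beta=0$.

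Second, your route to $\beta$ --- ``the nonvanishing of $[T_\alpha,T_\beta,T_\gamma]$ shows $T(\alpha,\beta)\neq0$, whence $\alpha\sim\beta$ by Lemma \ref{lema1}(ii)'' --- is false. The space $T(\alpha,\beta)$ is spanned only by products of the special shapes $[T_{\pm\alpha},T_{\mp\alpha},T_{\pm\beta}]$ (and their permuted and symmetrized versions) and by products involving $T_{0,\alpha}$, $T_{0,\beta}$, $T_0$; a generic product $[T_\alpha,T_\beta,T_\gamma]$ is not among these terms, so its nonvanishing gives no information about $T(\alpha,\beta)$. The correct argument, whose main ingredient you already state but then use only for parity bookkeeping, is: $[T_\alpha,T_\beta,T_\gamma]\neq0$ forces $\ad(T_\alpha,T_\beta)\neq0$, hence $\alpha+\beta\in\Lambda^{\mathfrak L}\cup\{0\}$ by Lemma \ref{lema0}(i), hence $\alpha\sim\beta$ by Lemma \ref{lema1}\emph{(i)}, not (ii). Part (ii) of Lemma \ref{lema1} is needed precisely in the degenerate case $\beta=-\alpha$, where $[T_\alpha,T_{-\alpha},T_\gamma]\neq0$ \emph{is} one of the defining terms of $T(\alpha,\gamma)$ and yields $\alpha\sim\gamma$; this is how the paper dispatches that case. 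A smaller inaccuracy: when $\alpha+\beta+\gamma=0$ with $\gamma=0$ (so $\beta=-\alpha$), the product $[T_\alpha,T_{-\alpha},T_0]$ is not of the generating form in (\ref{t0[alpha]}), since that form requires the third root to be nonzero; it lies in $T_{0,[\alpha_0]}$ through the summand $\sum_{\beta\in[\alpha_0]}T_{0,\beta}$ instead.
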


\begin{proof}
This lemma is proved analogously to the lemma \cite[Lemma 4.4]{3L}, so we only consider the case (i) (to gain insight into the definition of the operation $\dotplus$ and the spaces $T(\alpha,\beta)$) and direct the interested reader to the paper mentioned above for other cases.

(i) By (i) of lemma \ref{lema0} $\alpha + \beta \in \Lambda^{\mathfrak{L}} \cup \{0\},$ hence, $\beta \in  [\alpha_0] \cup \{0\}$ by (i) of lemma \ref{lema1}. Consider
two cases:
\begin{enumerate}
\item $\alpha + \beta + \gamma = 0.$

It remains to check that $\gamma \in [{\alpha_0}] \cup \{0\}$. If
$\gamma \neq 0$ since $-\gamma = \alpha + \beta \in \Lambda^{\mathfrak{L}}$, one can see that the family $\{\alpha, \beta,0 \}$ is a connection from $\alpha$ to $\gamma$.  
\smallskip
\item $\alpha + \beta + \gamma \neq 0.$ Moreover, $[T_\alpha,T_\beta,T_\gamma] \subseteq T_{0,[\alpha_0]}$ by the definition,

Assume first that $\alpha + \beta \neq 0$, then $\alpha + \beta
\in \Lambda^{\mathfrak{L}}$ and $\{\alpha, \beta, \gamma \}$ is a connection from $\alpha$ to $\alpha + \beta + \gamma$ and the whole product lies in $T_{\alpha+\beta+\gamma} \subseteq V_{[\alpha_0]}.$

If $\gamma \neq 0$, then the family $\{\alpha,
\beta, -\alpha- \beta -\gamma \}$ is a connection from $\alpha$ to $\gamma$.

Finally, if $\alpha + \beta = 0$ then necessarily $\gamma \neq 0$ and $[T_\alpha,T_{-\alpha},T_\gamma] \neq 0.$ Therefore, $T(\alpha,\gamma) \neq 0$ and $\alpha \sim \gamma$ by (ii) of lemma \ref{lema1}.
\end{enumerate}
\end{proof}

In the next two lemmas we consider the products involving the space $T_{0,[\alpha_0]}.$

\begin{lemma}\label{lema51}
Fix $\alpha_0 \in \Lambda^T$. Let $\alpha, \beta  \in
[\alpha_0] \cup \{0\}, \gamma \in [\alpha_0]$ with $\alpha + \beta + \gamma = 0$ and
$\delta, \epsilon \in \Lambda^T \cup \{0\}$. Then the following
assertions hold:

\begin{enumerate}
\item[\rm(i)] If $[[T_\alpha, T_\beta,
T_\gamma],T_\delta,T_\epsilon] \neq 0 $ then $\delta, \epsilon,
\delta + \epsilon \in [{\alpha_0}]\cup \{0\}$ and $[[T_\alpha, T_\beta,
T_\gamma],T_\delta,T_\epsilon] \subseteq T_{[\alpha_0]}$.
\smallskip
\item[\rm(ii)] If  $[T_\delta,  [T_\alpha, T_\beta, T_\gamma],
T_{\epsilon}\big] \neq 0$ then $\delta, \epsilon, \delta  +
\epsilon \in [{\alpha_0}] \cup \{0\}$ and $[T_\delta,  [T_\alpha, T_\beta, T_\gamma],
T_{\epsilon}] \subseteq T_{[\alpha_0]}$.
\smallskip
\item[\rm (iii)] If  $[T_\delta,T_\epsilon,[T_\alpha, T_\beta,
T_\gamma]] \neq 0$ then $\delta, \epsilon, \delta + \epsilon
\in [{\alpha_0}]\cup \{0\}$ and $[T_\delta,T_\epsilon,[T_\alpha, T_\beta,
T_\gamma]] \subseteq T_{[\alpha_0]}$.
\end{enumerate}
\end{lemma}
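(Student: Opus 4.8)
The plan is to reduce every product in (i)--(iii) to the triple products already controlled by Lemma~\ref{lema3}. The starting point is that, since $\alpha+\beta+\gamma=0$, part (v) of Lemma~\ref{lema0} gives $w:=[T_\alpha,T_\beta,T_\gamma]\subseteq T_0$; consequently all three products $[w,T_\delta,T_\epsilon]$, $[T_\delta,w,T_\epsilon]$ and $[T_\delta,T_\epsilon,w]$ are graded and lie in $T_{\delta+\epsilon}$, so in particular $\delta+\epsilon\in\Lambda^T\cup\{0\}$ whenever the product is nonzero. I would prove (iii) first and then bring (i) and (ii) into the same shape.

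For (iii) I would use that $\ad(T_\delta,T_\epsilon)$ is a derivation, i.e. expand $[T_\delta,T_\epsilon,w]=\ad(T_\delta,T_\epsilon)(w)$ by the Leibniz identity \eqref{DII} into the three terms in which $\ad(T_\delta,T_\epsilon)$ hits $T_\alpha$, $T_\beta$, $T_\gamma$ respectively. For (i) and (ii), where $w$ sits in the first or the second slot, I would instead invoke the multiplication-algebra relations \eqref{AL1_color}--\eqref{AL2_color} to rewrite $\ad(w,T_\delta)$ and $\ad(T_\delta,w)$ as a commutator in $\mathfrak{L}$ together with an $\ad(T_\gamma,-)$ term; expanding again produces triple products in which the distinguished root $\gamma\in[\alpha_0]$ appears as an outer factor. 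The key structural point is that $\gamma$ is always a nonzero root of $[\alpha_0]$, so a factor lying in $[\alpha_0]$ is available in every expansion (and when $\alpha=0$ one has $\beta=-\gamma\in[\alpha_0]$ as well).

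From here the argument is a twofold use of Lemma~\ref{lema3}. In each surviving expansion term the untouched factors of $w$ still include a nonzero root of $[\alpha_0]$, so applying the appropriate part of Lemma~\ref{lema3} to the \emph{outer} bracket and using $\alpha+\beta+\gamma=0$ forces the total weight $\delta+\epsilon$ into $[\alpha_0]\cup\{0\}$ and places the whole product in $T_{\delta+\epsilon}$; this lies in $V_{[\alpha_0]}\subseteq T_{[\alpha_0]}$ when $\delta+\epsilon\neq0$ and reduces to a defining generator of $T_{0,[\alpha_0]}$ when $\delta+\epsilon=0$. To obtain the individual memberships $\delta,\epsilon\in[\alpha_0]\cup\{0\}$, I would note that the nonvanishing of the product forces at least one inner product $[T_\delta,T_\epsilon,T_\rho]$, $\rho\in\{\alpha,\beta,\gamma\}$, to be nonzero; when this $\rho$ is a nonzero root (hence in $[\alpha_0]$), the relevant part of Lemma~\ref{lema3} gives $\delta,\epsilon\in[\alpha_0]\cup\{0\}$ at once.

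The hard part is the degenerate situation in which the only surviving inner product is $[T_\delta,T_\epsilon,T_0]$, which occurs precisely when $\alpha$ or $\beta$ vanishes and $[T_\delta,T_\epsilon,T_{\pm\gamma}]=0$ --- a possibility peculiar to the color case, where $[T_0,T_0,T_0]$ need not be zero (Remark~\ref{r1}). Here Lemma~\ref{lema3} yields no direct information about $\delta$ and $\epsilon$, and one must recover them from the weight of the sum: from $\ad(T_\delta,T_\epsilon)\neq0$ and (i) of Lemma~\ref{lema0} one gets $\delta+\epsilon\in\Lambda^{\mathfrak L}$, and since the outer step already gave $\delta+\epsilon\in[\alpha_0]$, the set $\{\delta,\epsilon,0\}$ is a connection from $\delta$ to $\delta+\epsilon$ and $\{\epsilon,\delta,0\}$ one from $\epsilon$ to $\delta+\epsilon$, whence $\delta\sim\delta+\epsilon\sim\alpha_0$ and likewise for $\epsilon$ by Proposition~\ref{equivalence}, using the stability of $[\alpha_0]$ under $\rho\mapsto-\rho$ and $\rho\mapsto\rho\Phi^k$. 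The residual corner case $\delta+\epsilon=0$ has to be handled separately through this symmetry. It is this bookkeeping over vanishing subsums, rather than any single estimate, that constitutes the delicate part of the proof, and it mirrors the corresponding statement in \cite{3L}.
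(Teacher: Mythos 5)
Your treatment of (i) and (ii) is sound and is in substance the paper's own argument: unwinding $\ad([T_\alpha,T_\beta,T_\gamma],T_\delta)$ through \eqref{AL1_color}--\eqref{AL2_color} is just the Leibniz identity \eqref{DII} applied with the pair $(T_\alpha,T_\beta)$ acting as the derivation, and it yields, e.g.\ for (i),
\[
[[T_\alpha,T_\beta,T_\gamma],T_\delta,T_\epsilon]\subseteq[T_\alpha,T_\beta,[T_\gamma,T_\delta,T_\epsilon]]+[T_\gamma,[T_\alpha,T_\beta,T_\delta],T_\epsilon]+[T_\gamma,T_\delta,[T_\alpha,T_\beta,T_\epsilon]],
\]
where \emph{every} inner product carries a factor indexed by a nonzero root of $[\alpha_0]$ ($\gamma$ in the first term, one of $\alpha,\beta$ in the others, and these cannot both vanish since $\alpha+\beta=-\gamma\neq 0$); two applications of Lemma \ref{lema3} then finish, with no degenerate case.

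The genuine gap is in (iii), and it is created by your choice of expansion. Taking $\ad(T_\delta,T_\epsilon)$ as the acting derivation gives $[T_\delta,T_\epsilon,w]\subseteq[[T_\delta,T_\epsilon,T_\alpha],T_\beta,T_\gamma]+[T_\alpha,[T_\delta,T_\epsilon,T_\beta],T_\gamma]+[T_\alpha,T_\beta,[T_\delta,T_\epsilon,T_\gamma]]$, and when $\alpha=0$ (so $\beta=-\gamma$) the surviving term may be $[[T_\delta,T_\epsilon,T_0],T_{-\gamma},T_\gamma]$, whose inner product carries no $[\alpha_0]$-root. Your connection $\{\delta,\epsilon,0\}$ does repair the subcase $\delta+\epsilon\neq 0$, but the corner case $\epsilon=-\delta$, which you dismiss with ``handled separately through this symmetry'', is precisely where the argument breaks: $\{\delta,-\delta,0\}$ is not a connection, since $\delta\dotplus(-\delta)=\theta_\delta$ and $\theta_\delta\dotplus 0=0\notin\pm\Lambda^T$; the alternative $\{\delta,-\delta,\gamma\}$ requires $\sum_{k}T(\delta\Phi^k,\gamma\Phi^k)\neq 0$, which you do not know, because $[[T_\delta,T_{-\delta},T_0],T_{-\gamma},T_\gamma]\neq 0$ is \emph{not} among the generating products of $T(\delta,\gamma)$ (those have factor sets of the form $\{T_{\pm\delta},T_{\mp\delta},T_{\pm\gamma}\}$, $\{T_{0,\delta},T_{\pm\gamma},T_0\}$, $\{T_{0,\delta},T_{0,\gamma},T_0\}$ and their swaps, never $T_{0,\delta}$ together with two $T_{\pm\gamma}$ factors); and you cannot invoke Lemma \ref{lema52}, since its proof relies on part (iii) of the present lemma. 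Closing this corner needs two further Leibniz re-expansions (first with acting pair in $T_\delta\times T_{-\delta}$, then with pair in $T_0\times T_{-\gamma}$), reducing it to $[T_\delta,T_{-\delta},T_{\pm\gamma}]$-terms, where Lemma \ref{lema1}(ii) applies, and $[T_0,T_{-\gamma},T_{\pm\delta}]$-terms, where Lemma \ref{lema3}(ii) applies. The paper's ``(ii) and (iii) are proved analogously'' avoids all of this: one expands (iii) as well with $(T_\alpha,T_\beta)$ acting, namely $[T_\delta,T_\epsilon,w]\subseteq[T_\alpha,T_\beta,[T_\delta,T_\epsilon,T_\gamma]]+[[T_\alpha,T_\beta,T_\delta],T_\epsilon,T_\gamma]+[T_\delta,[T_\alpha,T_\beta,T_\epsilon],T_\gamma]$, so that every term again has a nonzero $[\alpha_0]$-root in the right slot and the degenerate situation never arises.
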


\begin{proof}
We only prove the part (i), the parts (ii) and (iii) are proved analogously.

By the Leibniz identity, we have 
\[[[T_\alpha, T_\beta,
T_\gamma],T_\delta,T_\epsilon] \subseteq [T_\alpha, T_\beta, [T_\gamma, T_\delta, T_\epsilon]] + [T_\gamma, [T_\alpha, T_\beta, T_\delta],T_\epsilon] + [T_\gamma, T_\delta, [T_\alpha, T_\beta, T_\epsilon]],\]
so at least one of the spaces on the right side is nonzero. Consider three cases:

\begin{enumerate}
\item[\rm(1)] Suppose that $[T_\alpha, T_\beta, [T_\gamma, T_\delta, T_\epsilon]] \neq 0.$ Since  $[T_\gamma, T_\delta, T_\epsilon] \neq 0$ and $\gamma \neq 0$, by the previous lemma $\delta, \epsilon \in [\alpha_0] \cup \{0\}$. By the lemma \ref{lema0}, $[T_\alpha, T_\beta, [T_\gamma, T_\delta, T_\epsilon]] \subseteq [T_\alpha, T_\beta, T_{\gamma+\delta+\epsilon}].$ Since at least one of the $\alpha, \beta$ is nonzero, we may apply the previous lemma once more to get $\alpha + \beta + \gamma + \delta + \epsilon = \delta + \epsilon \in [\alpha_0] \cup \{0\}.$

Moreover, if $\delta + \epsilon \neq 0,$ then the whole product lies in $T_{\delta + \epsilon} \subseteq V_{[\alpha_0]},$ and if $\delta + \epsilon = 0,$ then the whole product lies in $[T_\alpha,T_\beta,T_\gamma] \subseteq T_{0,[\alpha_0]}.$

\item[\rm(2)] Now suppose that $[T_\gamma, [T_\alpha, T_\beta, T_\delta],T_\epsilon] \neq 0.$ Since $[T_\alpha, T_\beta, T_\delta]$ and at least one of $\alpha, \beta$ is nonzero, the previous lemma implies that $\delta, \alpha + \beta + \delta \in [\alpha_0] \cup \{0\}.$ By the lemma \ref{lema0}, $[T_\gamma, [T_\alpha, T_\beta, T_\delta],T_\epsilon] \subseteq [T_\gamma, T_{\alpha + \beta +\delta}, T_\epsilon].$ Since $\gamma \neq 0,$ the previous lemma implies that $\epsilon \in [\alpha_0] \cup \{0\}, \alpha + \beta + \gamma + \delta + \epsilon = \delta + \epsilon \in [\alpha_0] \cup \{0\}.$

Moreover, if $\delta + \epsilon \neq 0,$ then the whole product lies in $T_{\delta + \epsilon} \subseteq V_{[\alpha_0]}.$ If $\delta = -\epsilon,$ then the whole product lies in $[T_\gamma, T_{\alpha + \beta + \delta}, T_{-\delta}] \subseteq T_{0,[\alpha_0]}$ since all roots indexing the spaces in the product are in $[\alpha_0]\cup\{0\}$ and $\gamma \neq 0.$

\item[\rm(3)] The last case in which $[T_\gamma, T_\delta, [T_\alpha, T_\beta, T_\epsilon]] \neq 0$ is completely analogous to the case (2).
\end{enumerate}
\end{proof}

\medskip

\begin{lemma}
\label{lema52}
Let $\alpha \in [\alpha_0], \delta, \epsilon \in \Lambda^T \cup \{0\},$ and $k \in \mathbb{N}.$ Then the following assertions hold:

\begin{enumerate}
\item[\rm(i)] If $[T_{0,\alpha}^{(k)},T_\delta,T_\epsilon] \neq 0 $ then $\delta, \epsilon,
\delta + \epsilon \in [{\alpha_0}]\cup \{0\}$ and $[T_{0,\alpha}^{(k)},T_\delta,T_\epsilon] \subseteq T_{[\alpha_0]}$.
\smallskip
\item[\rm(ii)] If  $[T_\delta,T_{0,\alpha}^{(k)},
T_{\epsilon}] \neq 0$ then $\delta, \epsilon, \delta  +
\epsilon \in [{\alpha_0}] \cup \{0\}$ and $[T_\delta,T_{0,\alpha}^{(k)},
T_{\epsilon}] \subseteq T_{[\alpha_0]}$.
\smallskip
\item[\rm (iii)] If  $[T_\delta,T_\epsilon,T_{0,\alpha}^{(k)}] \neq 0$ then $\delta, \epsilon, \delta + \epsilon
\in [{\alpha_0}]\cup \{0\}$ and $[T_\delta,T_\epsilon,T_{0,\alpha}^{(k)}] \subseteq T_{[\alpha_0]}$.
\end{enumerate}

\end{lemma}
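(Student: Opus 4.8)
The plan is to prove all three parts by the same technique used in the previous lemma, namely induction on $k$ combined with the Leibniz identity (\ref{DII}), reducing each product involving $T_{0,\alpha}^{(k)}$ to products we have already controlled. I would handle part (i) in detail and indicate that (ii) and (iii) follow by the same mechanism, exactly as in the proofs above. The base case $k=0$ is precisely Lemma \ref{lema51}, since $T_{0,\alpha}^{(0)} = [T_\alpha,T_{-\alpha},T_0]$ and any such space is of the form $[T_{\alpha'},T_{\beta'},T_{\gamma'}]$ with $\alpha'+\beta'+\gamma'=0$, $\gamma'\neq 0$, and all roots in $[\alpha_0]\cup\{0\}$ after relabeling; so the conclusion $\delta,\epsilon,\delta+\epsilon \in [\alpha_0]\cup\{0\}$ and containment in $T_{[\alpha_0]}$ is immediate.

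\textbf{Inductive step.}
Assume the three assertions hold for $k-1$. For part (i), recall $T_{0,\alpha}^{(k)} = [T_{0,\alpha}^{(k-1)},T_0,T_0]$, so a nonzero product $[T_{0,\alpha}^{(k)},T_\delta,T_\epsilon]$ has the form $[[T_{0,\alpha}^{(k-1)},T_0,T_0],T_\delta,T_\epsilon]$. Applying the Leibniz identity (\ref{DII}) to the outer bracket (viewing $[T_{0,\alpha}^{(k-1)},T_0,\cdot]$ as the inner operator acting on the last slot) expands this into a sum of three terms of the shape
\[
[T_{0,\alpha}^{(k-1)},T_0,[T_0,T_\delta,T_\epsilon]] + [T_0,[T_{0,\alpha}^{(k-1)},T_0,T_\delta],T_\epsilon] + [T_0,T_\delta,[T_{0,\alpha}^{(k-1)},T_0,T_\epsilon]],
\]
so at least one summand is nonzero. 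In each summand the inner triple product is of the form $[T_{0,\alpha}^{(k-1)},T_0,T_\rho]$ for $\rho \in \{\delta,\epsilon\}\cup\{0\}$; by the inductive hypothesis (applied to the appropriate one of (i)--(iii) at level $k-1$) this forces the relevant roots into $[\alpha_0]\cup\{0\}$ and lands in $T_{0,\alpha'}^{(k)}$ or $V_{[\alpha_0]}$. One then applies Lemma \ref{lema0}(v) to collapse each inner product into a single root space $T_{\rho'}$ and invokes Lemma \ref{lema3} once more on the resulting outer product to conclude $\delta,\epsilon,\delta+\epsilon \in [\alpha_0]\cup\{0\}$, together with containment in $T_{[\alpha_0]}$. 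Here the containment branches exactly as in Lemma \ref{lema51}: if $\delta+\epsilon \neq 0$ the product sits in $T_{\delta+\epsilon}\subseteq V_{[\alpha_0]}$, while if $\delta+\epsilon=0$ it sits in a zero-root product belonging to $T_{0,[\alpha_0]}$.

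\textbf{Main obstacle.}
The delicate point will be bookkeeping the membership in $T_{0,[\alpha_0]}$ in the case $\delta+\epsilon=0$, since $T_{0,\alpha}^{(k)}$ for $k\geq 1$ is \emph{not} literally of the form $[T_\beta,T_\gamma,T_\delta]$ with zero root sum; rather it is an iterated bracket. I would address this by noting that the definition (\ref{t0[alpha]}) of $T_{0,[\alpha_0]}$ already contains the summand $\sum_{\beta\in[\alpha_0]}T_{0,\beta}$, and that $T_{0,\alpha}^{(k)}\subseteq T_{0,\alpha}\subseteq T_{0,[\alpha_0]}$ by construction whenever $\alpha\in[\alpha_0]$. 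Hence any zero-root product reducing back to a space of the form $T_{0,\alpha'}^{(j)}$ with $\alpha'\in[\alpha_0]$ is automatically inside $T_{0,[\alpha_0]}$, which resolves the containment without needing to re-expand the iterated bracket. The root-tracking (the $\dotplus$ connectivity) is routine once Lemma \ref{lema3} and the inductive hypothesis are in place; the genuine care is in routing each of the three Leibniz summands to the correct clause among (i)--(iii) at level $k-1$, which is why proving all three parts simultaneously by a single induction is cleaner than proving them one at a time.
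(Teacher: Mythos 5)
Your inductive skeleton (Leibniz identity expansion, routing each summand to the appropriate clause at level $k-1$, and splitting the containment according to $\delta+\epsilon\neq 0$ versus $\delta+\epsilon=0$) matches the paper's proof, and your observation that $T_{0,\alpha}^{(k)}\subseteq T_{0,\alpha}\subseteq T_{0,[\alpha_0]}$ resolves the containment bookkeeping is correct. However, there is a genuine gap at the base case. You claim $k=0$ ``is precisely Lemma \ref{lema51}'' after relabeling, but this fails: $T_{0,\alpha}^{(0)}=[T_\alpha,T_{-\alpha},T_0]$ has the root $0$ in the \emph{third} slot, while Lemma \ref{lema51} requires $\gamma\in[\alpha_0]$, i.e.\ the third root nonzero. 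In a Leibniz color $3$-algebra there is no (anti)symmetry of the triple product, so you cannot permute the arguments to move the zero root out of the third slot. This is not a technicality: products with $T_0$ in ``bad'' positions are exactly what Lemma \ref{lema51} cannot see, and it is the reason the spaces $T_{0,\alpha}$ and $T(\alpha,\beta)$ were introduced at all (cf.\ Remark \ref{r1}: in the color setting one cannot assume $[T_0,T_0,T_0]=0$). The paper therefore proves the base case by a separate argument, expanding $[[T_\alpha,T_{-\alpha},T_0],T_\delta,T_\epsilon]$ by the Leibniz identity and analyzing three cases.

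A second, related omission: you call the root-tracking ``routine once Lemma \ref{lema3} and the inductive hypothesis are in place,'' but in the degenerate sub-cases this is false. When both $\delta,\epsilon\neq 0$ and $\delta=-\epsilon$, or when $\delta=0$ and $\epsilon\neq 0$, neither Lemma \ref{lema3} nor the inductive hypothesis yields $\alpha\sim\delta$ (resp.\ $\alpha\sim\epsilon$); the paper gets connectivity there by recognizing that the nonzero product witnesses $T(\alpha,\delta)\neq 0$ (e.g.\ $[T_{0,\alpha}^{(k)},T_0,T_{0,\delta}^{(1)}]\neq 0$) and invoking Lemma \ref{lema1}(ii). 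Likewise the containment $[T_0,[T_\alpha,T_{-\alpha},T_0],T_0]\subseteq T_{0,\alpha}^{(0)}+T_{0,\alpha}^{(1)}$ needs Lemma \ref{t0alpha_prop}, not just the definition of $T_{0,[\alpha_0]}$. To repair your proof you must add the base-case analysis and, in each sub-case where the roots $\delta,\epsilon$ cancel or vanish, appeal explicitly to the $T(\alpha,\beta)$-mechanism of Lemma \ref{lema1}(ii).
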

\begin{proof}
We use induction on $k$ and begin with $k = 0.$ Consider, for example the point (i): let $\alpha \in [\alpha_0]$ and let $[[T_{\alpha},T_{-\alpha},T_0],T_{\delta},T_{\epsilon}] \neq 0.$ As in the previous lemma, apply the  Leibniz identity to this product:
\[[[T_{\alpha},T_{-\alpha},T_0],T_{\delta},T_{\epsilon}] \subseteq [T_\alpha, T_{-\alpha}, [T_0, T_\delta, T_\epsilon]] + [T_0,[T_\alpha,T_{-\alpha},T_\delta],T_\epsilon] + [T_0,T_\delta,[T_\alpha,T_{-\alpha},T_\epsilon]]\]
and consider three cases:
\begin{enumerate}
    \item[\rm(1)] Suppose that $[T_\alpha, T_{-\alpha}, [T_0, T_\delta, T_\epsilon]] \neq 0.$ By lemmas \ref{lema0}, \ref{lema3} we have 
\[[T_\alpha, T_{-\alpha}, [T_0, T_\delta, T_\epsilon]] \subseteq [T_{\alpha},T_{-\alpha},T_{\delta+\epsilon}] \subseteq T_{\delta + \epsilon}\]
and $\delta + \epsilon \in [\alpha_0]\cup\{0\}.$ If any of $\delta, \epsilon$ is zero, then we are done. Suppose then that both $\delta, \epsilon \neq 0.$ Since $[T_0,T_\delta,T_\epsilon] \neq 0,$ the lemma \ref{lema3} implies that $\delta \sim \epsilon.$ If $\delta + \epsilon \neq 0,$ then $\delta \sim \delta + \epsilon \sim \alpha_0.$ If $\delta = -\epsilon,$ then we have
\[0 \neq [[T_\alpha, T_{-\alpha}, [T_0, T_\delta, T_{-\delta}]],\]
which implies $\alpha \sim \delta$ by (iii) of the previous lemma and we are done.

Moreover, if $\delta + \epsilon \neq 0,$ then the whole product lies in $T_{\delta + \epsilon} \subseteq V_{[\alpha_0]},$ and if $\delta + \epsilon = 0,$ then the whole product lies in $[T_\alpha,T_{-\alpha},T_0] \subseteq T_{0,[\alpha_0]}.$

\item[\rm(2)] Suppose now that $[T_0,[T_\alpha,T_{-\alpha},T_\delta],T_\epsilon] \neq 0.$ This implies that $[T_\alpha,T_{-\alpha},T_\delta] \neq 0$ and by the lemma \ref{lema3} $\delta \in [\alpha_0] \cup \{0\}.$ By the lemma \ref{lema0}, $[T_0,T_\delta,T_\epsilon] \neq 0.$ If $\delta \neq 0,$ therefore the lemma \ref{lema3} implies that $\epsilon, \delta + \epsilon \in [\delta]\cup\{0\} = [\alpha_0] \cup \{0\}$ and we are done. If $\delta = 0, \epsilon \neq 0,$ then the condition $[T_0,[T_\alpha,T_{-\alpha},T_0],T_\epsilon] \neq 0$ implies that $T(\alpha,\epsilon) \neq 0$ and $\alpha \sim \epsilon$ by lemma \ref{lema1}. If $\delta = \epsilon = 0,$ then there is nothing to prove.

If at least one of $\delta, \epsilon$ is nonzero, then the whole product lies in $[T_0,T_\delta,T_\epsilon] \subseteq T_{\delta +\epsilon} \subseteq V_{[\alpha_0]}$ if $\delta + \epsilon \neq 0$ or in $T_{0,[\alpha_0]}$ if $\delta + \epsilon = 0.$  If $\delta = \epsilon = 0$ then the product lies in $[T_0,[T_\alpha, T_{-\alpha},T_0],T_0] \subseteq T_{0,\alpha}^{(0)} + T_{0,\alpha}^{(1)} \subseteq T_{0,[\alpha_0]}$ by the lemma \ref{t0alpha_prop}.

\item[\rm(3)] The last case, where $[T_0,T_\delta,[T_\alpha,T_{-\alpha},T_\epsilon]] \neq 0$ is completely analogous to the case (2).
\end{enumerate}

\medskip

Now suppose that the lemma holds for all $i \leq k$ and consider it for $k+1.$ Consider, for example, the case (iii): let $[T_\delta, T_\epsilon, T_{0,\alpha}^{(k+1)}] \neq 0.$ Writing $T_{0,\alpha}^{(k+1)}$ as $[T_{0,\alpha}^{(k)},T_0,T_0]$ and applying the Leibniz identity to the product, we get
\[[T_\delta, T_\epsilon, T_{0,\alpha}^{(k+1)}] \subseteq [T_{0,\alpha}^{(k)},T_0,[T_\delta,T_\epsilon,T_0]] + [[T_{0,\alpha}^{(k)},T_0,T_\delta],T_\epsilon,T_0] + [T_\delta,[T_{0,\alpha}^{(k)},T_0,T_\epsilon],T_0].\]
Therefore, one of the spaces on the right is nonzero. We can analyze the cases separately, supposing that each of the spaces is nonzero. This analysis is very similar to the one at the base of induction.

Suppose, for example, that $[T_{0,\alpha}^{(k)},T_0,[T_\delta,T_\epsilon,T_0]] \neq 0.$ By lemmas \ref{lema0}, \ref{lema3} we have 
\[[T_{0,\alpha}^{(k)},T_0,[T_\delta,T_\epsilon,T_0]] \subseteq [T_{0,\alpha}^{(k)},T_0,T_{\delta+\epsilon}] \subseteq T_{\delta+\epsilon}\]
and by induction we have $\delta + \epsilon \subseteq [\alpha_0]\cup\{0\}.$ If any of $\delta, \epsilon$ is zero, then there is nothing more to prove. Suppose that both $\delta, \epsilon \neq 0.$ Since $[T_\delta,T_\epsilon,T_0] \neq 0,$ the lemma \ref{lema3} implies that $\delta \sim \epsilon.$ If $\delta + \epsilon \neq 0,$ then $\delta \sim \delta + \epsilon \sim \alpha_0$ and we are done. If $\delta = -\epsilon,$ then we have
\[0\neq [T_{0,\alpha}^{(k)},T_0,[T_\delta,T_{-\delta},T_0]] = [T_{0,\alpha}^{(k)},T_0,T_{0,\delta}^{(1)}],\]
which implies that $T(\alpha,\delta) \neq 0$ and $\alpha \sim \delta$ by lemma \ref{lema1}.

Moreover, if $\delta + \epsilon \neq 0,$ then the whole product lies in $T_{\delta + \epsilon} \subseteq V_{[\alpha_0]},$ and if $\delta + \epsilon = 0,$ then the whole product lies in $[T_{0,\alpha}^{(k)},T_0,T_0]  = T_{0,\alpha}^{(k+1)} \subseteq T_{0,[\alpha_0]}.$

Other nonzero products can be analyzed similarly.
\end{proof}



Let us denote
\[
T_{0,\Lambda^T}:= \sum\limits_{\tiny{\begin{array}{c}
  \alpha + \beta + \gamma=0 \\
\alpha, \beta \in
\Lambda^{T} \cup \{0\}\\
\gamma \in
\Lambda^{T} \end{array}}}[{T}_{\alpha},{T}_{\beta},{T}_{\gamma}] + \sum_{\alpha \in \Lambda^T}T_{0,\alpha}.
\]

Now we can prove the main results of this section:

\begin{theorem}\label{teo1}
Let $(T,\phi)$ be a Leibniz color $3$-algebra $T$ with an automorphism with root spaces decomposition $T= T_{0}\oplus \big(\bigoplus_{\alpha \in
\Lambda^T}T_{\alpha }\big)$. 
\begin{enumerate}
\item[{\rm (i)}]
For any $\alpha_{0} \in \Lambda^T$, the space
$T_{[{\alpha_0}]}$ is an ideal of $T$.
\smallskip
\item[{\rm (ii)}] If $T$ is simple, then any two roots in $\Lambda^T$ are connected. 
\end{enumerate}
\end{theorem}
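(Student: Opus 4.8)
The plan is to prove the two parts separately, in both cases leaning entirely on the three technical lemmas already established about products of root spaces and of the $0$-root subspaces, namely Lemmas \ref{lema3}, \ref{lema51} and \ref{lema52}. For part (i), I would first record that $T_{[\alpha_0]}=T_{0,[\alpha_0]}\oplus V_{[\alpha_0]}$ is $\phi$-invariant. Since $\beta\in[\alpha_0]$ forces $\beta\Phi^{\pm1}\in[\alpha_0]$, Lemma \ref{lema0}(vi) gives $\phi(V_{[\alpha_0]})\subseteq V_{[\alpha_0]}$; and because $\phi$ is an automorphism it carries each generator $[T_\beta,T_\gamma,T_\delta]$ (respectively each $T_{0,\beta}^{(k)}$) of $T_{0,[\alpha_0]}$ to a generator of the same type with all roots shifted by $\Phi^{-1}$, so $\phi(T_{0,[\alpha_0]})\subseteq T_{0,[\alpha_0]}$. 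Then, using trilinearity together with the decomposition $T=T_0\oplus\bigoplus_\alpha T_\alpha$, I would reduce the ideal condition $[T_{[\alpha_0]},T,T]+[T,T_{[\alpha_0]},T]+[T,T,T_{[\alpha_0]}]\subseteq T_{[\alpha_0]}$ to triple products in which exactly one argument is a homogeneous generator of $T_{[\alpha_0]}$.

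The bookkeeping then splits according to which generator occupies the distinguished slot. If that argument lies in some $T_\beta$ with $\beta\in[\alpha_0]$ (that is, in $V_{[\alpha_0]}$), the three positional cases are precisely Lemma \ref{lema3}(i)--(iii), which place the product inside $T_{[\alpha_0]}$. If it lies in the first summand of $T_{0,[\alpha_0]}$, i.e.\ in a product $[T_\beta,T_\gamma,T_\delta]$ with $\beta+\gamma+\delta=0$, roots in $[\alpha_0]\cup\{0\}$ and $\delta\neq 0$, then Lemma \ref{lema51}(i)--(iii) applies verbatim. Finally, if it lies in some $T_{0,\beta}^{(k)}$ with $\beta\in[\alpha_0]$, Lemma \ref{lema52}(i)--(iii) applies. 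As these exhaust the homogeneous generators of $T_{[\alpha_0]}$, part (i) follows with no new computation beyond organizing the case analysis.

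For part (ii), suppose $T$ is simple and, toward a contradiction, that $\Lambda^T/\!\sim$ has at least two classes. Fix $\alpha_0$; by part (i), $T_{[\alpha_0]}$ is a $\phi$-invariant ideal, and it is nonzero since $T_{\alpha_0}\subseteq V_{[\alpha_0]}$. By Definition \ref{Defsimple}, $T_{[\alpha_0]}\in\{J,T\}$. If $T_{[\alpha_0]}=T$, then comparing the nonzero-root components of $T=T_0\oplus\bigoplus_\alpha T_\alpha$ with those of $T_{[\alpha_0]}=T_{0,[\alpha_0]}\oplus\bigoplus_{\beta\in[\alpha_0]}T_\beta$ (recall $T_{0,[\alpha_0]}\subseteq T_0$ by Lemma \ref{lema0}(v) and Lemma \ref{t0alpha_prop}) forces $[\alpha_0]=\Lambda^T$, a single class, contradicting our assumption. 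Hence $T_{[\alpha_0]}=J$ for every class. But then, for distinct classes $[\alpha_0]\neq[\beta_0]$, one gets $T_{[\alpha_0]}=J=T_{[\beta_0]}$; projecting these equal ideals onto $\bigoplus_{\alpha\neq 0}T_\alpha$ along $T_0$ in the direct sum \eqref{rootdeco} yields $V_{[\alpha_0]}=V_{[\beta_0]}$, whence $[\alpha_0]=[\beta_0]$, a contradiction. Therefore $\Lambda^T$ consists of a single connectivity class, so any two roots are connected.

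I expect the genuine difficulty to be concentrated not in this theorem but in the three lemmas it invokes, so that part (i) amounts to a completeness check of the case analysis. The one genuinely delicate point is part (ii): the definition of simplicity admits the nonzero ideal $J$, so a nonzero ideal need not equal $T$. The argument must therefore exclude $T_{[\alpha_0]}=J$ for more than one class, which I would handle by the root-component comparison above rather than through any intrinsic property of $J$; I would take care that the projection step is legitimate precisely because the root space sum \eqref{rootdeco} is direct and $V_{[\alpha_0]}$ records exactly the roots of the class $[\alpha_0]$.
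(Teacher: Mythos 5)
Your proposal is correct and follows essentially the same route as the paper: part (i) is argued exactly as there (first the $\phi$-invariance of $V_{[\alpha_0]}$ and of $T_{0,[\alpha_0]}$ via the closure of each connectivity class under $\Phi^{\pm 1}$, then the ideal condition delegated case-by-case to Lemmas \ref{lema3}, \ref{lema51} and \ref{lema52}). For part (ii), which the paper dismisses as obvious, your argument --- $T_{[\alpha_0]}$ is a nonzero $\phi$-invariant ideal, hence equals $J$ or $T$ by Definition \ref{Defsimple}; the case $T_{[\alpha_0]}=T$ forces a single class, while $T_{[\alpha_0]}=J=T_{[\beta_0]}$ for two distinct classes is excluded by projecting onto the nonzero-root part of the direct decomposition \eqref{rootdeco} --- is precisely the standard completion of that step and correctly handles the subtlety that $J$ is an admissible nonzero ideal.
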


\begin{proof}
(i) The spaces $T_{[\alpha]}$ are $\mathbb{G}$-homogeneous by construction. Recall that by definition of connectivity, any root $\beta \in \Lambda^T$ is connected to all $\beta\Phi^k, k \in \mathbb{Z}.$ Therefore, the lemma \ref{lema0} implies that for $\alpha \in [\alpha_0]$
\begin{equation}
\label{phi_root_eqv}
\phi^{\pm1}(T_\alpha) = T_{\alpha\Phi^{\mp1}} \subseteq V_{[\alpha_0]},
\end{equation}
thus $\phi^{\pm1}(V_{[\alpha_0]}) = V_{[\alpha_0]}.$
Moreover, the equation (\ref{phi_root_eqv}) and the definition of the space $T_{0,[\alpha_0]}$ easily imply that $\phi(T_{0,[\alpha_0]}) = T_{0,[\alpha_0]}.$ Hence, $\phi(T_{[\alpha_0]}) = T_{[\alpha_0]}.$ The rest of the  proof of the part (i) follows directly from lemmas \ref{lema3}, \ref{lema51}, \ref{lema52}. The part (ii) is obvious.
\end{proof}

\begin{theorem} \label{teo2}
If $\mathcal{U}$ is a vector space complement of $T_{0,\Lambda^T}$, then
$T = \mathcal{U} \oplus \sum_{[\alpha] \in \Lambda^T/\sim} T_{[\alpha]}$.
Moreover, $[T, T_{[\alpha]},T_{[\beta]}] + [T_{[\alpha]},T,T_{[\beta]}] + [T_{[\alpha]},T_{[\beta]}, T]= 0$,
whenever $[\alpha] \neq [\beta]$.
\end{theorem}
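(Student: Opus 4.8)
The plan is to establish the two claims of Theorem \ref{teo2} separately. For the direct sum decomposition, I would start from the root spaces decomposition $T = T_0 \oplus \big(\bigoplus_{\alpha \in \Lambda^T}T_\alpha\big)$ and split the work according to the two summands $T_0$ and $\bigoplus_\alpha T_\alpha$. First I would observe that the quotient set $\Lambda^T/\sim$ partitions $\Lambda^T$ into disjoint classes, so that $\bigoplus_{\alpha \in \Lambda^T}T_\alpha = \sum_{[\alpha]}V_{[\alpha]}$ with the sum direct, by \eqref{v[alpha]} and the disjointness of the classes. Thus every $V_{[\alpha]}$ component of $T_{[\alpha]}$ is accounted for. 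For the $T_0$ part, I would use that by definition $T_{0,[\alpha]} \subseteq T_{0,\Lambda^T} \subseteq T_0$ for every class, so that $\sum_{[\alpha]}T_{0,[\alpha]} \subseteq T_{0,\Lambda^T}$; choosing $\mathcal{U}$ to be a vector space complement of $T_{0,\Lambda^T}$ inside $T_0$ then gives $T_0 = \mathcal{U} + \sum_{[\alpha]}T_{0,[\alpha]}$. Combining these, $T = \mathcal{U} + \sum_{[\alpha]\in\Lambda^T/\sim}T_{[\alpha]}$, and the directness of the $V_{[\alpha]}$ sum together with $\mathcal{U}$ being a complement of $T_{0,\Lambda^T}$ yields the direct sum claim.

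The second and more substantial claim is that $[T,T_{[\alpha]},T_{[\beta]}] + [T_{[\alpha]},T,T_{[\beta]}] + [T_{[\alpha]},T_{[\beta]},T] = 0$ whenever $[\alpha]\neq[\beta]$. The approach here is to expand each $T_{[\gamma]}$ as $T_{0,[\gamma]}\oplus V_{[\gamma]}$ and reduce to products of the generating pieces, namely the root spaces $T_\mu$ (including $T_0$, treating $T$ via its root decomposition) and the components of $T_{0,[\gamma]}$ coming from \eqref{t0[alpha]}. The key idea is that by Theorem \ref{teo1}(i), each $T_{[\gamma]}$ is an ideal, so any triple product in which one argument lies in $T_{[\alpha]}$ already lands in $T_{[\alpha]}$, and any triple product in which one argument lies in $T_{[\beta]}$ already lands in $T_{[\beta]}$. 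A product of the form $[T,T_{[\alpha]},T_{[\beta]}]$ therefore lies in $T_{[\alpha]}\cap T_{[\beta]}$. Since $[\alpha]\neq[\beta]$ are distinct equivalence classes, I would show $T_{[\alpha]}\cap T_{[\beta]} = 0$: the $V$-parts are disjoint by the disjointness of the classes, and the $T_0$-parts can only intersect through products indexed by roots that would force a connection between a root of $[\alpha]$ and a root of $[\beta]$, contradicting $[\alpha]\neq[\beta]$.

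The main obstacle, and where I would focus the careful argument, is ruling out nonzero contributions landing in the $T_0$ intersection $T_{0,[\alpha]}\cap T_{0,[\beta]}$. Unlike the $V$-parts, the spaces $T_{0,[\alpha]}$ live inside the common $0$-root space $T_0$, so disjointness is not automatic and must be extracted from connectivity. The precise tool is Lemma \ref{lema3}, together with Lemmas \ref{lema51} and \ref{lema52}: whenever a triple product such as $[T_\mu, T_{\alpha'}, T_{\beta'}]$ with $\alpha'\in[\alpha]$ and $\beta'\in[\beta]$ is nonzero, these lemmas force all the roots appearing to lie in a single connectivity class. Concretely, a nonzero product involving a root from $[\alpha]$ and a root from $[\beta]$ would produce a connection $\alpha'\sim\beta'$, hence $[\alpha]=[\beta]$, contradicting our hypothesis. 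Running through the (finitely many, by symmetry) cases of which slot each argument occupies, I would conclude that every summand in $[T,T_{[\alpha]},T_{[\beta]}] + [T_{[\alpha]},T,T_{[\beta]}] + [T_{[\alpha]},T_{[\beta]},T]$ must vanish, completing the proof.
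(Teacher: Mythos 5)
Your first part is essentially fine (the paper treats it as obvious), but your proof of the orthogonality relation rests on a claim that is both unjustified and strictly stronger than what the theorem asserts. You argue that, since each $T_{[\gamma]}$ is an ideal by Theorem \ref{teo1}(i), any product such as $[T,T_{[\alpha]},T_{[\beta]}]$ lies in $T_{[\alpha]}\cap T_{[\beta]}$, and you then want to prove this intersection vanishes. The $V$-parts do intersect trivially, so everything reduces to $T_{0,[\alpha]}\cap T_{0,[\beta]}$ inside $T_0$; but your justification for the vanishing of this intersection (``products indexed by roots of both classes would force a connection'') is a non sequitur. An element of the intersection is merely a vector that is \emph{separately} a linear combination of products indexed by $[\alpha]\cup\{0\}$ and a linear combination of products indexed by $[\beta]\cup\{0\}$; no cross product between the two families occurs, so the connection machinery, which only controls when products are nonzero, says nothing about such an element. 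Note that the theorem deliberately writes $\sum_{[\alpha]}T_{[\alpha]}$ and not $\bigoplus$: disjointness of the ideals is claimed only in the Corollary that follows, under the \emph{additional} hypotheses $T_0=T_{0,\Lambda^T}$ and $\ann(T)=0$, and there the logical order is reversed -- one first has the orthogonality relation of Theorem \ref{teo2} and then uses $\ann(T)=0$ to deduce directness. Without an annihilator hypothesis the ideals $T_{[\alpha]}$ and $T_{[\beta]}$ may well share nonzero elements of $T_0$, so your strategy of deducing the product relation from $T_{[\alpha]}\cap T_{[\beta]}=0$ cannot work as a foundation for the proof.

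Your fallback argument (quoting Lemmas \ref{lema3}, \ref{lema51}, \ref{lema52}) is indeed the paper's actual route, but it only disposes of the cases in which at least one argument carries a nonzero root: these lemmas treat products of a root space or a generator of $T_{0,[\alpha]}$ with root spaces $T_\delta,T_\epsilon$, $\delta,\epsilon\in\Lambda^T\cup\{0\}$. The genuinely hard case, which your case analysis never reaches, is $[T_0,T_{0,[\alpha]},T_{0,[\beta]}]$ and its permutations: here all three arguments sit inside $T_0$, so there is no visible ``root from $[\alpha]$'' and ``root from $[\beta]$'' in the product, and applying the cited lemmas (viewing, say, $T_{0,[\beta]}$ merely as a subspace of $T_0$) yields no contradiction -- only the harmless conclusion that the product lies in $T_{0,[\alpha]}$. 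The paper resolves this case by expanding the generators of $T_{0,[\alpha]}$ (products $[T_{\alpha'},T_{\beta'},T_{\gamma'}]$ with $\alpha'+\beta'+\gamma'=0$, $\gamma'\neq 0$, together with the spaces $T_{0,\alpha}^{(k)}$) via the Leibniz identity, killing all terms with a nonzero root by Lemmas \ref{lema51}, \ref{lema52}, and thereby reducing everything to products of the form $[T_0,T_{0,\alpha}^{(k)},T_{0,\beta}^{(l)}]$; if such a product were nonzero then $T(\alpha,\beta)\neq 0$, whence $\alpha\sim\beta$ by Lemma \ref{lema1}(ii), a contradiction. This is exactly the purpose for which the auxiliary spaces $T_{0,\alpha}^{(k)}$ of \eqref{t0alpha} and $T(\alpha,\beta)$ were introduced, and a complete proof must engage with them; as written, your proposal leaves this central case open.
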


\begin{proof}
The first part of the theorem is obvious, so we only prove the second part. Let $\alpha_0, \beta_0 \in \Lambda^T$ such that $\alpha_0 \not\sim \beta_0$ and consider the product $[T, T_{[\alpha_0]},T_{[\beta_0]}].$ Lemmas \ref{lema3}, \ref{lema51}, \ref{lema52} imply that 
\[[T,V_{[\alpha_0]},V_{[\beta_0]}] + [T,T_{0,[\alpha_0]},V_{[\beta_0]}] + [T,V_{[\alpha_0]},T_{0,[\beta_0]}] = 0,\]
so we only need to prove that $[T,T_{0,[\alpha]},T_{0,[\beta]}] = 0.$ Moreover, lemmas \ref{lema51}, \ref{lema52} imply that $[T_\gamma,T_{0,[\alpha]},T_{0,[\beta]}] = 0$ for any $\gamma \in \Lambda^T,$ so it suffices to prove that $[T_0,T_{0,[\alpha]},T_{0,[\beta]}] = 0.$

Let $\alpha, \beta, \gamma \in [\alpha_0]\cup\{0\},$ such that $\alpha + \beta + \gamma = 0$ and $\delta, \epsilon, \chi \in [\beta_0]\cup\{0\}$ such that $\delta + \epsilon + \chi = 0.$ Suppose that $\gamma \neq 0$ (hence, at least one of $\alpha, \beta \neq 0$) By Leibniz identity, we have
\[[T_0,[T_\alpha,T_\beta,T_\gamma],T_{0,[\beta_0]}] \subseteq [T_\alpha,T_\beta,[T_0,T_\gamma,T_{0,[\beta_0]}]] + [[T_\alpha,T_\beta,T_0],T_\gamma,T_{0,[\beta_0]}] + [T_0,T_\gamma,[T_\alpha,T_\beta,T_{0,[\beta_0]}]] = 0\]
by lemmas \ref{lema51}, \ref{lema52}.
Analogously, we can show that if $\chi \neq 0,$ then
\[[T_0,T_{0,[\alpha]},[T_\delta,T_\epsilon,T_\chi]] = 0.\]
Therefore, it suffices to show that $[T_0,T_{0,\alpha}^{(k)},T_{0,\beta}^{(l)}] = 0$
for all $\alpha \in [\alpha_0], \beta \in [\beta_0], k, l \in \mathbb{N}.$ But if any such product is nonzero, then $T(\alpha,\beta) \neq 0$ and $\alpha \sim \beta$ by lemma \ref{lema1}, a contradiction.

All other products are considered completely similarly.

\end{proof}

As an obvious corollary, we get the following result:

\begin{corollary}
If $T_0 = T_{0,\Lambda^T}$ and $\ann(T) = 0,$ then $T = \bigoplus_{[\alpha]\in \Lambda^T/\sim}T_{[\alpha]}.$
\end{corollary}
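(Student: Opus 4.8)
The plan is to derive this corollary directly from Theorem \ref{teo2} by showing that its two hypotheses collapse the decomposition $T = \mathcal{U} \oplus \sum_{[\alpha]} T_{[\alpha]}$ into a direct sum of the ideals $T_{[\alpha]}$. First I would observe that the hypothesis $T_0 = T_{0,\Lambda^T}$ forces the complement $\mathcal{U}$ to be zero: by definition $\mathcal{U}$ is a vector-space complement of $T_{0,\Lambda^T}$ inside $T$, and since $T_{0,\Lambda^T} \subseteq T_0$ while the nonzero root spaces $T_\alpha$ already account for the rest of $T$, the assumption $T_0 = T_{0,\Lambda^T}$ leaves no room for $\mathcal{U}$. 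Consequently Theorem \ref{teo2} already yields $T = \sum_{[\alpha]\in \Lambda^T/\sim} T_{[\alpha]}$.

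The remaining task is to upgrade this sum to a \emph{direct} sum, and this is where the hypothesis $\ann(T) = 0$ enters. The natural approach is to consider, for a fixed class $[\alpha_0]$, the intersection
\[
T_{[\alpha_0]} \cap \Big(\sum_{[\beta]\neq[\alpha_0]} T_{[\beta]}\Big),
\]
and to show it is contained in $\ann(T)$, hence zero. I would take an element $x$ in this intersection and feed it into the three product positions using the orthogonality relation from Theorem \ref{teo2}: for any $[\beta]\neq[\alpha_0]$ we have $[T,T_{[\alpha_0]},T_{[\beta]}] + [T_{[\alpha_0]},T,T_{[\beta]}] + [T_{[\alpha_0]},T_{[\beta]},T] = 0$. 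Writing $x$ simultaneously as a member of $T_{[\alpha_0]}$ and as a sum of elements of the other $T_{[\beta]}$, the products $[x,T,T]$, $[T,x,T]$ and $[T,T,x]$ split, via the grading and the root-space bookkeeping of Lemmas \ref{lema3}, \ref{lema51}, \ref{lema52}, into a ''within-class'' part and a ''cross-class'' part; the cross-class part vanishes by the orthogonality relation, and the within-class part vanishes because $x$ also lies in the complementary sum. Thus all three products with $x$ are zero, so $x \in \ann(T) = 0$.

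The main obstacle I anticipate is handling the three distinct argument positions consistently, since the triple product is not (anti)symmetric in the Leibniz setting: one must verify that $[x,T,T]$, $[T,x,T]$ and $[T,T,x]$ all vanish, not merely one of them. The orthogonality identity in Theorem \ref{teo2} is already stated with all three positions symmetrized, so the bookkeeping should go through position by position, but care is needed to confirm that an element lying in $T_{[\alpha_0]}\cap\sum_{[\beta]\neq[\alpha_0]}T_{[\beta]}$ genuinely annihilates $T$ in every slot. Once this is secured, the intersection lies in $\ann(T)$, which is zero by hypothesis, so the sum is direct and we conclude
\[
T = \bigoplus_{[\alpha]\in \Lambda^T/\sim} T_{[\alpha]}. \qedhere
\]
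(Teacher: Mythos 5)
Your proposal is correct and takes essentially the approach the paper intends: the paper states this corollary without proof, as an immediate consequence of Theorem \ref{teo2}, and your argument supplies exactly the standard details --- the hypothesis $T_0 = T_{0,\Lambda^T}$ forces $\mathcal{U}=0$, and any $x \in T_{[\alpha_0]} \cap \sum_{[\beta]\neq[\alpha_0]} T_{[\beta]}$ is killed in all three product slots by the orthogonality relation of Theorem \ref{teo2}, hence lies in $\ann(T)=0$. Note only that each of the three terms $[T,T_{[\alpha]},T_{[\beta]}]$, $[T_{[\alpha]},T,T_{[\beta]}]$, $[T_{[\alpha]},T_{[\beta]},T]$ vanishes individually (a sum of subspaces being zero forces each to be zero), which is what makes your position-by-position bookkeeping go through.
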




\subsection{The $Hom$ case}

Now let us go back to the original setting. That is, let $(T,\phi)$ be a split regular $Hom$-Leibniz color 3-algebra. The results of the previous section imply that the algebra $T^{\phi^{-1}}$ is a split Leibniz color 3-algebra with the root system $\Lambda^{T^{\phi^{-1}}} = \Lambda^T\circ\varphi^{-1},$ where $\varphi: \mathcal{A}(T) \to \mathcal{A}(T^{\phi^{-1}})$ is an isomorphism of the canonical envelopes which restricts to an isomorphisim of $\mathfrak{L}(T)$ and $\mathfrak{L}(T^{\phi^{-1}})$. The main results of the last subsection are that there exists an equivalence relation $\sim_{T^{\phi^{-1}}}$ on the set $\Lambda^{T^{\phi^{-1}}}$ such that $T^{\phi^{-1}} $ is the direct sum of a subspace $\mathcal{U} \subseteq T_0$ and the sum of $\phi$-invariant ideals $T^{\phi^{-1}}_{[\alpha]}$ indexed by equivalence classes of $\sim_{T^{\phi^{-1}}}$ such that 
\[[T^{\phi^{-1}}, T^{\phi^{-1}}_{[\alpha]},T^{\phi^{-1}}_{[\beta]}]_{T^{\phi^{-1}}} + [T^{\phi^{-1}}_{[\alpha]},T^{\phi^{-1}},T^{\phi^{-1}}_{[\beta]}]_{T^{\phi^{-1}}} + [T^{\phi^{-1}}_{[\alpha]},T^{\phi^{-1}}_{[\beta]}, T^{\phi^{-1}}]_{T^{\phi^{-1}}}= 0,\]
whenever $[\alpha] \neq [\beta]$.

\medskip

Now we obtain an analogous decomposition for $T.$ First, the equation relating the multiplications in $T$ and $T^{\phi^{-1}}$ implies immediately  that the ideals $T^{\phi^{-1}}_{[\alpha]}, \alpha \in \Lambda^{T^{\phi^{-1}}}$ stay $\phi$-invariant ideals in $T,$ and
\[[T, T^{\phi^{-1}}_{[\alpha]},T^{\phi^{-1}}_{[\beta]}]_T + [T^{\phi^{-1}}_{[\alpha]},T,T^{\phi^{-1}}_{[\beta]}]_T + [T^{\phi^{-1}}_{[\alpha]},T^{\phi^{-1}}_{[\beta]}, T]_T= 0.\]
Having in mind the relation (\ref{dehom_root_spaces}), let us introduce an equivalence relation $\sim_T$ in $\Lambda^T$ saying that $\alpha \sim_T \beta$ for $\alpha, \beta \in \Lambda^T$ if and only if $\alpha\circ\varphi^{-1} \sim_{T^{\phi^{-1}}} \beta\varphi^{-1}.$ Therefore, for $\alpha \in \Lambda^{T^{\phi^{-1}}}$ we have $[\alpha\circ\varphi]_{\sim} = [\alpha]_{\sim_{T^{\phi^{-1}}}}\circ\varphi$ and
\[V_{[\alpha]} = \bigoplus_{\beta \in [\alpha]_{\sim_{T^{\phi^{-1}}}}} T_\beta^{\phi^{-1}} = \bigoplus_{\beta \in [\alpha]_{\sim_{T^{\phi^{-1}}}}} T_{\beta\circ\varphi} = \bigoplus_{\beta' \in [\alpha\circ\varphi]_{\sim}} T_{\beta'}.\]
Analogously one can show that
\[T_{0,[\alpha]} = {\rm span}_{\hu K}\{[T^{\phi^{-1}}_{\beta},T^{\phi^{-1}}_{\gamma},
T^{\phi^{-1}}_{\delta}]^{\phi^{-1}}: \beta, \gamma, \delta \in [\alpha]_{\sim_{T^{\phi^{-1}}}} \cup \{0\}, \beta + \gamma + \delta = 0, \delta \neq 0 \} + \sum_{\beta \in [\alpha]_{\sim_{T^{\phi^{-1}}}}} T^{\phi^{-1}}_{0,\beta} = \]
\[{\rm span}_{\hu K} \{[T_{\beta'},T_{\gamma'},
T_{\delta'}]: \beta', \gamma', \delta' \in [\alpha\circ\varphi]_{\sim_T} \cup \{0\}, \beta' + \gamma' + \delta' = 0, \delta'\neq 0 \} + \sum_{\beta' \in [\alpha\circ\varphi]_{\sim_T}} T_{0,\beta'},\]
where the spaces $T_{0,\beta'}$ are constructed in the same manner as in (\ref{t0alpha}). Therefore, for any $\alpha \in \Lambda^T$ we may define the ideal $T_{[\alpha]_{\sim_T}}$ by the same equations (\ref{t0[alpha]}), (\ref{v[alpha]}) and (\ref{t[alpha]}). Moreover, we have $T_{[\alpha]_{\sim_T}} = T_{[\alpha\circ\varphi^{-1}]_{\sim_{T^{\phi^{-1}}}}}.$

We state our result as a theorem:

\begin{theorem}
\label{decomp_hom}
Let $(T,\phi)$ be a split $Hom$-Leibniz color 3-algebra with multiplication algebra $\mathfrak{L}.$ Suppose that the root systems $\Lambda^T$ and $\Lambda^{\mathfrak{L}}$ are symmetric. Then there exists an equivalence relation $\sim$ on $\Lambda^T$ and a subspace $\mathcal{U} \subseteq T_0$ such that $T = \mathcal{U} \oplus \sum_{[\alpha] \in \Lambda^T/\sim} T_{[\alpha]}$.
Moreover, $[T, T_{[\alpha]},T_{[\beta]}] + [T_{[\alpha]},T,T_{[\beta]}] + [T_{[\alpha]},T_{[\beta]}, T]= 0$,
whenever $[\alpha] \neq [\beta]$.
\end{theorem}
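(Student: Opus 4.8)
The plan is to deduce the statement for $T$ from the corresponding statement for its dehomification $T^{\phi^{-1}}$, transporting the entire decomposition along the isomorphism $\varphi$ of standard embeddings supplied by Proposition \ref{emb_iso}. First I would invoke the reduction established in the ``Reduction to the color case'' subsection: since $T$ is split with root system $\Lambda^T$, the algebra $T^{\phi^{-1}}$ is a split Leibniz color $3$-algebra with automorphism $\phi$, its root system is $\Lambda^{T^{\phi^{-1}}}=\Lambda^T\circ\varphi^{-1}$, and the root spaces are related by $T_\alpha=(T^{\phi^{-1}})_{\alpha\circ\varphi^{-1}}$ from (\ref{dehom_root_spaces}); in particular $T_0=(T^{\phi^{-1}})_0$. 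Because $\varphi$ restricts to an isomorphism $\mathfrak L(T)\to\mathfrak L(T^{\phi^{-1}})$ carrying the MASA $H$ to $\varphi(H)=\phi^{-1}H$, the hypothesis that $\Lambda^T$ and $\Lambda^{\mathfrak L}$ are symmetric passes verbatim to $\Lambda^{T^{\phi^{-1}}}=\Lambda^T\circ\varphi^{-1}$ and $\Lambda^{\mathfrak L}\circ\varphi^{-1}$, since $\alpha\mapsto\alpha\circ\varphi^{-1}$ is linear and hence commutes with negation.

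Next I would apply the machinery of Section 3 to $T^{\phi^{-1}}$. By Proposition \ref{equivalence} the connectivity relation $\sim_{T^{\phi^{-1}}}$ is an equivalence relation on $\Lambda^{T^{\phi^{-1}}}$, and Theorem \ref{teo2} then produces a subspace $\mathcal U\subseteq T_0$ complementary to $T_{0,\Lambda^{T^{\phi^{-1}}}}$ together with $\phi$-invariant ideals $T^{\phi^{-1}}_{[\alpha]}$ satisfying $T^{\phi^{-1}}=\mathcal U\oplus\sum_{[\alpha]}T^{\phi^{-1}}_{[\alpha]}$ and the orthogonality relation $[T^{\phi^{-1}},T^{\phi^{-1}}_{[\alpha]},T^{\phi^{-1}}_{[\beta]}]+[T^{\phi^{-1}}_{[\alpha]},T^{\phi^{-1}},T^{\phi^{-1}}_{[\beta]}]+[T^{\phi^{-1}}_{[\alpha]},T^{\phi^{-1}}_{[\beta]},T^{\phi^{-1}}]=0$ for distinct classes.

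The core step is the transfer back to $T$, which rests on the single identity $[x,y,z]_T=\phi([x,y,z]_{T^{\phi^{-1}}})$ coming from $[x,y,z]^{\phi^{-1}}=\phi^{-1}([x,y,z])$, together with relation (\ref{ad_relation}). Since each $T^{\phi^{-1}}_{[\alpha]}$ is $\phi$-invariant, applying $\phi$ to the containment $[T^{\phi^{-1}}_{[\alpha]},T,T]_{T^{\phi^{-1}}}+\cdots\subseteq T^{\phi^{-1}}_{[\alpha]}$ yields $[T^{\phi^{-1}}_{[\alpha]},T,T]_T+\cdots\subseteq\phi(T^{\phi^{-1}}_{[\alpha]})=T^{\phi^{-1}}_{[\alpha]}$, so each $T^{\phi^{-1}}_{[\alpha]}$ is again a $\phi$-invariant ideal of $T$; and because $\phi(0)=0$, the orthogonality relation for $[\alpha]\ne[\beta]$ holds unchanged in $T$. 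I would then define $\sim$ on $\Lambda^T$ as the pullback of $\sim_{T^{\phi^{-1}}}$ along $\varphi$, so that $[\alpha\circ\varphi]_{\sim}=[\alpha]_{\sim_{T^{\phi^{-1}}}}\circ\varphi$, and verify through (\ref{dehom_root_spaces}) that the ideal $T_{[\alpha]_{\sim}}$ assembled directly from (\ref{t0[alpha]}), (\ref{v[alpha]}) and (\ref{t[alpha]}) coincides with $T^{\phi^{-1}}_{[\alpha\circ\varphi^{-1}]_{\sim_{T^{\phi^{-1}}}}}$. As the underlying vector space is unchanged, the direct-sum decomposition $T=\mathcal U\oplus\sum_{[\alpha]}T_{[\alpha]}$ is then a purely set-theoretic restatement of the one for $T^{\phi^{-1}}$.

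I do not expect a genuine obstacle: the substantive content was already absorbed into Theorem \ref{teo2}, so the Hom case is bookkeeping. The one point requiring care is keeping the two brackets $[\cdot,\cdot,\cdot]_T$ and $[\cdot,\cdot,\cdot]_{T^{\phi^{-1}}}$ and the two labelings of root spaces (by $\Lambda^T$ versus $\Lambda^T\circ\varphi^{-1}$) cleanly separated, so that $\phi$-invariance is deployed at exactly the place where it absorbs the extra factor of $\phi$ generated when converting one bracket into the other.
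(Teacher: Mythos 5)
Your proposal is correct and follows essentially the same route as the paper: the paper likewise applies Theorem \ref{teo2} to the dehomification $T^{\phi^{-1}}$, observes that the relation $[x,y,z]_T=\phi([x,y,z]_{T^{\phi^{-1}}})$ together with $\phi$-invariance makes the ideals $T^{\phi^{-1}}_{[\alpha]}$ into $\phi$-invariant ideals of $T$ satisfying the same orthogonality relation, and pulls back $\sim_{T^{\phi^{-1}}}$ along $\varphi$ to get $\sim_T$ with $T_{[\alpha]_{\sim_T}}=T_{[\alpha\circ\varphi^{-1}]_{\sim_{T^{\phi^{-1}}}}}$. No substantive difference.
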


Finally, note that the relation $\sim_T$ can be introduced in a manner completely analogous to the definition of the relation $\sim_{T^{\phi^{-1}}},$ that is, using the connections via chains of roots in $\Lambda^T$ (that is, it can be interpreted in the terms of the original algebra $T$).

\subsection{The passage of the split structure to $\mathfrak{L}$}

Let $T$ be a split $Hom$-Leibniz color 3-algebra or a split Leibniz color 3-algebra with an automorphism.  In this subsection we show that the decomposition provided by theorems \ref{teo2}, \ref{decomp_hom} induces a similar decomposition for the multiplication algebra $\mathfrak{L}(T),$ and that these two decompositions are related in a certain way. Moreover, we study how this decomposition is related to the usual decomposition of Lie color algebras obtained by usual root connectivity techniques.

\medskip

Let $(T,\phi)$ be a split Leibniz color 3-algebra with an automorphism, and $(\mathfrak{L},\Phi)$ is its multiplication algebra (the case of a split $Hom$-Leibniz color 3-algebra is completely analogous, so we will not consider it here). Again, suppose that both roots systems $\Lambda^T$ and $\Lambda^{\mathfrak{L}}$ are symmetric, so we that are in the conditions of the theorem \ref{teo2}.

For any equivalence class $[\alpha] \subseteq \Lambda^T$ consider the space $\ad(T_{[\alpha]}) = \ad(T_{[\alpha]},T) + \ad(T,T_{[\alpha]}) \subseteq \mathfrak{L}.$ Let also $\Lambda_{[\alpha]}^{\mathfrak{L}}$ be the set of roots $\gamma \in \Lambda^{\mathfrak{L}}$ such that $\mathfrak{L}_\gamma \cap \ad(T_{[\alpha]}) \neq 0.$

\begin{theorem} Let $T, \mathfrak{L}, \Lambda^T, \Lambda^{\mathfrak{L}}$ be as above. Then the following assertions hold:
\begin{enumerate}
    \item $\Lambda^{\mathfrak{L}}$ is the disjoint union of the sets $\Lambda_{[\alpha]}^{\mathfrak{L}},[\alpha] \in \Lambda^T/\sim.$ That is, there exists an equivalence relation $\approx$ in $\Lambda^{\mathfrak{L}}$ such that $\Lambda^T/\sim = \Lambda^{\mathfrak{L}}/\approx.$ 
    \item  Let $\gamma \in \Lambda_{[\alpha_0]}^{\mathfrak{L}}.$ Then  $\mathfrak{L}_\gamma \subseteq \ad(T_{[\alpha_0]}).$ In other words, for any $[\alpha_0] \in \Lambda^T$ we have 
    \[\ad(T_{[\alpha]}) = (\ad(T_{[\alpha]})\cap \mathfrak{L}_0) \oplus \bigoplus_{\gamma \in \Lambda_{[\alpha]}^{\mathfrak{L}}}\mathfrak{L}_{\gamma}.\]
    \item For any equivalence class $[\alpha] \in \Lambda^T/\sim$ the space $\ad(T_{[\alpha]})$ is a $\Phi$-invariant ideal of $\mathfrak{L}.$
    \item $[\ad(T_{[\alpha]}),\ad(T_{[\beta]})] = 0$ for $[\alpha] \neq [\beta].$
    \item There exists a subspace $\mathcal{U}' \subseteq \mathfrak{L}_0$ such that $\mathfrak{L} = \sum_{[\alpha]}\ad(T_{[\alpha]}) + \mathcal{U}'$
    \item For any $[\alpha] \neq [\beta]$ we have $\ad(T_{[\alpha]})\cdot T_{[\beta]} = 0.$
\end{enumerate}
\end{theorem}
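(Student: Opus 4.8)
The plan is to establish the six assertions in order, leveraging the decomposition $T = \mathcal{U}\oplus\sum_{[\alpha]}T_{[\alpha]}$ from Theorem \ref{teo2} together with the structure relations (\ref{AL1_color}) and (\ref{AL2_color}) that govern how elements of $\mathfrak{L}$ multiply and act. The central idea is that, since every element of $\mathfrak{L}$ is a sum of operators $\ad(x,y)$ with $x,y$ lying in root spaces $T_\mu$, the grading of $\mathfrak{L}$ is controlled by the grading of $T$: by Lemma \ref{lema0}(i), $\ad(T_\mu,T_\nu)\subseteq \mathfrak{L}_{\mu+\nu}$. First I would prove assertion (2), which is the technical heart. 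Given $\gamma\in\Lambda^{\mathfrak{L}}_{[\alpha_0]}$, there is a nonzero $\ell\in\mathfrak{L}_\gamma\cap\ad(T_{[\alpha_0]})$; writing a general element of $\mathfrak{L}_\gamma$ as a sum of $\ad(T_\mu,T_\nu)$ with $\mu+\nu=\gamma$, I would use Lemma \ref{lema3} (applied to the action of these operators on root spaces $T_\delta$) to force $\mu,\nu\in[\alpha_0]\cup\{0\}$, whence $\ad(T_\mu,T_\nu)\subseteq\ad(T_{[\alpha_0]})$; this gives $\mathfrak{L}_\gamma\subseteq\ad(T_{[\alpha_0]})$ and simultaneously the direct-sum description of $\ad(T_{[\alpha_0]})$ by intersecting with the root-space decomposition of $\mathfrak{L}$.

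With (2) in hand, assertion (1) follows: define $\gamma\approx\gamma'$ on $\Lambda^{\mathfrak{L}}$ precisely when they lie in the same $\Lambda^{\mathfrak{L}}_{[\alpha]}$, and use (2) to show these sets are pairwise disjoint (a root $\gamma$ landing in two distinct $\Lambda^{\mathfrak{L}}_{[\alpha]},\Lambda^{\mathfrak{L}}_{[\beta]}$ would force $\mathfrak{L}_\gamma\subseteq\ad(T_{[\alpha]})\cap\ad(T_{[\beta]})$, contradicting assertion (4) below once a nonzero product is produced); the bijection $\Lambda^T/\!\sim\;=\;\Lambda^{\mathfrak{L}}/\!\approx$ is then the map $[\alpha]\mapsto\Lambda^{\mathfrak{L}}_{[\alpha]}$, whose surjectivity uses that every nonzero $\gamma$ comes from some $\ad(T_\mu,T_\nu)$ with $\mu$ or $\nu$ a genuine root. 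For assertion (4), I would expand $[\ad(T_{[\alpha]}),\ad(T_{[\beta]})]$ via (\ref{AL1_color}): each bracket $[\ad(a,b),\ad(c,d)]$ with $a,b\in T_{[\alpha]}\cup\{$arbitrary$\}$, $c,d$ in the $[\beta]$-indexed spaces, equals $\ad([a,b,c],d)+\epsilon(a+b,c)\ad(c,[a,b,d])$, and the interlocking products $[T,T_{[\alpha]},T_{[\beta]}]$ etc.\ all vanish by Theorem \ref{teo2} when $[\alpha]\neq[\beta]$, killing both terms.

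Assertion (6) is essentially immediate from Theorem \ref{teo2}: $\ad(T_{[\alpha]})\cdot T_{[\beta]}$ consists of products $[T_{[\alpha]},T,T_{[\beta]}]+[T,T_{[\alpha]},T_{[\beta]}]$ together with their $\ad(T,T_{[\alpha]})$ counterparts, all of which are among the vanishing triple products for $[\alpha]\neq[\beta]$. For assertion (3), $\Phi$-invariance of $\ad(T_{[\alpha]})$ follows from the relation $\phi\,\ad(x,y)\,\phi^{-1}=\ad(\phi(x),\phi(y))$ together with $\phi(T_{[\alpha]})=T_{[\alpha]}$ (proved inside Theorem \ref{teo1}), while the ideal property $[\mathfrak{L},\ad(T_{[\alpha]})]\subseteq\ad(T_{[\alpha]})$ is read off from (\ref{AL2_color}): $[\ell,\ad(a,b)]=\ad(\ell a,b)+\epsilon(\ell,a)\ad(a,\ell b)$, and since $\ell a,\ell b\in T_{[\alpha]}$ whenever $a,b\in T_{[\alpha]}$ (because $T_{[\alpha]}$ is an ideal of $T$, so closed under the $\mathfrak{L}$-action), the result stays in $\ad(T_{[\alpha]})$. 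Finally assertion (5) follows by choosing $\mathcal{U}'$ to be any complement in $\mathfrak{L}_0$ of the subspace $\sum_{[\alpha]}(\ad(T_{[\alpha]})\cap\mathfrak{L}_0)$, since assertion (2) shows the nonzero root spaces of $\mathfrak{L}$ are exhausted by the $\ad(T_{[\alpha]})$. The main obstacle I anticipate is assertion (2): one must carefully argue that an element of $\mathfrak{L}_\gamma$ which happens to lie in $\ad(T_{[\alpha_0]})$ forces \emph{all} of $\mathfrak{L}_\gamma$ into $\ad(T_{[\alpha_0]})$, which requires the full strength of the connectivity machinery of Lemma \ref{lema3} rather than a one-line grading argument, and care is needed because the operators $\ad(T_0,T_0)$ need not lie in $H$ (Remark \ref{r1}), so the $0$-component $\ad(T_{[\alpha]})\cap\mathfrak{L}_0$ must be handled separately from the genuine root spaces.
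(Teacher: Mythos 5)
Your treatment of assertions (3)--(6) is correct and essentially the paper's own: (3) via the derivation formula (the paper expands $[\ad(T_{[\alpha]},T),\ad(T,T)]$ with (\ref{AL1_color}), you use the equivalent (\ref{AL2_color})) together with the $\phi$-invariance of $T_{[\alpha]}$ from Theorem \ref{teo1}; (4) and (6) by expanding brackets and invoking the vanishing triple products of Theorem \ref{teo2}; (5) by choosing a complement inside $\mathfrak{L}_0$, using $\ad(\mathcal{U},\mathcal{U})\subseteq\mathfrak{L}_0$. The overall architecture, namely $\mathfrak{L}=\sum_{[\alpha]}\ad(T_{[\alpha]})+\ad(\mathcal{U},\mathcal{U})$ with each $\ad(T_{[\alpha]})$ graded by $\Lambda^{\mathfrak{L}}$, is also exactly the paper's.

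The gap sits precisely where you predicted the difficulty, in the forcing step behind (1)/(2). You claim that Lemma \ref{lema3}, applied to some nonzero product $[T_\mu,T_\nu,T_\delta]$, forces $\mu,\nu\in[\alpha_0]\cup\{0\}$. It cannot: Lemma \ref{lema3} presupposes that one of the roots occurring in the nonzero triple product \emph{already} lies in $[\alpha_0]$, and here none of $\mu,\nu,\delta$ is known to; all that lemma yields is that $\mu,\nu,\delta$ are connected to each other, i.e.\ lie in $[\mu]\cup\{0\}$. The witness $\ell\in\mathfrak{L}_\gamma\cap\ad(T_{[\alpha_0]})$, which is the only available link to $[\alpha_0]$, is never actually used in your sketch. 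The missing idea --- and the paper's actual argument --- is to convert that witness into roots and build an explicit connection: by (\ref{adtalpha}) and the gradedness (\ref{adtalpha_graded}) of $\ad(T_{[\alpha_0]})$, the condition $\gamma\in\Lambda^{\mathfrak{L}}_{[\alpha_0]}$ gives $\gamma=\alpha_1+\alpha_2$ with $\alpha_1,\alpha_2\in[\alpha_0]\cup\{0\}$, not both zero, and $\ad(T_{\alpha_1},T_{\alpha_2})+\ad(T_{\alpha_2},T_{\alpha_1})\neq0$; then for any other pair with $\mu+\nu=\gamma$, $\ad(T_\mu,T_\nu)\neq0$ and (say) $\mu\neq0\neq\alpha_1$, the ordered set $\{\mu,\nu,-\alpha_2\}$ is a connection from $\mu$ to $\alpha_1$ (indeed $\mu\dotplus\nu=\gamma\in\Lambda^{\mathfrak{L}}$ since $\gamma\neq0$, and $\gamma\dotplus(-\alpha_2)=\alpha_1$), so $\mu\sim\alpha_0$ and, by Lemma \ref{lema1}(i), also $\nu\sim\alpha_0$ when $\nu\neq 0$. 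This single connection simultaneously yields the disjointness in (1) (the paper's route: it proves (1) first this way and then reads (2) off from gradedness) and the inclusion $\mathfrak{L}_\gamma\subseteq\ad(T_{[\alpha_0]})$ in (2). Your fallback route to disjointness --- a contradiction with assertion (4) ``once a nonzero product is produced'' --- is also not viable as stated: $\mathfrak{L}_\gamma\subseteq\ad(T_{[\alpha]})\cap\ad(T_{[\beta]})$ produces no nonzero bracket by itself (e.g.\ $[\mathfrak{L}_\gamma,\mathfrak{L}_\gamma]$ may vanish), so no contradiction with (4) materializes without the connection argument above.
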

\begin{proof}

\begin{enumerate}
\item Recall the decomposition of theorem \ref{teo2}: $T = \mathcal{U} \oplus \sum_{[\alpha] \in \Lambda^T/\sim} T_{[\alpha]}$ for $\mathcal{U} \subseteq T_0$ with $[T_{[\alpha]},T_{[\beta]},T] = 0$, whenever $[\alpha] \neq [\beta]$. Therefore, for any $\alpha \in \Lambda^T$ we have
\begin{equation}
\label{adtalpha}
\ad(T_{[\alpha]}) = \ad(T_{[\alpha]},T_{[\alpha]}) + \ad(T_{[\alpha]},\mathcal{U}) + \ad(\mathcal{U},T_{[\alpha]}) 
\end{equation}
and we obtain the following decomposition for $\mathfrak{L}:$
\begin{equation}
\label{L_decomp}
\mathfrak{L} = \ad(T,T) = \sum_{[\alpha]}\ad(T_{[\alpha]}) + \ad(\mathcal{U},\mathcal{U}).
\end{equation}

Since the ideals $T_{[\alpha]}$ are graded with respect to $\Lambda^T,$ lemma \ref{lema0} implies that the spaces $\ad(T_{[\alpha]}) \subseteq \mathfrak{L}$ are graded with respect to $\Lambda^{\mathfrak{L}},$ that is,
\begin{equation}
\label{adtalpha_graded}
\ad(T_{[\alpha]}) = \bigoplus_{\gamma \in \Lambda^{\mathfrak{L}} \cup \{0\}}(\ad(T_{[\alpha]}\cap \mathfrak{L}_\gamma).
\end{equation}
This, decomposition (\ref{L_decomp}) and the fact that $\ad(\mathcal{U},\mathcal{U}) \subseteq \mathfrak{L}_0$ imply that $\Lambda^{\mathfrak{L}} = \bigcup_{[\alpha] \subseteq \Lambda^T}\Lambda_{[\alpha]}^{\mathfrak{L}}.$ 
From relation (\ref{adtalpha}) it follows that
\[\Lambda_{[\alpha_0]}^{\mathfrak{L}} = \{\alpha+\beta : \alpha,\beta \in [\alpha_0]\cup\{0\}, \alpha + \beta \neq 0, \ad(T_\alpha,T_\beta) + \ad(T_\beta,T_\alpha) \neq 0\}\]
Suppose now that $\gamma \in \Lambda_{[\alpha_0]}^{\mathfrak{L}} \cap \Lambda_{[\beta_0]}^{\mathfrak{L}},$ that is,
$\gamma = \alpha_1 + \alpha_2 = \beta_1 + \beta_2$ \mbox{ where } $\alpha_1, \alpha_2, \beta_1, \beta_2$ satisfy the conditions above.
It is clear that at least one of the elements $\alpha_1, \alpha_2 \neq 0,$ so without loss of generality let $\alpha_1 \neq 0.$ Analogously let $\beta_1 \neq 0.$ Then one can easily see that $\{\alpha_1,\alpha_2,-\beta_2\}$ is a connection from $\alpha_1$ to $\beta_1,$ thus, $[\alpha_0] = [\beta_0].$ Therefore, any root $\gamma \in \Lambda_{\mathfrak{L}}$ belongs to a unique set $\Lambda_{[\alpha]}^{\mathfrak{L}}$ for a certain $[\alpha] \subseteq \Lambda^T$ and we $\Lambda^{\mathfrak{L}} = \bigsqcup_{[\alpha] \subseteq \Lambda^T}\Lambda_{[\alpha]}^{\mathfrak{L}}.$

This decomposition induces an equivalence relation $\approx$ in $\Lambda^{\mathfrak{L}}$ (two elements are equivalent if they lie in the same set $\Lambda_{[\alpha]}^{\mathfrak{L}}$).

\item The first claim is just a restatement of the assertion that the sets $\Lambda_{[\alpha]}^{\mathfrak{L}}, [\alpha] \in \Lambda^T/\sim$ have empty intersection, and the second is a consequence of the first and the $\Lambda^{\mathfrak{L}}$-homogeneity of $\ad(T_{[\alpha]}$ (\ref{adtalpha_graded}).

\item The $\Phi$-invariance is a consequence of the definition of $\Phi$ and the $\phi$-invariance of $T_{[\alpha]}.$ The rest is a simple application of the relation (\ref{AL1_color}). For example,
\[[\ad(T_{[\alpha]},T),\ad(T,T)] = \ad([T_{[\alpha]},T,T],T) + \ad(T,[T_{[\alpha]},T,T]) \subseteq \ad(T_{[\alpha]}).\]
Considering analogously the product $[\ad(T,T),\ad(T_{[\alpha]},T)],$ we are done.

\item Let $[\beta] \neq [\alpha].$ Again, by relation (\ref{AL1_color}),
\[[\ad(T,T_{[\alpha]}),\ad(T,T_{[\beta]})] = \ad([T,T_{[\alpha]},T],T_{[\beta]}) + \ad(T,[T,T_{[\alpha]},T_{[\beta]}]) = 0.\]
Other products are considered completely analogously.

\item In the decomposition (\ref{L_decomp}) we cannot guarantee that $\ad(\mathcal{U},\mathcal{U}) \cap \sum_{[\alpha]}\ad(T_{[\alpha]}) = 0,$ so just taking another subspace $\mathcal{U}' \subseteq \mathfrak{L}_0$ complementing $(\sum_{[\alpha]}\ad(T_{[\alpha]})) \cap \mathfrak{L}_0,$ we get
\[\mathfrak{L} = \sum_{[\alpha]}\ad(T_{[\alpha]}) + \mathcal{U}'.\]

\item This item follows easily from theorem \ref{teo2}.

\end{enumerate} 
\end{proof}

Therefore, for the algebra $\mathfrak{L}$ we get a decomposition that is very similar to the one in theorems \ref{teo2}, \ref{decomp_hom}, the only difference is that there does not seem to be an easy way to express the space $(\ad(T_{[\alpha]})\cap\mathfrak{L}_0$ in terms of the spaces $\mathfrak{L}_\gamma, \gamma \in \Lambda^{\mathfrak{L}}_{[\alpha]}.$ Another decomposition (by classical root connection methods) of split $Hom$-Lie color algebras was obtained in the paper \cite{Cao3}.

\begin{remark}
Recall that $\mathfrak{L}$ is a split $Hom$-Lie color algebra, and that $T$ is a weight $\mathfrak{L}$-module. Weight modules over split Lie algebras were considered in the paper \cite{AJCSmod}, in which the authors obtained related decompositions of a split Lie algebra $L$ and its weight module $M.$ 
\end{remark}

Note, however, that our decomposition only holds for split Lie color algebras with automorphism which are the multiplication algebras of Leibniz color 3-algebras with automorphism. But this class is relatively big, as shows the following assertion:

\begin{proposition}
Let $(\mathfrak{L},\Phi)$ be a split Lie color algebra with automorphism such that $\mathfrak{L} = \mathfrak{L}^2.$ Then there exists a split Leibniz color 3-algebra $T$ with automorphism $\phi$ such that its multiplication algebra is $\mathfrak{L}.$
\end{proposition}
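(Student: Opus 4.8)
The plan is to realize $\mathfrak{L}$ directly as a multiplication algebra by letting it act on a space $T$ through a triple product of the shape ``bracket first, then act''. The most economical candidate is $T := \mathfrak{L}$ itself, equipped with the ternary operation
\[[x,y,z] := [[x,y],z],\]
where on the right we use the Lie color bracket of $\mathfrak{L}$; this is the same recipe used to turn $\mathcal{A}^0$ into a $3$-algebra in the discussion following Definition \ref{defembed}. First I would check that $(T,[\cdot,\cdot,\cdot],\epsilon)$ is a Leibniz color $3$-algebra. Expanding identity (\ref{DII}) for this product and repeatedly using that $\ad([x_1,x_2])$ is an $\epsilon$-derivation of the bracket (that is, the color Jacobi identity in $\mathfrak{L}$), both sides collapse to $[[x_1,x_2],[[y_1,y_2],y_3]]$, so (\ref{DII}) holds; the grading condition is immediate since the bracket is graded.

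Next I would compute the multiplication algebra. By definition $\ad_T(x,y)(z)=[[x,y],z]=\ad_{\mathfrak{L}}([x,y])(z)$, so $\ad_T(x,y)=\ad_{\mathfrak{L}}([x,y])$ and
\[\mathfrak{L}(T)=\mathrm{span}\{\ad_{\mathfrak{L}}([x,y]):x,y\in T\}=\ad_{\mathfrak{L}}([\mathfrak{L},\mathfrak{L}])=\ad_{\mathfrak{L}}(\mathfrak{L}),\]
where the hypothesis $\mathfrak{L}=\mathfrak{L}^2$ is used precisely to pass from $\ad_{\mathfrak{L}}(\mathfrak{L}^2)$ to $\ad_{\mathfrak{L}}(\mathfrak{L})$. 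Since the bracket (\ref{AL1_color}) on $\mathfrak{L}(T)$ is the color commutator of operators, the map $\sigma:=\ad_{\mathfrak{L}}\colon\mathfrak{L}\to\mathfrak{L}(T)$ is a homomorphism of Lie color algebras, onto by the previous display. Setting $\phi:=\Phi$ on $T=\mathfrak{L}$, the fact that $\Phi$ is a Lie color automorphism gives $\Phi[[x,y],z]=[[\Phi x,\Phi y],\Phi z]$, so $\phi$ is an automorphism of $T$, and conjugation by $\Phi$ on $\mathfrak{L}(T)$ corresponds under $\sigma$ to $\Phi$ on $\mathfrak{L}$; thus $\sigma$ respects the automorphisms.

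To see that $T$ is split, fix the MASA $H$ of $\mathfrak{L}_0$ witnessing the splitness of $\mathfrak{L}$. Under $\sigma$ it maps to a MASA of $\mathfrak{L}(T)_0$, and its action on $T=\mathfrak{L}$ is by $\ad_{\mathfrak{L}}(H)$; hence the root-space decomposition of $T$ with respect to $H$ coincides with that of the split Lie color algebra $\mathfrak{L}$, giving $T=T_0\oplus\bigoplus_{\alpha}T_\alpha$ with $T_\alpha=\mathfrak{L}_\alpha$. So $T$ is a split Leibniz color $3$-algebra with automorphism whose multiplication algebra is $\sigma(\mathfrak{L})$.

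The one genuine obstacle is faithfulness: $\sigma=\ad_{\mathfrak{L}}$ has kernel $Z(\mathfrak{L})$, so the construction above yields $\mathfrak{L}(T)\cong\mathfrak{L}/Z(\mathfrak{L})$ rather than $\mathfrak{L}$, and the two coincide only when $Z(\mathfrak{L})=0$. To obtain $\mathfrak{L}$ on the nose one must replace the adjoint action by a faithful one while keeping it $H$-diagonalizable. Concretely I would take $T:=\mathfrak{L}\oplus W$ for a faithful $\mathbb{G}$-graded, $\Phi$-equivariant weight $\mathfrak{L}$-module $(W,\rho,\psi)$, put $\sigma:=\ad_{\mathfrak{L}}\oplus\rho$ (now injective) and $\phi:=\Phi\oplus\psi$, and define
\[[t_1,t_2,t_3]:=\sigma\big([a_1,a_2]\big)(t_3),\]
where $a_i$ is the $\mathfrak{L}$-component of $t_i$. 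The verification that this is a split Leibniz color $3$-algebra is identical to the one above, again reducing to the color Jacobi identity, and now $\mathfrak{L}(T)=\sigma(\mathfrak{L}^2)=\sigma(\mathfrak{L})\cong\mathfrak{L}$. The hard point is therefore the existence of a suitable faithful weight module: because $\mathfrak{L}=\mathfrak{L}^2$, central elements are invisible to small modules (in the Lie algebra case they act trivially on every finite-dimensional module), so when $Z(\mathfrak{L})\neq0$ such a $W$ is necessarily infinite-dimensional, as already happens for affine-type algebras where the center acts by the level on highest-weight modules. Producing $W$ in the required generality — graded, compatible with $\Phi$, and detecting the center — is where the real work lies; for $Z(\mathfrak{L})=0$ it is unnecessary and one simply takes $T=\mathfrak{L}$.
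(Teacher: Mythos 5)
Your first construction coincides exactly with the paper's own proof. The paper also takes $T=\mathfrak{L}$ with the regular module action, the map $\ad\colon x\otimes y\mapsto[x,y]$ (equivalently the triple product $[x,y,z]=[[x,y],z]$), and $\phi=\Phi$, observing as you do that the result is in fact a Lie triple color system and that it is split with respect to the same MASA $H$. Your verifications of (\ref{DII}) via the color Jacobi identity and of $\mathfrak{L}(T)=\ad_{\mathfrak{L}}(\mathfrak{L}^{2})=\ad_{\mathfrak{L}}(\mathfrak{L})$ (this is where $\mathfrak{L}=\mathfrak{L}^{2}$ enters) are correct and supply details that the paper compresses into ``this system satisfies the conditions above.''

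The faithfulness obstruction you single out, however, is genuine, and the paper does not address it. Under the paper's own definition the multiplication algebra is the operator algebra $\mathrm{span}\{\ad_{T}(x,y)\}\subseteq\mathrm{End}(T)$, so for the regular action it is $\ad_{\mathfrak{L}}(\mathfrak{L})\cong\mathfrak{L}/Z(\mathfrak{L})$; the paper's intermediate reduction (``it suffices to give a weight $\mathfrak{L}$-module $T$ with a surjective module homomorphism $T^{\otimes2}\to\mathfrak{L}$ and a compatible automorphism'') silently omits the requirement that the module be faithful, which is exactly what fails for the regular module when $Z(\mathfrak{L})\neq0$. Perfectness does not rescue this: $\mathfrak{L}=\mathfrak{sl}(n|n)$ with $n\geq2$, $\mathbb{G}=\mathbb{Z}_{2}$ and $\Phi=\mathrm{id}$, is a split perfect Lie superalgebra with center $\mathbb{C}I_{2n}$, and there the construction yields $\mathfrak{psl}(n|n)$, not $\mathfrak{sl}(n|n)$. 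So your caveat points to a real gap in the published proof rather than to a defect of your reading: both the paper's argument and your first paragraph prove the proposition only when $Z(\mathfrak{L})=0$, or with ``multiplication algebra'' weakened to ``homomorphic image of $\mathfrak{L}$.'' Your proposed repair $T=\mathfrak{L}\oplus W$ is structurally sound --- the Leibniz identity again reduces to $\sigma$ being a Lie color algebra homomorphism, and injectivity of $\sigma$ does give $\mathfrak{L}(T)\cong\mathfrak{L}$ --- but, as you acknowledge, it hinges on the unproved existence of a faithful, $\mathbb{G}$-graded, $\Phi$-equivariant weight module admitting the required surjection, so as a proof of the general statement your proposal is also incomplete. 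The honest conclusion is that the proposition, as actually proved (by the paper and by you), needs either the additional hypothesis $Z(\mathfrak{L})=0$ or a construction of such a module $W$.
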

\begin{proof}
Recall that any split Leibniz color 3-algebra $T$ with automorphism $\phi$ is a weight Lie color module over its multiplication algebra $\mathfrak{L},$ and the relation (\ref{DII}) is equivalent to the fact that the map $\ad: T^{\otimes 2} \to \mathfrak{L}$ is a homomorphism of $\mathfrak{L}$-modules. Clearly, by the construction of $\mathfrak{L},$ this homomorphism must be surjective. Moreover, the automorphisms $\phi$ and $\Phi$ are compatible in the sense that $\Phi(\ell)\cdot\phi(x) = \phi(\ell\cdot x)$ for all $\ell \in \mathfrak{L}, x \in T.$ In fact, one can easily check that the converse statement also holds, that is, to construct the desired algebra $T,$ it suffices to give a weight $\mathfrak{L}$-module $T$ with a surjective module homomorphism $\ad: T^{\otimes 2} \to \mathfrak{L}$ and an automorphism $\phi$ of $T$ compatible with $\Phi.$

So, we take $T = \mathfrak{L}$ with the regular module action, the automorphism $\phi = \Phi$ and the map $\ad: x \otimes y \mapsto [x,y],$ where $[\cdot,\cdot]$ is the multiplication in $\mathfrak{L}.$ This system satisfies the conditions above and is therefore a split Leibniz color 3-algebra $T$ (in fact, a Lie triple system) with automorphism  with the multiplication algebra $\mathfrak{L}.$
\end{proof}

\medskip

Given a split Lie color algebra $L$ with automorphism $\varphi$ and the (symmetric) root system $\Lambda^L$ one can introduce the notion of roots connectivity in $L$ (see, for example, paper \cite{AJC} and others on binary split algebras):

\begin{definition}
\label{L_connect}
Say that two roots $\alpha, \beta \in \Lambda^L$ are connected (and denote it as $\alpha \sim \beta$) if there exists a chain $\{\alpha_1,\alpha_2,\ldots,\alpha_n\} \subseteq \Lambda^L$ such that 
\begin{enumerate}
\item $\alpha_1 = \alpha.$
\item $\alpha_1 + \ldots + \alpha_k \in \Lambda^L$ for any $k = 1,\ldots,n.$
\item $\alpha_1 + \ldots + \alpha_n = \pm\beta\varphi^k,$ where $k \in \mathbb{Z}.$
\end{enumerate}
\end{definition}

One can see that $\sim$ is an equivalence relation on $\Lambda^L,$ and that there exists a subspace $\mathcal{V} \subseteq L_0$ such that $L = V \oplus \bigoplus_{[\gamma] \in \Lambda^L/\sim}L_{[\gamma]},$ where $L_{[\gamma]}$ are $\varphi$-invariant ideals such that $[L_{[\gamma]},L_{[\delta]}] = 0$ for $[\gamma] \neq [\delta].$ 

Now let us return to the algebra $\mathfrak{L}.$ There are now two equivalence relations on the root system $\Lambda^{\mathfrak{L}}$: usual (intrinsic) root connectivity given by the definition above and the relation $\approx$ which appears by ''descending'' the decomposition of $T$ to $\mathfrak{L}.$  It would be interesting to see how these two equivalence relations are related. For now, we have the following partial result.

\begin{definition}
Let $L$ be a split Lie color algebra $L$ with automorphism $\varphi$ and the root system $\Lambda^L.$ We say that $L$ is root-multiplicative if for any two roots $\alpha, \beta \in \Lambda^L$ such that $\alpha + \beta \in \Lambda^L \cup \{0\}$ we have $[L_\alpha,L_\beta] \neq 0.$
\end{definition}

\begin{proposition}
Let $\mathfrak{L}$ be root-multiplicative and let $\gamma_1, \gamma_2 \in \Lambda^{\mathfrak{L}}$ be two connected roots. Then $\gamma_1 \approx \gamma_2.$
\end{proposition}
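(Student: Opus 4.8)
The plan is to show that an intrinsic connection chain in $\Lambda^{\mathfrak L}$ can be traversed one step at a time without leaving a single class $\Lambda_{[\alpha_0]}^{\mathfrak L}$, and then to absorb the trailing sign and power of $\Phi$ using that each such class is symmetric and $\Phi$-invariant. Concretely, let $\{\alpha_1,\ldots,\alpha_n\}\subseteq\Lambda^{\mathfrak L}$ be a chain from $\gamma_1$ to $\gamma_2$ as in Definition \ref{L_connect}, and write $s_k=\alpha_1+\cdots+\alpha_k$, so that $s_1=\gamma_1$, every $s_k\in\Lambda^{\mathfrak L}$, and $s_n=\pm\gamma_2\Phi^m$ for some $m\in\mathbb Z$. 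Since $\approx$ is an equivalence relation, it suffices to prove (a) $s_k\approx s_{k+1}$ for each $k$, and (b) $\gamma_2\approx\pm\gamma_2\Phi^m$; combining these with transitivity gives $\gamma_1=s_1\approx s_n\approx\gamma_2$.

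For step (a) I would prove the following single-step claim: if $s,\alpha\in\Lambda^{\mathfrak L}$ and $s+\alpha\in\Lambda^{\mathfrak L}$, then $s\approx s+\alpha$. Indeed, let $s\in\Lambda_{[\alpha_0]}^{\mathfrak L}$; by part (2) of the theorem above one has $\mathfrak L_s\subseteq\ad(T_{[\alpha_0]})$. Root-multiplicativity of $\mathfrak L$ applied to $s+\alpha\in\Lambda^{\mathfrak L}\cup\{0\}$ yields $[\mathfrak L_s,\mathfrak L_\alpha]\neq0$, while Lemma \ref{lema0}(iv) gives $[\mathfrak L_s,\mathfrak L_\alpha]\subseteq\mathfrak L_{s+\alpha}$ and the fact that $\ad(T_{[\alpha_0]})$ is an ideal (part (3)) gives $[\mathfrak L_s,\mathfrak L_\alpha]\subseteq\ad(T_{[\alpha_0]})$. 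Hence $0\neq[\mathfrak L_s,\mathfrak L_\alpha]\subseteq\mathfrak L_{s+\alpha}\cap\ad(T_{[\alpha_0]})$, so $s+\alpha\in\Lambda_{[\alpha_0]}^{\mathfrak L}$, i.e. $s\approx s+\alpha$. Applying this with $s=s_k$, $\alpha=\alpha_{k+1}$ settles (a); this is precisely the step where root-multiplicativity is indispensable.

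For step (b) I would establish two stability properties of the classes. The $\Phi$-invariance of $\Lambda_{[\alpha_0]}^{\mathfrak L}$ follows from part (3): if $\gamma\in\Lambda_{[\alpha_0]}^{\mathfrak L}$ then $\mathfrak L_\gamma\subseteq\ad(T_{[\alpha_0]})$, and since $\Phi$ is an automorphism with $\Phi(\mathfrak L_\gamma)\subseteq\mathfrak L_{\gamma\Phi^{-1}}$ (Lemma \ref{lema0}(vii)) and $\Phi(\ad(T_{[\alpha_0]}))=\ad(T_{[\alpha_0]})$, injectivity of $\Phi$ forces $\mathfrak L_{\gamma\Phi^{-1}}\cap\ad(T_{[\alpha_0]})\neq0$; thus $\gamma\Phi^{\pm1}\in\Lambda_{[\alpha_0]}^{\mathfrak L}$ and $\gamma\approx\gamma\Phi^m$ for all $m$. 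The symmetry $\gamma\approx-\gamma$ is the delicate point, and I would argue it exactly as in the disjointness proof of part (1): writing $\gamma=\alpha_1+\alpha_2$ with $\alpha_1,\alpha_2\in[\alpha_0]\cup\{0\}$, $\alpha_1\neq0$, and $\ad(T_{\alpha_1},T_{\alpha_2})+\ad(T_{\alpha_2},T_{\alpha_1})\neq0$, and writing the (unique) class of $-\gamma$ via $-\gamma=\mu_1+\mu_2$ with $\mu_1\neq0$, one checks that $\{\alpha_1,\alpha_2,\mu_2\}$ is a connection from $\alpha_1$ to $\mu_1$ in the sense of Definition \ref{defco}: here $\alpha_1\dotplus\alpha_2=\gamma\in\Lambda^{\mathfrak L}$ and $(\alpha_1\dotplus\alpha_2)\dotplus\mu_2=\gamma+\mu_2=-\mu_1\in\pm\Lambda^T$, so conditions (2)--(4) of Definition \ref{defco} hold. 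Therefore $\alpha_1\sim\mu_1$, the two classes coincide, and $-\gamma\in\Lambda_{[\alpha_0]}^{\mathfrak L}$. Combining, $\gamma_2\approx\gamma_2\Phi^m\approx-\gamma_2\Phi^m=\pm\gamma_2\Phi^m=s_n$, which closes step (b).

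The main obstacle is step (b), and specifically the symmetry $\gamma\approx-\gamma$: unlike $\Phi$-invariance, it cannot be read off from the ideal structure alone, because $\ad(T_{\alpha_1},T_{\alpha_2})\neq0$ does not formally imply $\ad(T_{-\alpha_1},T_{-\alpha_2})\neq0$. The resolution is to reinterpret membership of $-\gamma$ through the $T$-side operation $\dotplus$ and to invoke the same chain-building computation that underlies the disjointness of $\Lambda^{\mathfrak L}=\bigsqcup_{[\alpha]}\Lambda_{[\alpha]}^{\mathfrak L}$. Everything else reduces to the already-established structure theorem together with Lemma \ref{lema0}.
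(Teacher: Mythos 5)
Your proof is correct, but it takes a genuinely different route from the paper's. The paper proceeds by induction on the intrinsic chain and descends to $T$ at every step: it writes $\gamma_i = \alpha_i + \beta_i$ with $\ad(T_{\alpha_i},T_{\beta_i}) + \ad(T_{\beta_i},T_{\alpha_i}) \neq 0$, uses root-multiplicativity to get $[\mathfrak{L}_{\gamma_1},\mathfrak{L}_{\gamma_2}] \neq 0$, expands this bracket via the identity (\ref{AL1_color}) to extract a nonzero triple product such as $[T_{\alpha_1},T_{\beta_1},T_{\alpha_2}] \neq 0$, and then builds explicit connections in $\Lambda^T$ (with a case analysis on which constituent roots vanish and on whether $\alpha_1+\beta_1+\alpha_2=0$); the sign and the power of $\Phi$ are absorbed automatically because $T$-connections end at $\pm\beta\Phi^k$ by Definition \ref{defco}. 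You instead keep the chain traversal entirely at the level of $\mathfrak{L}$: your single-step claim combines root-multiplicativity with Lemma \ref{lema0}(iv) and parts (2)--(3) of the preceding structure theorem to get $0 \neq [\mathfrak{L}_s,\mathfrak{L}_\alpha] \subseteq \mathfrak{L}_{s+\alpha}\cap\ad(T_{[\alpha_0]})$, which yields $s \approx s+\alpha$ with no descent to $T$-roots and no case analysis; the only place you go down to $T$ is the symmetry $\gamma \approx -\gamma$, and your connection $\{\alpha_1,\alpha_2,\mu_2\}$ there checks out (note $\mu_2 \neq -\gamma$ since $\mu_1 \neq 0$, so every $\dotplus$ involved is an ordinary sum), mirroring the disjointness argument in part (1) of the theorem; you are also right that this step cannot be read off from the ideal structure alone. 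What each approach buys: yours is more modular, isolates exactly where root-multiplicativity enters, and handles an arbitrary-length chain uniformly through the partial sums $s_k$, whereas the paper's argument, though announced as an induction, only writes out chains of length one and two; on the other hand, the paper's $T$-level computation yields finer information, namely that all the nonzero constituent $T$-roots $\alpha_i,\beta_i$ of the $\mathfrak{L}$-roots involved are mutually connected in $\Lambda^T$, not merely that the $\mathfrak{L}$-roots lie in the same class $\Lambda^{\mathfrak{L}}_{[\alpha]}$.
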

\begin{proof}
We proceed by induction. Let first $\gamma_2 = \pm\gamma_1\Phi^k$ for $k \in \mathbb{Z}.$ By definition of $\mathfrak{L},$ we have 
\[\gamma_i = \alpha_i + \beta_i, \mbox{ where } \ad(T_{\alpha_i},T_{\beta_i}) + \ad(T_{\beta_i},T_{\alpha_i}) \neq 0, \alpha_i, \beta_i \in \Lambda^T \cup\{0\}.\]
Without loss of generality let $\alpha_1, \alpha_2 \neq 0.$
Then $\{\alpha_1,\beta_1,\mp\beta_2\Phi^k\}$ is a connection from $\alpha_1$ to $\alpha_2,$ and $\gamma_1 \approx \gamma_2.$

\medskip

Now, let $\gamma_1 + \gamma_2 = \pm\gamma_3\Phi^k,$ where $\gamma_3 = \alpha_3 + \beta_3$ for $\alpha_3, \beta_3 \in \Lambda^{T}\cup\{0\}.$ By root-multiplicativity of $\mathfrak{L},$ we have
\[[\mathfrak{L}_{\gamma_1},\mathfrak{L}_{\gamma_2}] \neq 0,\]
so
\[[\ad(T_{\alpha_1},T_{\beta_1}),\ad(T_{\alpha_2},T_{\beta_2})] \neq 0\]
for some $\alpha_i,\beta_i \in \Lambda^T\cup\{0\}$ such that $\alpha_i + \beta_i = \gamma_i.$
By Leibniz identity (\ref{AL1_color}), we get
\[0 \neq [\ad(T_{\alpha_1},T_{\beta_1}),\ad(T_{\alpha_2},T_{\beta_2})] \subseteq \ad([T_{\alpha_1},T_{\beta_1},T_{\alpha_2}],T_{\beta_2}) + \ad([T_{\alpha_1},T_{\beta_1},T_{\beta_2}],T_{\alpha_2}),\]
thus, $[T_{\alpha_1},T_{\beta_1},T_{\alpha_2}] + [T_{\alpha_1},T_{\beta_1},T_{\beta_2}] \neq 0.$
Without loss of generality suppose that  $[T_{\alpha_1},T_{\beta_1},T_{\alpha_2}] \neq 0,$ which implies that $\alpha_1 + \beta_1 + \alpha_2 \in \Lambda^T \cup \{0\}.$ One of the $\alpha_1, \beta_1$ is nonzero. Suppose that $\alpha_1 \neq 0,$ the case $\beta_1 \neq 0$ is completely analogous.

Suppose that $\alpha_1 + \beta_1 + \alpha_2 \in \Lambda^T.$ Then it is easy to see that the set $\{\alpha_1,\beta_1,\alpha_2,\beta_2,\mp\beta_3\Phi^k\}$ is a connection from $\alpha_1$ to $\alpha_3,$ and that the set $\{\alpha_1,\beta_1,-(\alpha_1 + \beta_1 + \alpha_2)\}$ is a connection from $\alpha_1$ to $\alpha_2.$

Now, if $\alpha_1 + \beta_1 + \alpha_2 = 0,$ then $\alpha_1 + \beta_2 = -\alpha_2 \in \Lambda^T \cap \Lambda^{\mathfrak{L}}$ and $\{\alpha_1,\beta_1,0\}$ is a connection from $\alpha_1$ to $\alpha_2.$ Moreover, $\beta_2 = \pm(\alpha_3 + \beta_3)\Phi^k = \pm\gamma_3\Phi^k \in \Lambda^T\cap\Lambda^{\mathfrak{L}}$ and $\{\beta_2,0,\mp\beta_3\Phi^k\}$ is a connection from $\beta_2$ to $\alpha_3.$ Therefore, all nonzero roots $\alpha_i, \beta_i$ are connected and $\gamma_1 \approx \gamma_2 \approx \gamma_3$.

\end{proof}

It would also be interesting to express the relation $\approx$ in terms of strings of roots in $\lambda^T, \Lambda^{\mathfrak{L}}$, such as in the definition \ref{L_connect}, but as of now the authors do not now a way to do so.

{\bf Acknowledgment.}
The authors would like to thank Prof. Dr. Antonio Jes\'us Calder\'on Mart\'in (University of C\'adiz, Spain) for many useful coments and discussions about the paper.



\begin{thebibliography}{99}


\bibitem{Ab} \textsc{Abdykassymova, S., Dzhumaldil'daev, A.}
Leibniz algebras in characteristic $p,$
C. R. Acad. Sci. Paris
S\'er. I Math., 332 (2001), 12, 1047--1052.

\bibitem{sphomliesu} \textsc{Albuquerque, H.; Barreiro, E.; Calder\'on, A. J.;  S\'anchez J.M.}
On split regular $Hom$-Lie superalgebras,
Journal of Geometry and Physics, 128 (2018), 1--11.

\bibitem{AraCM1}
\textsc{Arag\'on Peri\~n\'an, M. J.; Calder\'on Mart\'in, A. J.} Split regular $Hom$-Lie algebras,
J. Lie Theory, 25 (2015), 3, 875--888.

\bibitem{AraCM2}
\textsc{Arag\'on Peri\~n\'an, M. J.; Calder\'on Mart\'in, A. J.}  The structure of split regular $Hom$-Poisson algebras,
Colloq. Math., 145 (2016),  1, 1--13.

\bibitem{[6]}
\textsc{Atagema, H., Makhlouf, A., Silvestrov, S.}  Generalization of $n$-ary Nambu algebras and beyond,
J. Math. Phys., 50 (2009), 083501.


\bibitem{[1]} \textsc{Azcarraga, J.A.,   Izquierdo, J.M.} $n$-Ary algebras: A review with applications, J. Phys. A, 43 (2010),
1293001.


\bibitem{[2]}   \textsc{Bagger, J.,   Lambert, N.}
Comments on Multiple $M2$-branes, J. High Energy Phys., 105 (2008),  2, 15 pp.


\bibitem{bak}
\textsc{Baklouti, A.}
Quadratic $Hom$-Lie triple systems,
Journal of Geometry and Physics, 121 (2017), 166--175.

\bibitem{BKP}
\textsc{Beites, P., Kaygorodov, I., Popov, Yu.}
Generalized derivations of multiplicative $n$-ary $Hom$-$\Omega$ color algebras,
Bulletin of the Malaysian Mathematical Sciences Society, 42 (2019), DOI: 10.1007/s40840-017-0486-8

\bibitem{AJ2} \textsc{Calder{\'o}n, A.J.}
On simple split Lie triple systems,
Algebra and Represention Theory, { 12} (2009), 401--415.


\bibitem{Twisted} \textsc{Calder{\'o}n, A.J., Forero, M.}
Split twisted inner derivation triple systems,
Comm. Algebra., 38 (2010), 1, 28--45.



\bibitem{AJF2} \textsc{Calder{\'o}n, A.J.,  Forero, M.}
Split $3$-Lie algebras,
J. Math. Phys., 52  (2011), 12, 123503, 16 pp.

\bibitem{AJC} \textsc{Calder{\'o}n, A.J., S\'anchez, J.M.} On split Leibniz algebras,
Linear Algebra Appl., { 436} (2012), 6, 1651--1663.


\bibitem{AJCS1} \textsc{Calder{\'o}n, A.J., S\'anchez, J.M.}
On split Leibniz superalgebras, Linear Algebra Appl., 438 (2013), 12, 4709--4725.

\bibitem{AJCSmod} \textsc{Calder{\'o}n, A.J., S\'anchez, J.M.}
Weight modules over split Lie algebras,
Modern Physics Letters A, 28 (2013), 5, 1350008

\bibitem{AJCS2} \textsc{Calder{\'o}n, A.J., S\'anchez, J.M.}
 The structure of split regular $BiHom$-Lie algebras,
 Journal of Geometry and Physics, 110 (2016), 296--305.



\bibitem{3L} \textsc{Calder{\'o}n, A.J., S\'anchez-Ortega, J.}
Split $3$-Leibniz algebras,
Journal of Geometry and Physics 116, (2017), 204--215.


\bibitem{ck10}   \textsc{Cantarini, N., Kac, V.}
Classification of simple linearly compact $n$-Lie superalgebras,
Communications in Mathematical Physics, 298 (2010), 3, 833--853.


\bibitem{ck11}   \textsc{Cantarini, N., Kac, V.}
Classification of linearly compact simple algebraic $N=6$ $3$-algebras,
Transform. Groups, 16 (2011), 3, 649--671.

\bibitem{ck14}   \textsc{Cantarini, N., Kac, V.}
Algebraic vs physical $N=6$ $3$-algebras,
J. Math. Phys., 55 (2014), 1, 011704, 19 pp.

\bibitem{Cao1} \textsc{Cao, Y., Chen, L.} On the structure of split Leibniz triple systems,
Acta Math. Sin. (Engl. Ser.), 31 (2015), 10, 1629--1644.


\bibitem{Cao2} \textsc{Cao, Y., Chen, L.} On split Leibniz triple systems,
Journal Korean Mathematical Society, 54 (2017), 4, 1265--1279.

\bibitem{Cao3}
\textsc{Cao, Y., Chen, L.}
On split regular $Hom$-Lie color algebras, Colloq. Math., 146 (2017), 1, 143--155.


\bibitem{Cao4}
\textsc{Cao, Y., Chen, L., Sum, B.}
On split regular $Hom$-Leibniz algebras,
Journal of Algebra and Its Applications, 
17  (2018), 10, 1850185.




\bibitem{casas} \textsc{Casas, J.M., Loday, J.L., Pirashvili, T.} Leibniz $n$-algebras, Forum Math., 14 (2002), 2, 189--207.

\bibitem{casas2}
\textsc{Casas, J.M.,  Insua, M., Ladra M.}
Poincar\'e-Birkhoff-Witt theorem for Leibniz $n$-algebras,
J. Symbolic Comput., 42 (2007), 11-12, 1052--1065.

\bibitem{CM}  \textsc{de la Concepci\'on D., Makhlouf A.} Variety of $Hom$-Sabinin algebras and related algebra subclasses, arXiv:1512.04173

\bibitem{hom}
\textsc{Hartwig, J.,  Larsson, D., Silvestrov, S.}
Deformations of Lie algebras using $\sigma$-derivations,
J. Algebra, 295 (2006), 2, 314--361.


\bibitem{kg}
 \textsc{Kaygorodov, I.}
On $(n+1)$-ary derivations of simple $n$-ary Malcev algebras, St. Petersburg Math. J. 25 (2014), 4, 575--585.

\bibitem{BCKS}
\textsc{Kaygorodov, I.}
$(n+1)$-ary derivations of semisimple Filippov algebras, Math. Notes, 96 (2014), 1-2, 206--216.

\bibitem{kp}
 \textsc{Kaygorodov, I., Popov, Yu.}
Generalized derivations of (color) $n$-ary algebras,
Linear Multilinear Algebra, 64 (2016), 6, 1086--1106.

\bibitem{kps}
\textsc{Kaygorodov, I.,  Pozhidaev, Y., Saraiva, P.}
On a ternary generalization of Jordan algebras,  Linear and Multilinear Algebra,  67 (2019),  DOI: 10.1080/03081087.2018.1443426

\bibitem{chen2}
\textsc{Liu, Y., Chen, L., Ma, Y.}
Representations and module-extensions of $3$-$Hom$-Lie algebras,
Journal of Geometry and Physics 98 (2015), 376--383.

\bibitem{[3]} \textsc{Gustavsson, A.} One-loop corrections to Bagger-Lambert theory, Nucl. Phys. B, 807 (2009), 315--333.


\bibitem{[20]} \textsc{Makhlouf, A., Silvestrov, S.}
$Hom$-algebras structures, J. Gen. Lie Theory Appl., 2 (2008), 51--64.


\bibitem{poj2}  \textsc{Pozhidaev, A.}
Solvability of finite-dimensional $n$-ary commutative Leibniz algebras of characteristic $0,$
Comm. Algebra, 31 (2003), 1, 197--215.

\bibitem{poj}  \textsc{Pozhidaev, A.} $0$-Dialgebras with bar-unity, Rota-Baxter and $3$-Leibniz algebras,
Groups, rings and group rings, Contemp. Math., 499, Amer. Math. Soc., Providence, RI, 2009, 245--256



\bibitem {Neeb}  \textsc{Stumme, N.}  The structure of Locally Finite Split Lie Algebras,
J.  Algebra,     220, (1999), 2, 664--693.

\bibitem{chen}
\textsc{Sun, B., Chen, L.}
Rota-Baxter multiplicative $3$-ary Hom-Nambu-Lie algebras,
Journal of Geometry and Physics  98 (2015), 400--413.




\bibitem {[4]}  \textsc{Takhtajan, L.} On foundation of the generalized Nambu mechanics,
Communications in Mathematical Physics, 160, (1994), 295--316.


\bibitem {vand}  \textsc{Vandermolen, R.}
Universal imbedding of a $Hom$-Lie Triple System,
arXiv:1709.08714


\bibitem {[28]}  \textsc{Yau, D.} $Hom$-algebras and homology, J. Lie Theory, 19 (2009), 409--421.



\bibitem{spbihomliesu} 
\textsc{Zhang J.;  Chen, L.; Zhang C.}
On split regular $BiHom$-Lie superalgebras,
Journal of Geometry and Physics, 128 (2018), 38--47.


\end{thebibliography}
\end{document}